\renewcommand*\backref[1]{\ifx#1\relax \else 
(Return to page #1) \fi} 
\def\mcite[#1]#2{\mbox{\cite[#1]{#2}}} 
\newtheorem{theorem}{Theorem}[section]
\newtheorem{lemma}[theorem]{Lemma}
\newtheorem{conj}[theorem]{Conjecture}
\newtheorem{cor}{Corollary}[theorem]
\theoremstyle{definition}
\newtheorem{claim}{Claim}[theorem]
\newtheorem{definition}[theorem]{Definition}
\newtheorem{obs}[theorem]{Observation}
\newtheorem{remark}[theorem]{Remark}
\setlist[enumerate]{itemindent=0.5\parindent+\labelwidth, 
leftmargin =*, labelindent = 0.5 \parindent}
\newcommand{\itmarab}[1]{\mbox{\rm 
({\it #1}\,\arabic{*}\hspace{0.05em})}}
\newcommand{\eps}{\varepsilon}
\newcommand{\main}{\mathrm{main}}
\newcommand{\buf}{\mathrm{buf}}
\newcommand{\Emb}{\mathrm{Emb}}
\newcommand{\im}{\mathrm{Im}}
\newcommand{\dist}{\mathrm{dist}}
\newcommand{\exth}{\delta_{\rm e}(\Delta)}
\newcommand{\bigO}{\mathcal{O}}
\newcommand{\fail}{\mathrm{fail}}
\DeclareMathOperator{\supp}{Supp}
\let\tilde\widetilde
\newcommand{\oldqed}{}
\def\endofClaim{\hfill\scalebox{.6}{$\Box$}}
\newenvironment{claimproof}[1][Proof]{
  \renewcommand{\oldqed}{\qedsymbol}
  \renewcommand{\qedsymbol}{\endofClaim}
  \begin{proof}[#1]}
  {\end{proof}
  \renewcommand{\qedsymbol}{\oldqed}} 
\title{Robustness of the Sauer--Spencer Theorem}
\author{%
Peter Allen\thanks{Department of Mathematics, The London School of
Economics, Houghton Street, London WC2A 2AE, UK. E-mail: {\tt  p.d.allen@lse.ac.uk}.}
\and
Julia B\"{o}ttcher\thanks{Department of Mathematics, The London School of
Economics, Houghton Street, London WC2A 2AE, UK. E-mail: {\tt  j.boettcher@lse.ac.uk}.}
\and
Yoshiharu Kohayakawa\thanks{Instituto de Matem\'atica e
  Estat\'{\i}stica, Universidade de S\~ao Paulo, Rua do Mat\~ao 1010,
  05508–090 S\~ao Paulo, Brazil.  E-mail: {\tt yoshi@ime.usp.br}.
  Partially supported by FAPESP (2023/03167-5) and CNPq (407970/2023-1, 315258/2023-3).}
\and
Mihir Neve\thanks{Department of Mathematics, The London School of
Economics, Houghton Street, London WC2A 2AE, UK. E-mail: {\tt  m.s.neve@lse.ac.uk}.}
}
\date{}
\begin{document}

\maketitle

\begin{abstract}
We prove a robust version of a graph embedding theorem
of Sauer and Spencer. To state this sparser analogue, we define $G(p)$
to be a random subgraph of $G$ obtained by retaining each edge of $G$
independently with probability $p \in [0,1]$, and let $m_1(H)$ be the
maximum $1$-density of a graph $H$. We show that for any constant
$\Delta$ and $\gamma > 0$, if $G$ is an $n$-vertex host graph with minimum degree
$\delta(G) \geq (1 - 1/2\Delta + \gamma)n$ and $H$ is an $n$-vertex graph with
maximum degree $\Delta(H) \leq \Delta$, then for $p \geq
Cn^{-1/m_1(H)}\log n$, the random subgraph $G(p)$ contains a copy of
$H$ with high probability. Our value for $p$ is optimal up to a
$\log$-factor. 
\vspace{5pt}

In fact, we prove this result for a more general minimum degree 
condition on~$G$, by introducing an \emph{extension
threshold}~$\exth$, such that the above result holds for graphs~$G$
with ${\delta(G) \geq (\exth + \gamma)n}$. We show that
$\exth \leq (2\Delta-1)/2\Delta$, and further conjecture that
$\exth$ equals $\Delta/(\Delta+1)$, which matches the minimum degree condition on~$G$ in the
Bollob\'as--Eldridge--Catlin Conjecture. A main tool in our proof is a vertex-spread version of the blow-up lemma of
Allen, B\"{o}ttcher, H\`{a}n, Kohayakawa, and Person, which we believe
to be of independent interest.
\end{abstract}

\vspace{10pt}
\section{Introduction}

Many of the most fundamental problems in graph theory can be phrased as embedding problems.
A graph~$H$ is said to have an \emph{embedding} into a host
graph~$G$ if there exists an injective map from $V(H)$ to $V(G)$ which
maps every edge of $H$ to some edge in $G$. For such problems, one is
usually interested in finding sufficiency conditions on the host
graph, such as bounds on its minimum degree, that allow for embedding a given family of graphs.

A prominent result of this flavour is Dirac's
theorem~\cite{Dirac_theorem}, which states that any graph on $n\geq3$
vertices with minimum degree at least~$n/2$ contains a Hamilton
cycle. Moving to other spanning subgraphs,
Bollob\'{a}s in 1978~\cite{bollobás1978extremal} conjectured that
if~$G$ is an $n$-vertex graph with minimum degree at least
$(1/2 + \eps)n$, then~$G$ contains any spanning tree of bounded
constant degree~$\Delta$ for any $\eps>0$
and~$n\geq n_0(\eps)$. This was shown to be true by Koml\'{o}s,
S\'{a}rk\"{o}zy, and Szemer\'{e}di in 1995
\cite{Tree_embedding_komlos}. In fact, they later improved their
result to show that it is possible to find any spanning tree of
degree $\mathcal{O}(n/\!\log n)$ under a similar minimum degree condition \cite{spanning_trees_n_by_logn}.

Such embedding results have been studied for a variety of other families of
subgraphs, such as clique-factors~\cite{Hajnal_szemeredi},
$F$-factors~\cite{F_factors}, powers of Hamilton
cycles~\cite{powers_of_hcycles}, and for subgraphs with bounded
degree and sublinear bandwidth~\cite{dense_bandwidth_theorem}. A
common link in all these embedding theorems is that the subgraphs have sublinear bandwidth. However, the question of which minimum degree
enforces the appearance of all spanning subgraphs of a given maximum
degree still remains open. Observe that this class of graphs also
contains expanders, which do not have sublinear bandwidth. Bollob\'as,
Eldridge, and Catlin~\cite{Bollobas_Eldridge_version_conj,
  catlin_version_conj} made the following conjecture. \footnote{ This conjecture still remains open: Kun announced a proof of the conjecture for sufficiently large $n$ in 2009, but so far no manuscript has appeared.}

\begin{conj}[Bollob\'as--Eldridge--Catlin Conjecture]
  \label{conj:BEC} Let $\Delta$ be a positive
  integer. Suppose $G$ is an $n$-vertex graph with minimum degree
  $\delta(G) \geq \tfrac{\Delta}{\Delta+1}n$ and $H$ is an $n$-vertex
  graph with maximum degree $\Delta(H) \leq \Delta$.  Then~$H$ is a
  spanning subgraph of~$G$.
\end{conj}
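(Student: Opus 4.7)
The overall strategy I would pursue is the regularity--plus--blow-up framework that has been successful for many spanning embedding problems. First, apply Szemer\'edi's regularity lemma to~$G$ to obtain an $\eps$-regular partition with reduced graph~$R$. The minimum-degree condition $\delta(G)\geq\tfrac{\Delta}{\Delta+1}n$ passes to $\delta(R)\geq\bigl(\tfrac{\Delta}{\Delta+1}-o(1)\bigr)|V(R)|$, and by the Hajnal--Szemer\'edi theorem this yields a $K_{\Delta+1}$-factor $\{T_1,\ldots,T_t\}$ in~$R$. Boosting each regular pair inside a~$T_j$ to super-regularity by moving a small number of vertices between clusters is routine.

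Next, since $\Delta(H)\leq\Delta$, Brooks' theorem gives a $(\Delta+1)$-colouring of~$H$. I would try to partition~$V(H)$ consistently with this colouring, assigning each colour-class piece to a cluster of some~$T_j$ so that neighbours in~$H$ are sent to adjacent clusters and the number of vertices sent to each cluster exactly matches its size. A separate application of the blow-up lemma to each~$T_j$ would then complete the embedding.

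The main obstacle is this final \emph{balancing step}. For~$H$ of sublinear bandwidth one can locally recolour and reroute to equalise pre-image sizes, which is the engine behind the Bandwidth Theorem. However, the conjecture allows~$H$ to be an arbitrary bounded-degree graph, including expanders such as a random $\Delta$-regular graph, and for such~$H$ every $(\Delta+1)$-colouring is globally rigid: one cannot shift a small batch of vertices between colour classes without creating conflicts elsewhere, so the local-rebalancing toolkit collapses. Absorption is the natural alternative --- reserve a small absorbing structure in~$G$, embed the bulk of~$H$ via the blow-up lemma, and absorb the leftover --- but the known absorbers are tailored to specific target structures rather than to arbitrary bounded-degree graphs. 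Overcoming this rigidity for expansive~$H$ is where I would expect a proof attempt to stall, and it is the step on which I would be least confident; since the paper only conjectures this statement (and weakens it in its own theorems), I suspect a new idea beyond the current regularity+blow-up+absorption toolbox is needed here.
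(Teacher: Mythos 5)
The statement you were asked about is Conjecture~\ref{conj:BEC}, which the paper does not prove: it is stated as an open conjecture (the paper even notes in a footnote that an announced proof by Kun has never appeared in manuscript form), and the paper's own results only establish the weaker Sauer--Spencer bound $\delta(G)\geq\frac{2\Delta-1}{2\Delta}n$ and its robust analogue. Your proposal is therefore correctly \emph{not} a proof, and you have rightly recognised this. There is nothing in the paper to compare your argument against, so the only meaningful assessment is of your diagnosis of the obstruction, which is accurate: the regularity--blow-up pipeline does reduce the problem to assigning the $(\Delta+1)$-colour classes of $H$ to the clusters of a $K_{\Delta+1}$-factor in the reduced graph so that part sizes match exactly, and for expander-like $H$ (e.g.\ random $\Delta$-regular graphs) there is no sublinear-bandwidth structure to exploit for rebalancing, which is precisely why the Bandwidth Theorem carries a bandwidth hypothesis and why this conjecture remains open. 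One small caveat: the paper's Lemma~\ref{lem:Lemma_H} shows that under the \emph{stronger} minimum degree $(\exth+\gamma)n$ one can sidestep the balancing problem by embedding $H$ into the blow-up $R^*$ of the reduced graph using the extension property itself; this is a useful trick to be aware of, but it is circular as a route to Conjecture~\ref{conj:BEC} since it presupposes the very embedding threshold one is trying to establish. In short, your proposal is an honest and correct account of why the statement is beyond current techniques, not a gap in a proof that should exist.
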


This conjecture is known to be tight, as is shown by a slightly unbalanced
complete $(\Delta+1)$-partite graph, which does not contain a
$K_{\Delta+1}$-factor. There have been several partial results towards
the resolution of this conjecture. For instance, this conjecture is
known to be true for $\Delta \leq 2$ \cite{BEC_Delta2,BEC_Delta22}; for
the case where $\Delta = 3$ and $n$ is large~\cite{BEC_Delta3}; and
when~$H$ is a bipartite graph~\cite{BEC_bipartite}.  In a major improvement, Kaul, Kostochka, and Yu \cite{Kaul_kostocha_yu} showed that if the graph~$H$ had a large maximum degree, then it can be embedded into a graph~$G$ having minimum degree $\delta(G) \geq (1 - 3/5(\Delta+1))n$. 

To this date,
however, the best known bound on the minimum degree that enforces all
spanning subgraphs with maximum degree $\Delta$ is given by the
following old and well-known theorem of Sauer and
Spencer~\cite{Sauer_spencer}.

\begin{theorem}[Sauer--Spencer Theorem]
  \label{thm:Sauer_Spencer} Let $\Delta > 0$ be given. Suppose that $G$ is an $n$-vertex graph with minimum degree
  $\delta(G) \geq \frac{2\Delta-1}{2\Delta}n$ and $H$ is an
  $n$-vertex graph with maximum degree~$\Delta(H) \leq \Delta$. Then
  $H$ is a spanning subgraph of $G$.
\end{theorem}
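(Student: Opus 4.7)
My plan is to reduce the theorem to a packing statement and then prove it by an extremal exchange argument. Setting $J := \overline{G}$, embedding $H$ into $G$ is equivalent to finding a bijection $\phi:V(H) \to V(J)$ with $\phi(x)\phi(y) \notin E(J)$ for every $xy \in E(H)$, that is, to packing $H$ and $J$ on a common vertex set. The hypothesis $\delta(G) \geq \tfrac{2\Delta-1}{2\Delta}n$ translates into the degree bound $\Delta(J) \leq \tfrac{n}{2\Delta}-1$, so that $2\Delta(H)\Delta(J) \leq n - 2\Delta$.

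I would choose a bijection $\phi$ minimising the number of \emph{collision edges}, namely those $xy \in E(H)$ with $\phi(x)\phi(y) \in E(J)$, and argue that the minimum must be $0$. Suppose otherwise; fix any collision $xy$. The plan is to find $z \in V(H)$ such that the bijection $\phi'$ obtained by swapping the images of $x$ and $z$ has strictly fewer collisions, contradicting the extremal choice of $\phi$.

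Inspecting the change in the collision count under the swap shows that it suffices for $z \notin \{x,y\}$ to satisfy: \textbf{(i)} $\phi(z)\phi(y) \notin E(J)$, so that the collision $xy$ is cleared; \textbf{(ii)} $\phi(z)\phi(w) \notin E(J)$ for every $w \in N_H(x)\setminus\{y\}$, so that no new collisions appear at $H$-edges incident to $x$; and \textbf{(iii)} $\phi(x)\phi(w) \notin E(J)$ for every $w \in N_H(z)\setminus\{x\}$, so that no new collisions appear at $H$-edges incident to $z$. The potential $H$-edge $xz$ contributes nothing, as the swap merely permutes its two endpoints. A union bound now gives the required conclusion: (i) rules out at most $\Delta(J)$ choices of $z$ (those with $\phi(z) \in N_J(\phi(y))$); (ii) rules out at most $(\Delta(H)-1)\Delta(J)$ (choose $w \in N_H(x)\setminus\{y\}$, then $\phi(z) \in N_J(\phi(w))$); and (iii) rules out at most $\Delta(H)\Delta(J)$, by counting pairs $(w,z)$ with $\phi(w) \in N_J(\phi(x))$ and $z \in N_H(w)$ — at most $\Delta(J)$ choices for $\phi(w)$ followed by at most $\Delta(H)$ choices for $z$. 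Together with the two excluded values $z \in \{x,y\}$, the total number of forbidden $z$ is at most $2 + 2\Delta(H)\Delta(J) \leq n - 2(\Delta-1)$, which is strictly less than $n$ for $\Delta \geq 2$, so a good $z$ exists. The boundary case $\Delta = 1$ is elementary: $H$ is a matching and $G$ with $\delta(G) \geq n/2$ admits a Hamilton cycle by Dirac's theorem, from which a matching embedding of $H$ is immediate.

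The main obstacle is isolating the right sufficient conditions (i)--(iii) for a profitable swap and executing the pair-counting for (iii): one must route through the auxiliary vertex $w$ instead of enumerating $z$ directly, otherwise the product $\Delta(H)\Delta(J)$ does not appear on the right-hand side. Once these ingredients are in place, the desired contradiction follows immediately from the minimality of $\phi$.
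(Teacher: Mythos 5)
Your proof is correct and is essentially the standard vertex-switching argument: the paper does not reprove Theorem~\ref{thm:Sauer_Spencer} itself (it cites Sauer--Spencer), but its proof of the stronger extension version, Lemma~\ref{lem:extender_sauer_spencer}, is the same swap argument, merely phrased directly in $G$ (maximising mapped edges and counting common neighbours of $\varphi(N_H(x))$) rather than in the complement as a packing statement with a union bound over your conditions (i)--(iii). The two bookkeeping schemes are equivalent, and your separate treatment of $\Delta=1$ correctly patches the only case where the count $2+2\Delta(H)\Delta(J)<n$ fails.
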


In general, embedding theorems such as the ones mentioned so far allow
for embedding subgraphs into dense graphs, that is, graph
having~$\Theta(n^2)$ edges. A popular line of research in the past few
decades has been to establish sparse analogues of such results. Roughly, the idea is to use the high minimum degree host
graph from a Dirac-type embedding theorem as a template and ask whether a sparse
spanning subgraph of this host graph will inherit its
subgraph embedding properties.
This is captured by the notion of robustness, which was proposed
by Krivelevich, Lee, and Sudakov~\cite{robust_hamiltonicity} in their
study of Hamiltonicity. 

A robustness result examines
how strongly a graph satisfies a property. To make this more precise, given a
graph~$G$ and $p\in[0,1]$, let $G(p)$ be the random subgraph
of $G$ obtained by retaining edges of $G$ independently with
probability~$p$. Further, we say that a graph property holds \emph{with high probability} if the probability of this event tends to $1$ as the size of the graph, $|G| = n$, grows to infinity.  
Suppose the graph~$G$ satisfies some
property~$\Pi$. Then this property~$\Pi$ is said to hold
\emph{robustly} for~$G$ and for some $p < 1$, if the random subgraph $G(p)$ satisfies the
property~$\Pi$ with high probability. For instance,
Krivelevich, Lee, and Sudakov~\cite{robust_hamiltonicity} proved that
Hamiltonicity is robust for Dirac graphs. They showed that if $G$ is
an $n$-vertex graph with minimum degree $n/2$ and
$p\ge Cn^{-1}\log n$, then with high probability, the random subgraph~$G(p)$ is 
Hamiltonian as well.

One motivation for establishing robustness results is that they immediately imply counting results on the number of embeddings in the dense host graph.
In the case of robust embedding theorems with
tight minimum degree conditions, these counting results reveal a discontinuous behaviour of the number of copies of the
subgraph in question. For indeed, below this minimum degree
threshold (say $n/2$ for Hamiltonicity), the host graph might not have
any copy of the fixed subgraph, but as soon as the minimum degree
condition is met, there is a sudden emergence of many copies of
this subgraph in the host graph.

Robustness results have been previously studied for some graph and hypergraph
properties. For instance, robust versions are known for the
Hajnal--Szemer\'edi theorem for containing clique
factors~\cite{robust_corradi_hajnal, toolkit_robust_thresholds}; for
the Dirac threshold for containing bounded degree
trees~\cite{bastide2024randomembeddingsboundeddegree, toolkit_robust_thresholds}; for containment of powers of Hamilton cycles~\cite{joos2024robusthamiltonicity}; for embedding subgraphs with bounded maximum degree and sublinear bandwidth~\cite{sparse_blowup}; and for containment of
Hamilton $\ell$-cycles~\cite{Alp_robustness_spread} and perfect
matchings~\cite{robust_pmatch_hyperG, toolkit_robust_thresholds} in
hypergraphs. While only a few results on robustness, as defined
above, are known, we refer the interested reader to a survey by
Sudakov~\cite{Robust_survey} on related questions of a similar
flavour.

\subsection{Our Results}

Our main result is the following robust version of the
Sauer--Spencer theorem (Theo\-rem~\ref{thm:Sauer_Spencer}). The value
of $p$ that we shall be working with is given by the \emph{maximum
  $1$-density}~$m_1(H)$ of the graph~$H$ we are embedding, which is
defined as follows. Let $d_1(H)\coloneqq e(H)/(v(H) -1)$ denote the
\emph{$1$-density} of $H$, and let
$m_1(H)\coloneqq \max_{H'\subseteq H}d_1(H')$, where the maximum runs over all
subgraphs of $H$ with at least two vertices.

\begin{theorem}[Robust Sauer--Spencer Theorem]
    \label{thm:Robust_main_basic}
    For all\/ $\gamma > 0$ and\/ $\Delta \in \mathbb{N}$, there is a
    constant\/ $C > 0$ such that if\/ $H$ is an\/ $n$-vertex graph
    with maximum degree\/ $\Delta(H) \leq \Delta$ and\/ $G$ is an\/
    $n$-vertex graph with minimum degree\/
    $\delta(G) \geq \bigl( \tfrac{2\Delta-1}{2\Delta}+ \gamma\bigr) n$,
    then for\/ $p \geq C n^{-1/m_1(H)}\log n$, with high probability, the graph\/ $H$ is
    a subgraph of~$G(p)$.
\end{theorem}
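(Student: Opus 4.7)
The plan is to combine the Szemer\'edi regularity method with the vertex-spread blow-up lemma of Allen, B\"ottcher, H\`an, Kohayakawa, and Person, followed by a standard spread-to-threshold sprinkling argument. First, I apply the regularity lemma to $G$ with parameters $\eps \ll d \ll \gamma$, producing an $\eps$-regular partition $V(G) = V_0 \cup V_1 \cup \cdots \cup V_t$ with clusters of nearly equal size $m \approx n/t$. After the standard cleaning (discarding edges within clusters, in irregular pairs, and in pairs of density below~$d$), the reduced graph $R$ on $[t]$ inherits minimum degree $\delta(R) \geq \bigl(\tfrac{2\Delta-1}{2\Delta} + \tfrac{\gamma}{2}\bigr)t$.

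Next, I construct an assignment $\phi \colon V(H) \to [t]$ such that (i) $\phi$ is a graph homomorphism from $H$ to $R$, and (ii) the preimage sizes satisfy $|\phi^{-1}(i)| \leq (1-\mu)|V_i|$ for some small buffer $\mu>0$. This is the combinatorial heart of the argument. Since $\Delta(H) \leq \Delta$, the graph $H$ is $(\Delta+1)$-chromatic, but finding a \emph{balanced} homomorphism to $R$ requires more than simply a $K_{\Delta+1}$-factor of~$R$. The extension threshold~$\exth$ is introduced precisely to capture the density above which an iterative extension scheme succeeds in producing such a~$\phi$; by transplanting Sauer and Spencer's swap argument into the reduced-graph setting, one shows $\exth \leq (2\Delta-1)/(2\Delta)$, which is where the Sauer--Spencer hypothesis genuinely enters the proof.

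With $\phi$ in hand, I invoke the vertex-spread blow-up lemma, which outputs a random embedding $\psi \colon V(H) \to V(G)$ respecting $\phi$ and enjoying the vertex-spread property: for every $k \geq 1$, every $(u_1,\dots,u_k) \in V(H)^k$, and every $(v_1,\dots,v_k) \in V(G)^k$,
\[
\Pr\bigl[\psi(u_i) = v_i \text{ for all } i \in [k]\bigr] \leq (q/n)^k
\]
for some $q = q(\eps, d, \Delta)$. A standard counting translation turns this vertex-spread statement into an edge-spread statement on the random edge set $\psi(E(H)) \subseteq E(G)$ with spread constant of order $n^{-1/m_1(H)}$, the denominator reflecting the densest subgraph of~$H$. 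Applying the Frankston--Kahn--Narayanan--Park theorem (or its Park--Pham refinement) then yields that $G(p)$ contains a copy of $\psi(H)$, and hence a copy of~$H$, with high probability whenever $p \geq C n^{-1/m_1(H)} \log n$.

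The principal obstacle is the construction of $\phi$ in the second step. Sauer and Spencer's original proof uses global vertex swaps that are incompatible with a fixed cluster partition, so the argument must be recast as an extension-by-one-vertex statement in~$R$: given a partial homomorphism on $V(H) \setminus \{u\}$, there must remain a cluster $j \in V(R)$ compatible with the adjacency constraints imposed by the neighbours of~$u$ and with the remaining capacity in~$V_j$. Packaging this extension property as the threshold $\exth$ and bounding $\exth \leq (2\Delta-1)/(2\Delta)$ is the main novel combinatorial input; once this is established, everything else reduces to an application of existing machinery.
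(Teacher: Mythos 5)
Your overall pipeline --- regularity, a balanced homomorphism from $H$ to the reduced graph, the vertex-spread blow-up lemma, the vertex-spread-to-edge-spread conversion, and finally Frankston--Kahn--Narayanan--Park --- is exactly the architecture of the paper's proof. The genuine gaps are all concentrated in your second step, the construction of $\phi$.

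First, your balancedness condition $|\phi^{-1}(i)|\le(1-\mu)|V_i|$ cannot hold for a spanning subgraph: summing over $i$ would give $n\le(1-\mu)n$. The blow-up lemma needs \emph{exact} size-compatibility $|\phi^{-1}(i)|=|V_i|$; the ``buffer'' is not unused capacity in $G$ but a reserved set of vertices of $H$ that the lemma embeds last via Hall's theorem. Relatedly, you never arrange the structure the blow-up lemma actually demands: a bounded-degree spanning subgraph $R'\subseteq R$ on which the partition is \emph{super}-regular (the paper takes $R'$ to be a $K_{\Delta+1}$-factor of $R$, available by Hajnal--Szemer\'edi since $\delta(R)\ge\tfrac{\Delta}{\Delta+1}|R|$), together with an $(\alpha,R')$-buffer, i.e.\ linearly many vertices in each part whose first and second $H$-neighbourhoods go along $R'$. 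Securing this forces one to pre-place the second neighbourhoods of well-separated candidate buffer vertices into the clique components of $R'$ \emph{before} the rest of $H$ is assigned. Second, your account of how Sauer--Spencer enters is off. A one-vertex-at-a-time extension of a partial homomorphism in $R$ can get stuck on capacity constraints, and repairing such bad assignments a posteriori is precisely what the swap argument is for --- so recasting it as greedy ``extension-by-one-vertex'' discards the mechanism that makes it work. The paper's actual move is to form the blow-up $R^*$ of $R$ with part sizes $|V_i|$, an $n$-vertex graph with minimum degree at least $\bigl(\tfrac{2\Delta-1}{2\Delta}+\gamma'\bigr)n$, and to run the global Sauer--Spencer swap argument there, strengthened so that it extends a given partial embedding on a small vertex set (Lemma~\ref{lem:extender_sauer_spencer}); embedding $H$ into $R^*$ extending the pre-placed buffer neighbourhoods is exactly the desired size-compatible $R$-partition. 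This is the one place the hypothesis $\delta(G)\ge\bigl(\tfrac{2\Delta-1}{2\Delta}+\gamma\bigr)n$ (equivalently $\exth\le\tfrac{2\Delta-1}{2\Delta}$) is used, and your reformulation of it inside $R$ itself would need a separate, nontrivial justification.
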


We remark that, technically, this theorem concerns a sequence $H=(H_n)_{n\in\mathbb{N}}$ of graphs, but to simplify notation
we just write $H$, as is standard in the area.

Our value for $p$ in Theorem~\ref{thm:Robust_main_basic} is optimal up
to the $\log$ factor (see Section~\ref{sec:conclusion} for details). For the proof of
Theorem~\ref{thm:Robust_main_basic} we prove and use a stronger
version of Theorem~\ref{thm:Sauer_Spencer}, where we show that, under similar conditions, it is
possible to obtain an embedding of $H$ into $G$ by \emph{extending} any
given partial embedding of a few vertices of $H$ into $G$ (see Lemma~\ref{lem:extender_sauer_spencer}). In fact, as
this ability to extend is the only r\^{o}le of Theorem~\ref{thm:Sauer_Spencer} in our proof,
we are able to prove a more general version of
Theorem~\ref{thm:Robust_main_basic}. The
following definition captures the above extension property.

\begin{definition}[Extension Threshold]
\label{defn:extension_threshold}
For all $\Delta \in \mathbb{N}$, let the \emph{extension threshold}
$\exth$ be the smallest real number~$\delta_{\rm e}$ such that the
following holds.
For every $\gamma>0$ there exists $\eta>0$ such that for all
sufficiently large~$n$, we have that if~$G$ and~$H$ are $n$-vertex graphs with
$\delta(G)\ge(\delta_{\rm e}+\gamma)n$ and with $\Delta(H)\le\Delta$, and if
$S\subset V(H)$ is a subset of size $|S|\le\eta n$ given with an embedding $\varphi_S: S\to V(G)$ of $H[S]$ into $G$, then there
is an embedding $\varphi: V(H)\to V(G)$ of~$H$ into~$G$ that
extends~$\varphi_S$. 
\end{definition}

The tightness of Conjecture~\ref{conj:BEC} implies that
$\exth\ge \Delta/(\Delta+1)$.  On the other hand, our stronger
version of the Sauer--Spencer theorem (Lemma~\ref{lem:extender_sauer_spencer}) establishes the
upper bound~$\exth\le (2\Delta-1)/2\Delta$. We prove the
following result, which immediately implies
Theorem~\ref{thm:Robust_main_basic}.

\begin{theorem}[Main Technical Theorem]
    \label{thm:Robust_main}
    For all\/ $\gamma > 0$ and\/ $\Delta \in \mathbb{N}$, there exists
    a constant\/ $C > 0$ such that if\/ $H$ is an\/ $n$-vertex graph
    with maximum degree\/ $\Delta(H) \leq \Delta$ and\/ $G$ is an\/
    $n$-vertex graph with minimum degree\/
    $\delta(G) \geq (\exth+ \gamma) n$, then for\/
    $p \geq C n^{-1/m_1(H)}\log n$, with high probability, the graph\/~$H$ is contained in the random subgraph~$G(p)$ of $G$.
\end{theorem}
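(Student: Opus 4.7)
The plan is to combine the vertex-spread version of the blow-up lemma mentioned in the abstract (due to Allen, B\"ottcher, H\`an, Kohayakawa, and Person) with the Park--Pham / Frankston--Kahn--Narayanan--Park spread lemma. The goal is to construct a probability distribution on embeddings $\varphi: V(H) \to V(G)$ whose induced edge-sets $\varphi(E(H)) \subseteq E(G)$ form a $q$-spread distribution on copies of $H$ in $G$ with $q = \Theta(n^{-1/m_1(H)})$. The spread lemma then delivers a copy of $H$ in $G(p)$ with high probability once $p \geq Cq\log n$, which is precisely our bound.

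To build this distribution, I would begin by applying Szemer\'edi's regularity lemma to $G$, obtaining an equitable partition $V(G)=V_0\cup V_1\cup\cdots\cup V_k$ whose reduced graph $R$ inherits the minimum degree condition $\delta(R) \geq (\exth+\gamma/2)k$. Next, using the structure of $R$, I would partition $V(H)$ into parts $X_1,\dots,X_k$ with $|X_i|=|V_i|$, designed so that apart from a small buffer $S \subseteq V(H)$ of size $|S|\leq \eta n$, every $H$-edge crosses only between parts $X_i,X_j$ with $ij\in E(R)$. The buffer $S$ collects vertices whose neighbourhoods are not compatible with this partition; these are pre-embedded into $G$ via the extension property from Definition~\ref{defn:extension_threshold}, which is applicable because $\delta(G)\geq(\exth+\gamma)n$. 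After fixing this pre-embedding $\varphi_S: S\to V(G)$, the remaining task is a super-regular blow-up problem, to which I would apply the vertex-spread blow-up lemma. This produces a distribution $\mu$ on embeddings $\varphi$ extending $\varphi_S$ that is $(C_1/n)$-vertex-spread, i.e., $\mu(\{\varphi:\varphi|_T=\psi\}) \leq (C_1/n)^{|T|}$ for every partial injection $\psi:T\to V(G)$.

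Translating vertex-spread to an edge-spread statement on copies of $H$ is a standard calculation using the $\Delta$-boundedness of $H$: for $F\subseteq E(G)$ with $v$ vertices and $f$ edges, the probability $\mathbb{P}_{\varphi\sim\mu}[F\subseteq\varphi(E(H))]$ is bounded by the number of labelled copies of the graph $(V(F),F)$ inside $H$ (at most $n\cdot\Delta^{v-1}$) times the vertex-spread bound $(C_1/n)^v$; since any such $F$ must embed into $H$, the definition of $m_1(H)$ yields $f/(v-1)\leq m_1(H)$, and the bound collapses to $q^{f}$ with $q=O(n^{-1/m_1(H)})$. The Park--Pham spread lemma applied to the induced distribution on copies of $H$ then finishes the proof.

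The main obstacle will be the interplay between the extension step and the vertex-spread blow-up. The buffer $S$ must be chosen so that pre-embedding it via the extension threshold leaves a clean super-regular problem on the remaining clusters, while simultaneously the reservation of vertices for $\varphi_S$ must not destroy vertex-spread on $V(H)\setminus S$. Controlling these two components in tandem --- in particular, randomising $\varphi_S$ itself so that spread is maintained (or restored) on the buffer, and ensuring each cluster retains enough uncommitted vertices for the blow-up lemma to apply with the required quantitative spread parameter --- is the technical heart of the argument.
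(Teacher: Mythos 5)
Your overall architecture (regularity lemma, reduced graph with inherited minimum degree, vertex-spread blow-up lemma, the vertex-spread-to-edge-spread translation via $\Delta$-boundedness and $m_1(H)$, then the Frankston--Kahn--Narayanan--Park theorem) matches the paper's proof, and your edge-spread calculation is essentially Theorem~\ref{thm:vtx_spread_implies_spread} (modulo handling disconnected $F$ with $c$ components, where one needs $n^c\Delta^{v-c}$ rather than $n\Delta^{v-1}$ and the bound $f\le m_1(H)(v-c)$ --- a minor fix). The genuine gap is in your use of the extension property. You propose to pre-embed a buffer $S\subseteq V(H)$ \emph{into $G$} via Definition~\ref{defn:extension_threshold} and then run the spread blow-up lemma on $V(H)\setminus S$. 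This cannot work as stated: once $\varphi_S$ is fixed, any event $\{\varphi(x)=\varphi_S(x)\}$ for $x\in S$ has probability $1$, so the resulting measure on $\Emb(H,G)$ is not $O(1/n)$-vertex-spread, and consequently the induced edge-measure is not $O(n^{-1/m_1(H)})$-spread on any edge of $H[S]$ or edge incident to $S$. You correctly identify that $\varphi_S$ would have to be randomised in a spread way, but the extension threshold gives only the \emph{existence} of one extension of one partial embedding; it provides no distributional control whatsoever, and no tool in your proposal supplies the missing $\Omega(n)$ many well-spread choices for each image $\varphi_S(x)$. So the ``technical heart'' you defer is not a technicality --- it is an unresolved obstruction to your plan.

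The paper avoids this entirely by never pre-embedding any vertex of $H$ into $G$. The extension property is applied in an auxiliary graph: letting $R^*$ be the blow-up of the reduced graph $R$ with respect to $i\mapsto|V_i|$, one first places the potential buffer vertices and their second neighbourhoods into $R^*$ by hand (using Hajnal--Szemer\'edi to split each $N^2[x]$ across a clique of $R'$), and then invokes the definition of $\exth$ --- applicable since $\delta(R^*)\ge(\exth+\gamma/2)n$ --- to extend this to a full embedding of $H$ into $R^*$. The preimages of the blown-up clusters define a partition $\mathcal{X}$ of $V(H)$ that is exactly size-compatible with $\mathcal{V}$, is an $R$-partition with \emph{no} exceptional edges, and comes with an $(\alpha,R')$-buffer. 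The entire embedding of $H$ into $G$ is then produced in one shot by the spread blow-up lemma (Theorem~\ref{thm:spread_dense_blowup}), so vertex-spreadness holds on all of $V(H)$ with no conditioning issues. If you reroute your argument so that the extension step happens in $R^*$ rather than in $G$, the rest of your proposal goes through.
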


We believe that if Conjecture~\ref{conj:BEC} holds true, then the value of $\exth$ should match the minimum degree condition in Conjecture~\ref{conj:BEC} and equal $\Delta/(\Delta+1)$. 
On the other hand, as in Theorem~\ref{thm:Robust_main_basic}, our value for~$p$, and hence
the strength of our robustness result, is optimal up to the $\log$
factor.

Further, it is not hard to check that among graphs~$H$ with maximum
degree $\Delta\ge2$, the quantity~$m_1(H)$ is maximised by
$K_{\Delta+1}$ (albeit not uniquely), which has
$m_1(H)=(\Delta+1)/2$. In fact, for all $H$ with $\Delta(H) \leq \Delta$ and
$m_1(H)=(\Delta+1)/2$ we show in Section~\ref{sec:conclusion} (see Theorem~\ref{thm:tight_improvement}) that it is possible to work with a slightly better
probability $p \geq Cn^{-2/(\Delta+1)}(\log n)^{1/\binom{\Delta+1}{2}}$, which
is optimal up to the
constant~\cite{FKNP_Fractional_Thresholds}. 

\subsubsection{Overview of the Proof}

In order to obtain a near-optimal value of $p$ in
Theorem~\ref{thm:Robust_main}, we use the notions of spreadness and
spread measures as introduced by
Talagrand~\cite{talagrand_spread_measure}. In the proof of the fractional version of the Kahn--Kalai conjecture, Frankston,
Kahn, Narayanan, and Park \cite{FKNP_Fractional_Thresholds} established
a relation between the existence of such a spread measure and the threshold
function for a graph property. In the case of our robustness result, this threshold function is the best value of~$p$ that works for Theorem~\ref{thm:Robust_main}.

Thus, as a first step in our proof, we show the existence of a so-called vertex-spread measure, which implies the required spread measure that is sufficient for proving Theorem~\ref{thm:Robust_main}
for the said value of $p$. 
Spread measures have been use before for proving robustness
theorems~\cite{nenadov_pham_spreadblowup, Alp_robustness_spread}. In
fact, in 2023, Pham, Sah, Sawhney, and Simkin
\cite{toolkit_robust_thresholds} proved a series of robustness results
highlighting the use of spreadness in such proofs and various methods
for constructing spread measures. However, in our setup new ideas are needed.

One of our main tools, which we use to construct this vertex-spread measure,
is a spread version of the blow-up lemma of Allen, B\"ottcher, H\`an,
Kohayakawa, and Person~\cite{sparse_blowup}. We believe this result
will be useful to approach other problems as well.
%
%
Our proof of this spread blow-up lemma results from a careful
probabilistic analysis of the key steps used in the proof of the
sparse blow-up lemma in \cite{sparse_blowup}. It should be mentioned that a spread version of the blow-up lemma of
Koml\'os, S\'ark\"ozy, and Szemer\'edi~\cite{Blowup_lemma} was proved
recently by Nenadov and Pham~\cite{nenadov_pham_spreadblowup}, but
their result is not strong enough for our proof of
Theorem~\ref{thm:Robust_main}. The main difference is that their spread blow-up lemma applies only to a bounded-size subgraph of the reduced graph of a regular partition for $G$, while our version applies to the entire reduced graph. 

The final step of our proof involves processing and
partitioning the given graphs $G$ and $H$, so as to satisfy the
sufficient conditions for applying our spread blow-up lemma. We use the extension property, as specified in
Definition~\ref{defn:extension_threshold}, to be able to partition the
graph~$H$ in agreement with the requirements of our spread blow-up
lemma.

\noindent \textbf{Organisation.} The remainder of the paper is organised as follows. We fix notation and collect some preliminary tools in Section~\ref{sec:prelims}. In Section~\ref{sec:Sauer_Spencer_extension} we prove our extension of the Sauer--Spencer theorem. This is followed by an introduction to regularity and the blow-up lemma in Section~\ref{sec:Regularity_dense_SBL}, and to spread and vertex-spread measures in Section~\ref{sec:robust}. In Section~\ref{sec:robust} we also state our spread version of the blow-up lemma, the proof of which is deferred later to Section~\ref{sec:spreadblow}. Theorem~\ref{thm:Robust_main} is proven in Section~\ref{sec:robust_overview} where, modulo two lemmas for the graphs~$G$ and~$H$, we show how Theorem~\ref{thm:Robust_main} follows from the spread blow-up lemma. Then, in Section~\ref{sec:lem_GH}, we show how the given graphs~$G$ and~$H$ can be processed and partitioned to satisfy the requirements of the spread blow-up lemma. Finally, Section~\ref{sec:conclusion} contains concluding remarks.

\section{Preliminaries}
\label{sec:prelims}
All graphs shall be assumed to be simple and finite. Most of the graph-theoretic notation that we use is standard. We shall use $|G|$ to denote the size of the vertex set of $G$. Given two disjoint subset of vertices $A, B \subseteq V(G)$, we use $E_G(A,B) \subseteq E(G)$ to denote the set of edges of $G$ with one end point in $A$ and the other in $B$. For any subset $S \subseteq V(G)$, the subgraph of $G$ induced by the vertex set $S$ shall be denoted by $G[S]$. For any subgraph $H$, an $H$-factor in $G$ is a collection of vertex-disjoint copies of $H$ in $G$, such that each vertex of $V(G)$ lies in exactly one of these copies of $H$. 

In this paper, the graph $G$ will be primarily reserved to denote the host graph, and will be paired with some fixed bounded degree spanning subgraph $H$ that needs to be embedded into $G$. The variable~$n$ will typically denote the size of the vertex sets of $H$ and $G$. For subsets $S, T \subseteq V(G)$, we use $N_G(S; T)$ to denote the neighbours, not necessarily common, of the vertices of $S$ in the graph $G$, that lie in the set $T\setminus S$. For simplicity, we use $N_G(v; T)$ when $S$ is a singleton set $\{v\}$, and $N_G(S)$ when $T = V(G)$. Further, we shall use $N^*_G(S; T)$, and analogously $N^*_G(S)$, to denote common neighbours of the vertices of $S$. The \emph{closed neighbourhood} of $S$, defined by $N(S) \cup S$, shall be denoted by $N[S]$. The subscript $G$ will be dropped in the neighbourhood notations whenever the ambient graph is clear from the context.  

Suppose $H$ and $G$ are two graphs given with a vertex map~$\varphi: V(H) \to V(G)$. Then, for any edge $e = uv$ of $H$, the notation~$\varphi(e)$ shall be used to denote the potential edge~$\varphi(u)\varphi(v)$ on the vertex set~$V(G)$. We shall say that an injective map~$\varphi: V(H) \rightarrow V(G)$ is an \emph{embedding} of $H$ into $G$, if $\varphi$ embeds a copy of $H$ into $G$ (i.e. $\varphi(E(H)) \subseteq E(G)$). Further, we denote by $\Emb(H,G)$, the set of all embeddings of $H$ into $G$. Finally, given a graph $G$, the \emph{blow-up} of $G$ with respect to some function~${f: V(G) \to \mathbb{N}}$ is defined to be a graph~$G^*$ obtained by replacing each vertex $v \in V(G)$ with an independent set of size $f(v)$ and each edge~$uv \in E(G)$ with a complete bipartite graph~$K_{f(u), f(v)}$ having parts of size $f(u)$ and $f(v)$, respectively.

We will require the following theorem of Hajnal and Szemer\'{e}di~\cite{Hajnal_szemeredi}, which allows for partitioning a graph with bounded maximum degree into independent sets of roughly equal sizes. Given an $n$-vertex graph~$H$ and a positive integer~$k$, the graph~$H$ is said to have an \emph{equitable partition} into $k$ sets, if $V(H)$ affords a partition~$V_1 \sqcup \dots \sqcup V_k$ such that $|V_i| = \lfloor n/k \rfloor$ or $\lceil n/k\rceil$ for all $i \in [k]$.

\begin{theorem}[Hajnal--Szemer\'{e}di Theorem]
\label{thm:Hajnal-Szemeredi}
Let $\Delta \in \mathbb{N}$ be given. If\/ $H$ is a graph with maximum degree~$\Delta(H) \leq \Delta$, then\/ $H$ has an equitable partition into\/ $(\Delta + 1)$ independent sets.     
\end{theorem}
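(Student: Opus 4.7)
The plan is to prove this by induction on $n$ using an augmenting-swap argument in the style of Hajnal--Szemer\'edi (and streamlined by Kierstead and Kostochka). Equivalently, via complementation (setting $G := \overline{H}$), the statement is asking for a partition of $V(G)$ into $\Delta+1$ cliques of sizes $\lfloor n/(\Delta+1)\rfloor$ or $\lceil n/(\Delta+1)\rceil$ in a graph $G$ with $\delta(G) \ge n - 1 - \Delta$; I find the direct colouring formulation more convenient to work with.

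First, by a greedy argument $H$ admits a proper $(\Delta+1)$-colouring, so I would start from any proper $(\Delta+1)$-colouring $\mathcal{P} = V_1 \sqcup \dots \sqcup V_{\Delta+1}$ with sizes as balanced as possible. If $\mathcal{P}$ is already equitable we are done. Otherwise there is a ``big'' class $B$ with $|B| \ge \lceil n/(\Delta+1)\rceil + 1$ and a ``small'' class $S$ with $|S| \le \lfloor n/(\Delta+1)\rfloor - 1$. The aim is to transfer one vertex from $B$ to $S$ along a chain of single-vertex reassignments between consecutive classes, each preserving the properness of the colouring, so that the imbalance strictly decreases. I would encode this by an auxiliary digraph on the $\Delta+1$ classes whose arcs record legal single-vertex moves (for instance, an arc from $V_i$ to $V_j$ whenever some $v \in V_i$ has no neighbour in $V_j$), and the key reduction is to show that $S$ is always reachable from $B$ in this digraph.

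The main obstacle is exactly this reachability claim under only the maximum-degree hypothesis. Given an ``unreachable'' set of classes $\mathcal{U}$ containing $B$ but not $S$, one must derive a contradiction by double-counting edges and non-edges between $U := \bigcup_{V_i \in \mathcal{U}} V_i$ and its complement, exploiting both the structural constraint of unreachability (which forces many non-edges) and the bound $\Delta(H) \le \Delta$ (which caps the non-edge count from each vertex in the complement formulation). A clean counting lemma due to Kierstead--Kostochka handles this, and I would invoke it as a black box. Once reachability is established, performing the swaps along any $B$-to-$S$ path strictly decreases the potential $\sum_i \bigl(|V_i| - \lceil n/(\Delta+1)\rceil\bigr)_+$, so after finitely many rounds we arrive at an equitable partition.
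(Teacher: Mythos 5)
The paper does not prove this statement at all: it is the classical Hajnal--Szemer\'edi theorem, quoted from \cite{Hajnal_szemeredi} and used as a black box, so there is no in-paper proof to compare against. Judged on its own terms, your outline identifies the right general machinery (the auxiliary digraph of legal single-vertex moves and the notion of accessibility, as in Kierstead--Kostochka), but it contains a genuine error at the crucial step.

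The claim that the small class $S$ is always reachable from the big class $B$, with unreachability refuted by a double count, is false, and no counting lemma of Kierstead--Kostochka establishes it. If $\mathcal{U}$ is the set of classes that cannot reach $S$ and $k=|\mathcal{U}|$, then unreachability only tells you that every vertex of $U=\bigcup_{V_i\in\mathcal{U}}V_i$ has at least one neighbour in each of the $\Delta+1-k$ classes outside $\mathcal{U}$; combined with $\Delta(H)\le\Delta$ this yields $|U|(\Delta+1-k)\le \Delta(n-|U|)$, i.e.\ $|U|\le \Delta n/(2\Delta+1-k)$, while the size hypothesis gives $|U|\ge kn/(\Delta+1)$ (up to the $+1/-1$). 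These bounds are compatible for every $1\le k\le\Delta$ (the relevant quadratic $k(2\Delta+1-k)\le\Delta(\Delta+1)$ has roots exactly at $k=\Delta$ and $k=\Delta+1$), so there is no contradiction, and indeed non-accessible configurations genuinely occur (e.g.\ $H[U]$ a union of $K_k$'s each meeting every class of $\mathcal{U}$). The entire difficulty of the Hajnal--Szemer\'edi proof lies in this non-accessible case, which Kierstead and Kostochka resolve not by counting but by a further structural argument (``solo'' vertices, a secondary recolouring inside $U$, and an outer induction on $e(H)$). As written, your proof terminates by invoking a lemma that does not exist in the form you describe, so the argument has a real gap rather than a deferrable technicality.
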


Theorem~\ref{thm:Hajnal-Szemeredi} has many useful applications. For instance, given a graph~$H$ with $\Delta(H) \leq \Delta$ and some constant~$C$, we shall use Theorem~\ref{thm:Hajnal-Szemeredi} to obtain an equitable partition of $V(H)$ with parts of size~$\Theta(n)$ and such that for every pair of vertices~$x,y$ within a part, we have $\dist_H(x,y) \geq C$. Further, when applied on the complement graph, Theorem~\ref{thm:Hajnal-Szemeredi} returns a collection of vertex-disjoint cliques $K_r$ that cover almost all vertices of the given graph. This is stated more precisely in the following corollary of the Hajnal--Szemer\'{e}di theorem.

\begin{cor}
    \label{cor:clique_hajnal_szemeredi}
    Let the constant $r \in \mathbb{N}$ be given. If $G$ is an $n$-vertex graph with minimum degree $\delta(G) \geq (1 - 1/r)\,n$, then $G$ contains a $K_r$-factor covering all but at most $r-1$, vertices of $G$. 
\end{cor}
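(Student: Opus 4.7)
The plan is to apply the Hajnal--Szemer\'{e}di theorem (Theorem~\ref{thm:Hajnal-Szemeredi}) to the complement $\bar G$, aiming for an equitable partition into $q := \lfloor n/r \rfloor$ independent sets. Each such set is a clique of $G$, and if each has size at least $r$, then selecting an $r$-subset from each yields $q$ vertex-disjoint copies of $K_r$, covering $qr$ vertices and leaving $n - qr \le r-1$ uncovered, as required.

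Write $n = qr + s$ with $0 \le s \le r-1$. The key step I would verify is the degree hypothesis $\Delta(\bar G) \le q - 1$, which is exactly what is needed to invoke Theorem~\ref{thm:Hajnal-Szemeredi} with $\Delta = q-1$. The naive bound $\Delta(\bar G) \le n/r - 1$ is not quite enough when $r \nmid n$, but integrality closes the gap: since $\delta(G)$ is an integer and $\delta(G) \ge (1-1/r)n$, one has $\delta(G) \ge \lceil (1-1/r)n \rceil = n - q$, and hence $\Delta(\bar G) \le n - 1 - \delta(G) \le q - 1$. (The degenerate case $n < r$ has $q = 0$ and the claim is vacuous since $n \le r-1$.)

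Applying Theorem~\ref{thm:Hajnal-Szemeredi} with this value of $\Delta$ then produces an equitable partition $V_1 \sqcup \cdots \sqcup V_q$ of $V(G)$ into independent sets of $\bar G$, i.e., cliques of $G$, with each $|V_i| \in \{\lfloor n/q\rfloor, \lceil n/q \rceil\}$. Since $\lfloor n/q \rfloor = \lfloor r + s/q \rfloor \ge r$, I may choose an $r$-subset $T_i \subseteq V_i$ from each part, and $T_1, \ldots, T_q$ form the desired $K_r$-factor covering $qr = n - s$ vertices.

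The only subtle point---the main obstacle in the sense that everything else is bookkeeping---is the integrality observation sharpening $\Delta(\bar G) \le n/r - 1$ to $\Delta(\bar G) \le q-1$. Without it, the natural alternative would be to apply Theorem~\ref{thm:Hajnal-Szemeredi} with $q+1$ colour classes (of sizes $r-1$ and $r$), after which one would need to manufacture the missing $K_r$'s by combining vertices from distinct $K_{r-1}$ parts; this approach generally leaves many more than $r-1$ vertices uncovered and does not give the sharp bound in the corollary.
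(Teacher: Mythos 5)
Your proof is correct and is exactly the argument the paper intends (it states the corollary as an application of Theorem~\ref{thm:Hajnal-Szemeredi} to the complement graph without writing out the details). The integrality step $\delta(G)\ge n-\lfloor n/r\rfloor$, hence $\Delta(\bar G)\le \lfloor n/r\rfloor-1$, is the right way to get exactly $\lfloor n/r\rfloor$ colour classes of size at least $r$, and the rest of your bookkeeping checks out.
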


\section{An Extension of the Sauer--Spencer Theorem}
\label{sec:Sauer_Spencer_extension}

In this section, we prove the claimed upper bound on the extension threshold $\exth$.

The graph packing theorem of Sauer and Spencer (Theorem~\ref{thm:Sauer_Spencer}), while requiring a higher minimum degree condition on $G$ than Corollary~\ref{cor:clique_hajnal_szemeredi}, allows for embedding any bounded degree spanning subgraph~$H$ into the host graph~$G$. Its proof, as given in \cite{Sauer_spencer}, relies on a simple but clever vertex switching argument, which algorithmically finds an embedding map of $H$ into $G$. 

More precisely, the algorithm begins with an arbitrarily chosen bijection from~$V(H)$ to~$V(G)$. If this map is not yet an embedding of~$H$ into~$G$, then there is an edge~$e \in E(H)$ whose image is not in~$E(G)$. The degree conditions on~$G$ and~$H$ then facilitate swapping the image of one of the end points of~$e$ with the image of some other vertex of $H$, such that the edge~$e$ gets successfully embedded into an edge of~$G$ without affecting any of the so-far correctly embedded edges of~$H$. Repeating this process a finite number of times returns a desired embedding of~$H$ into~$G$.

We prove a stronger version of Theorem~\ref{thm:Sauer_Spencer}. For this, let $S \subseteq V(H)$ be a small fraction of the vertices of $H$. We show that in the setting of the Sauer--Spencer theorem, if we are additionally given an embedding~$\varphi_S$ of~$H[S]$ into~$G$, that is a partial embedding of the graph~$H$ into~$G$, then the switching argument outlined above can be used to extend~$\varphi_S$ to an embedding of the entire graph~$H$ into~$G$. More precisely, we obtain the following lemma.

\begin{lemma}
    \label{lem:extender_sauer_spencer}
    For all\/ $\Delta \in \mathbb{N}$ and\/ $\gamma \in (0, 1/2\Delta)$, 
    let\/ $G$ be an $n$-vertex graph with\/ $\delta(G) \geq \bigl(\frac{2\Delta-1}{2\Delta} + \gamma \bigr)n$ and let $H$ be an $n$-vertex graph with\/ $\Delta(H) \leq \Delta$. 
    Suppose\/ $S \subseteq V(H)$ is a set of vertices of size\/ $|S| \leq \gamma \Delta n$ and\/ 
    $\varphi_S: S \to V(G)$ is an embedding of\/ $H[S]$ into\/ $G$. 
    Then, there exists an embedding\/ $\varphi \in \Emb(H,G)$ of\/ $H$ into\/ $G$ that extends the given embedding $\varphi_S$. 
\end{lemma}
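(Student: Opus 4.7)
The plan is to follow the classical vertex-switching proof of the Sauer--Spencer theorem (Theorem~\ref{thm:Sauer_Spencer}), with the crucial modification that vertices in $S$ are never moved. First extend $\varphi_S$ arbitrarily to a bijection $\varphi\colon V(H) \to V(G)$, which is possible since $|V(H)| = |V(G)| = n$ and $\varphi_S$ is injective. Define the \emph{badness potential} $\Phi(\varphi) = |\{e \in E(H) \colon \varphi(e) \notin E(G)\}|$, so that $\varphi$ is an embedding if and only if $\Phi(\varphi) = 0$. It then suffices to show that whenever $\Phi(\varphi) > 0$, one can find a swap exchanging $\varphi(u)$ and $\varphi(w)$ for some $u, w \in V(H) \setminus S$ that strictly decreases $\Phi$; since $\Phi$ is a non-negative integer bounded above by $|E(H)| \leq \Delta n / 2$, iterating such swaps terminates in an embedding extending $\varphi_S$.

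For the swap step, fix a bad edge $uv \in E(H)$, that is $\varphi(u)\varphi(v) \notin E(G)$. Since $\varphi_S$ already embeds $H[S]$, the endpoints $u$ and $v$ cannot both lie in $S$; without loss of generality $u \notin S$. I would seek $w \in V(H) \setminus (S \cup \{u\})$ satisfying: (a)~$\varphi(w)\varphi(v) \in E(G)$ and $vw \notin E(H)$; (b)~$\varphi(w)\varphi(u') \in E(G)$ for every $u' \in N_H(u) \setminus \{v, w\}$ with $\varphi(u)\varphi(u') \in E(G)$; and (c)~$\varphi(u)\varphi(w') \in E(G)$ for every $w' \in N_H(w) \setminus \{u, v\}$ with $\varphi(w)\varphi(w') \in E(G)$. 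A direct edge-by-edge computation of the change in $\Phi$ under the swap $\varphi(u) \leftrightarrow \varphi(w)$ confirms that (a)--(c) force $\Phi$ to drop by at least one: the edge $uv$ becomes good, no previously good $H$-edge incident to $u$ or $w$ becomes bad, the condition $vw \notin E(H)$ rules out the only subtle new-bad-edge the swap could create, and the edge $uw$, should it lie in $E(H)$, is unaffected.

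It remains to count bad choices of $w$ and show a good one exists. Among $w \in V(H) \setminus \{u\}$: at most $\Delta - 1$ have $vw \in E(H)$ (as $u \in N_H(v)$); at most $n - \delta(G)$ violate the condition $\varphi(w)\varphi(v) \in E(G)$; at most $(\Delta - 1)(n - \delta(G))$ violate (b), with one summand for each good $H$-neighbour of $u$; and at most $\Delta(n - \delta(G))$ violate (c), by double-counting pairs $(w, w')$ with $\varphi(w') \notin N_G(\varphi(u))$ and $w \in N_H(w')$. Using $n - \delta(G) \leq n/(2\Delta) - \gamma n$, the total number of bad $w$ is at most $n + \Delta - 1 - 2\Delta\gamma n$. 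Excluding the $|S| \leq \gamma\Delta n$ further vertices of $S$ still leaves at least $\gamma\Delta n - \Delta$ valid $w$, which is positive for $n$ sufficiently large.

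The main point requiring care is the edge-by-edge analysis of the change in $\Phi$: one must isolate the edges $uv$, $uw$, and $vw$ whenever they lie in $E(H)$ and handle them individually, which is precisely why the extra condition $vw \notin E(H)$ appears in (a) and why the exclusions $\{v,w\}$ in (b) and $\{u,v\}$ in (c) are necessary. Once that case analysis is pinned down, the counting is tight: the factor-$2\Delta$ slack from the minimum-degree hypothesis on $G$ comfortably absorbs the $|S| \leq \gamma \Delta n$ cost of forbidding the vertices of $S$ from being swapped.
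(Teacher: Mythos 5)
Your proposal is correct and is essentially the paper's argument: both are the classical Sauer--Spencer switching procedure restricted to swaps whose two moved vertices lie outside $S$, with the same $2\Delta(n-\delta(G))$ double-count of blocked swap partners; the paper merely phrases it extremally (take the extension of $\varphi_S$ maximising the number of mapped edges and derive a contradiction) rather than as an iteratively decreasing potential. The one substantive difference is that by imposing $vw\notin E(H)$ as a separate condition in (a) and excluding $v$ from the range of $w'$ in (c), you incur an additive loss of $\Delta-1$, so your count of good $w$ is $\gamma\Delta n-\Delta$ and you need $n>1/\gamma$, whereas the paper folds the case $vw\in E(H)$ with $\varphi(v)\varphi(w)\in E(G)$ into the blocking count (taking $w'=v$ there, since $\varphi(v)\notin N_G(\varphi(u))$) and thus gets a clean $n/2$ versus $(1/2-\gamma\Delta)n$ contradiction valid for every $n$; your argument would match this if you simply allowed $w'=v$ in condition (c) and dropped the separate $vw\notin E(H)$ requirement.
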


In particular, this lemma establishes that the extension threshold satisfies $\exth\leq (2\Delta-1)/2\Delta$.

\begin{proof}
    Given $\Delta \in \mathbb{N}$ and $\gamma \in (0, 1/2\Delta)$, let $S \subseteq V(H)$ and the map~$\varphi_S : S \to V(G)$ be given in accordance with the conditions of the lemma. 
    For ease of notation, set $\Lambda = (2\Delta - 1)/2\Delta$, and let $H'$ and $G'$ denote the graphs $H - S$ and $G - \im(\varphi_S)$, respectively. 
    We consider the set of bijections $$\Phi_S = \bigl\{\varphi : V(H) \to V(G) : \varphi|^{\phantom0}_S = \varphi_S\bigr\},$$ 
    which extend the given map $\varphi_S: S \to \im(\varphi_S)$. Note that the elements of $\Phi_S$ need not embed $H$ into $G$. 
    For any bijection~$\varphi: V(H) \to V(G)$, we shall call an edge~$xy \in E(H)$ to be \emph{unmapped} by $\varphi$ if $\varphi(x)\varphi(y) \notin E(G)$, and \emph{mapped} otherwise. 
    Let~$\varphi \in \Phi_S$ be a bijection that extends $\varphi_S$ and maximises the number of \emph{mapped} edges in $H$. 
    We show that $\varphi$ is an embedding of $H$ into $G$, as required.

    Suppose for a contradiction that $xx^* \in E(H)$ is an edge \emph{unmapped} by $\varphi$.
    As $\varphi$ extends $\varphi_S$, and $\varphi_S$ embeds $H[S]$ into $G$, every edge of $E\bigl(H[S]\bigr)$ is \emph{mapped} by $\varphi$. 
    Thus, we can assume without loss of generality that $x \in V(H')$. 
    Consider the set of vertices $T = \varphi(N_H(x)) \subseteq V(G)$. Note that $|T| \leq \Delta$ as the graph $H$ has maximum degree $\Delta(H) \leq \Delta$.
    Then, as each vertex in $G$ has at most $(1 - \Lambda - \gamma)n$ non-neighbours in $G'$, and as $|V(G')| = n - |S| \geq (1 - \gamma \Delta)n$, we have that
    the number of common neighbours of $T$ in $V(G')$ is at least 
    \begin{eqnarray*}
        \bigl|N^*_G(T; V(G'))\bigr| &\geq& \bigl((1-\gamma\Delta) - \Delta(1 - \Lambda - \gamma)\bigr)n\\
        &=&\bigl(1 - \Delta(1 - \Lambda)\bigr) = \bigl(1 - \Delta\cdot (1/2\Delta)\bigr)n = n/2.
    \end{eqnarray*}
    For any $y \in V(H')$ with $\varphi(y) \in N^*_G(T; V(G'))$, define the bijection $\varphi_y^*: V(H) \to V(G)$ as follows. 
    $$\varphi^*_y(z) = 
    \begin{cases}
        \varphi(y) & \text{if }z = x,\\
        \varphi(x) & \text{if }z = y,\\
        \varphi(z) & \text{otherwise.}
    \end{cases}$$
    \indent Note that $\varphi_y^*$ is obtained from $\varphi$ by swapping the images of $x$ and $y$, where $x,y \in V(H)$. 
    As $\varphi(x), \varphi(y) \notin \im(\varphi_S)$, it follows that the map $\varphi_y^*$ is an element of $\Phi_S$. 
    Further, as $\varphi(y) \in N^*_G(T; V(G'))$, we have that for every neighbour $a \in N_H(x)$ the edge $ax \in E(H)$ is \emph{mapped} by $\varphi_y^*$. More  specifically, the edge $xx^* \in E(H)$ gets \emph{mapped} by $\varphi_y^*$.
    Thus, if it were the case that every edge $yz \in E(H)$ \emph{mapped} by $\varphi$ is also \emph{mapped} by $\varphi_y^*$, 
    then the bijection $\varphi_y^* \in \Phi_S$ would have strictly more \emph{mapped} edges than $\varphi$, thereby contradicting the maximality assumption on $\varphi$. 

    Hence, for every $y \in V(H')$ with $\varphi(y) \in N^*_G(T; V(G'))$ there exists a vertex~$z \in V(H)$ such that the edge~$yz \in E(H)$ is \emph{mapped} by $\varphi$ but not by $\varphi_y^*$.
    Stated otherwise, for each $y \in V(H')$ with $\varphi(y) \in N^*_G(T; V(G'))$, there exists $z \in V(H)$ such that $yz \in E(H)$, $\varphi(y)\varphi(z) \in E(G)$, but $\varphi(x)\varphi(z) \notin E(G)$. 
    However, $\varphi(x)$ has at most $(1 - \Lambda - \gamma)n$ non-neighbours in $G$ which can serve as such a $\varphi(z)$, 
    and each corresponding $z$ has at most $\Delta$ neighbours in $H$ which could serve as $y$. Thus we get the following upper bound on the size of the common neighbourhood of $T$ in $G'$.
    $$\bigl|N^*_G(T; V(G'))\bigr| \leq \Delta \bigl(1 - \Lambda - \gamma\bigr)n = \bigl(1/2 - \gamma \Delta\bigr)n < n/2.$$
    This contradicts the lower bound on $\bigl|N^*_G(T; V(G'))\bigr|$ obtained above and hence, by contradiction, all edges of $E(H)$ must be \emph{mapped} by $\varphi$. 
    Thus, $\varphi$ is the required embedding which extends the given partial embedding $\varphi_S$.
\end{proof}

\section{Regularity and the Blow-up Lemma}
\label{sec:Regularity_dense_SBL}

We now shift our focus to the notion of regularity. Given a graph~$G$ and non-empty disjoint subsets~$A, B \subseteq V(G)$, the \emph{density} of $(A,B)$ in $G$ is defined to be $d(A,B)\coloneqq |E_G(A,B)|/{|A||B|}$. The pair $(A,B)$ is said to be \emph{$\eps$-regular} in $G$ if for every $A'\subseteq A$ and $B'\subseteq B$ with $|A'| \geq \eps|A|$ and $|B'| \geq \eps|B|$, we have that $|d(A',B') - d(A,B)| \leq \eps$. 

Further, an $\eps$-regular pair $(A,B)$ is said to be \emph{$(\eps, d)$-regular} if additionally we have $d(A,B) \geq d-\eps$. We should point out that the latter requirement for $d(A,B)$, while not common, has been chosen to match the blow-up lemma in \cite{sparse_blowup}, where the authors used it to make the so-called regularity inheritance properties much easier to deal with. Some of the theorems stated below (Theorem~\ref{thm:regularity} to Lemma~\ref{lem:robust_regularity}) have been originally proven for the version of the definition requiring $d(A,B) \geq d$ instead of $d-\eps$. However, in all these cases, it is not difficult to see that these statements hold for when $(\eps,d)$-regularity requires $d(A,B) \geq d - \eps$ as well. 

In his celebrated \emph{regularity lemma}, Szemer\'{e}di~\cite{Sz_regularity_OG} showed that the vertex set of any large graph can be equitably partitioned into a bounded number of parts such that most pairs of parts are $\eps$-regular. In this paper, we will use the following equivalent degree-form of Szemer\'{e}di's regularity lemma (see, for example, \mcite[Theorem 1.10]{komlos_regularity_survey}).    

\begin{theorem}[Regularity Lemma]
  \label{thm:regularity}
  For every $\eps>0$ and every integer $m_0$ there is an $M_0=M_0(\eps,m_0)$ such that for every $d\in[0,1]$ and for every graph~$G$ on at least $M_0$ vertices there exist a partition~$V_0 \sqcup V_1\sqcup \dots \sqcup V_m$ of $V(G)$ and a spanning subgraph~$G'$ of $G$ such that the following hold:
  \begin{enumerate}[label=\itmarab{RL}]
  \item \label{itm:RL1} $m_0 \leq m \leq M_0$,
  \item \label{itm:RL2} $d_{G'}(x)>d_G(x)-(d+\eps)|V(G)|$ for all vertices $x\in V(G)$,
  \item \label{itm:RL3}for all $i \geq 1$ the induced subgraph $G'[V_i]$ is empty,
  \item \label{itm:RL4} $|V_0|\leq\eps|V(G)|$ and $|V_1|=|V_2|=\dots=|V_m|$,
  \item \label{itm:RL5} for all $1\leq i<j\leq m$, either 
$(V_i,V_j)$ is $(\eps,d)$-regular or $E_{G'}(V_i,V_j)$ is empty.
  \end{enumerate}
\end{theorem}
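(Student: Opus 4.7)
The plan is to derive this degree-form statement from the classical version of Szemer\'edi's regularity lemma, which guarantees an equitable $\eps'$-regular partition $V_0' \sqcup V_1 \sqcup \dots \sqcup V_m$ with $|V_0'| \le \eps' n$, with $m$ bounded by a constant depending on $\eps'$ and $m_0$, and with at most $\eps' m^2$ pairs $(V_i,V_j)$ failing to be $\eps'$-regular. I would apply that lemma with $\eps' \ll \eps$ (say $\eps' \le (\eps/10)^2$) and with a suitably enlarged lower bound on $m$. The desired $G'$ is then constructed by (i) absorbing a small set of ``bad'' vertices into $V_0$, and (ii) deleting every edge of $G$ that lies inside some part $V_i$, or inside a pair that is not $\eps'$-regular or has density below $d$, or is incident to $V_0$. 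Properties \ref{itm:RL3} and \ref{itm:RL5} then hold by construction (since $\eps'$-regular implies $\eps$-regular and density $\ge d$ implies density $\ge d - \eps$), while \ref{itm:RL1} and \ref{itm:RL4} follow provided I keep $|V_0| \le \eps n$ and redistribute any small surplus to restore equitability of $V_1,\dots,V_m$.

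The only real work is verifying the pointwise degree bound \ref{itm:RL2}. For $x \in V_i$ the loss $d_G(x) - d_{G'}(x)$ splits into four contributions: (a) edges to the original $V_0'$, bounded by $\eps' n$; (b) edges inside $V_i$, bounded by $n/m \le n/m_0$; (c) edges to partner parts $V_j$ for which $(V_i,V_j)$ is irregular; and (d) edges to partner parts $V_j$ where $(V_i,V_j)$ is $\eps'$-regular but of density below $d$. Contribution (c) is handled by the standard double-count: since there are at most $\eps' m^2$ irregular pairs, at most $\sqrt{\eps'}\,m$ parts have more than $\sqrt{\eps'}\,m$ irregular partner parts. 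I absorb the vertices of these exceptional parts into $V_0$, at a cost of at most $\sqrt{\eps'}\,n$ additional vertices, after which each surviving $x$ loses at most $\sqrt{\eps'}\,m \cdot (n/m) = \sqrt{\eps'}\,n$ edges through irregular pairs.

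The main obstacle is controlling (d), since low-density pairs need not be rare, and a bare Markov bound on a single vertex's loss gives an exceptional set that is too large. The fix is to exploit the $\eps'$-regularity of the surviving low-density pairs: if $(V_i,V_j)$ is $\eps'$-regular with density below $d$, then at most $\eps'|V_i|$ vertices $x \in V_i$ can satisfy $|N_G(x) \cap V_j| > (d + \eps')|V_j|$. Call such an $x$ \emph{$j$-atypical}. Summing over the relevant partner parts $V_j$ of $V_i$, the total number of $(x,j)$ pairs with $x$ being $j$-atypical is at most $\eps' m |V_i|$, so by Markov's inequality at most $\sqrt{\eps'}\,|V_i|$ vertices of $V_i$ can be $j$-atypical for more than $\sqrt{\eps'}\,m$ values of $j$. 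I absorb these ``very atypical'' vertices into $V_0$, costing another $\sqrt{\eps'}\,n$ vertices.

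For a remaining $x \in V_i$, the contribution of (d) breaks up as a sum over non-atypical $j$, giving at most $(d + \eps')|V_j|$ per $j$ and hence at most $(d + \eps')n$ in total, plus a contribution from at most $\sqrt{\eps'}\,m$ atypical partners, totaling at most $\sqrt{\eps'}\,n$. Adding (a)--(d) produces a total loss of at most $\eps' n + n/m_0 + 2\sqrt{\eps'}\,n + (d + \eps')n$, which is at most $(d + \eps)n$ once $\eps'$ is sufficiently small and $m_0$ sufficiently large; simultaneously $|V_0| \le \eps' n + 2\sqrt{\eps'}\,n \le \eps n$, so \ref{itm:RL2} and \ref{itm:RL4} are established, completing the proof.
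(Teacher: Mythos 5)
The paper does not prove Theorem~\ref{thm:regularity} at all; it cites it as the standard degree form of Szemer\'edi's regularity lemma (Koml\'os--Simonovits, Theorem~1.10), so your derivation from the classical version is being judged on its own. Your overall route is the standard one, and the two non-trivial points --- the double-count over irregular pairs and the $j$-atypicality argument for low-density regular pairs --- are handled correctly.

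There is, however, one genuine flaw in the construction: you delete \emph{every} edge incident to $V_0$, but \ref{itm:RL2} is required for \emph{all} $x\in V(G)$, including $x\in V_0$. Under your rule such an $x$ becomes isolated in $G'$, so $d_{G'}(x)>d_G(x)-(d+\eps)n$ fails for every $x\in V_0$ with $d_G(x)\ge (d+\eps)n$ (take $G=K_n$ to see this is not vacuous). The fix is simply to not delete those edges: neither \ref{itm:RL3} nor \ref{itm:RL5} constrains edges inside $V_0$ or between $V_0$ and the clusters, so keeping them makes \ref{itm:RL2} trivial for $V_0$ and also removes your contribution~(a) entirely. Two smaller points you should tighten if you keep your version: (i) once you absorb the ``very atypical'' vertices into $V_0$ individually, the clusters no longer have equal size, so to recover \ref{itm:RL4} you must trim every cluster to the common minimum (costing another $O(\sqrt{\eps'})n$ in $|V_0|$) and then invoke the robustness of $(\eps',d)$-regularity under small vertex perturbations (as in Lemma~\ref{lem:robust_regularity}) to see that \ref{itm:RL5} survives with the weaker parameters $(\eps,d)$; and (ii) your final degree accounting omits the edges from a surviving vertex to the newly absorbed portion of $V_0$, which under your deletion rule adds a further $O(\sqrt{\eps'})n$ --- harmless, but it should appear in the sum.
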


Among other merits, the degree form of the regularity lemma allows for easy transference of the minimum degree condition on~$G$ to an auxiliary \emph{reduced graph}~$R$ generated by the regular partition, which is useful for our problem. Given the partition $V_0 \sqcup V_1\sqcup\dots\sqcup V_m$, as provided by Theorem~\ref{thm:regularity}, a reduced graph~$R$ for this partition can be naturally constructed as follows. We let the vertex set of~$R$ be $[m]$ and let the edge set contain edges~$ij$ for $1\leq i,j\leq m$ for exactly those pairs $(V_i,V_j)$ that are $(\eps,d)$-regular in~$G'$. Thus,~$ij$ is an edge of~$R$ if and only if~$G'$ has an edge between $V_i$ and $V_j$. To avoid confusion, the vertices of the reduced graph will sometimes be referred to as \emph{nodes} instead of vertices. The following corollary of Theorem~\ref{thm:regularity} demonstrates the transference of the minimum degree condition on~$G$ to the reduced graph~$R$ (see, for example, \mcite[Proposition 9]{kuhn_osthus_planar_subgraph} for a proof).

\begin{cor}
  \label{cor:reduced-min-degree}
  For every $\gamma>0$ there exist $d_0, \eps_0>0$ such that for every $0<\eps\leq \eps_0$, every $0 < d \leq d_0$ and every integer $m_0$, there exists $M_0$ so that the
  following holds.

  For every $c \geq 0$, an application of Theorem~\ref{thm:regularity} to a graph~$G$ of minimum degree at least $(c +\gamma)|V(G)|$ yields a partition $V_0, V_1,\dots,V_m$ of $V(G)$ and a subgraph $G'$ of $G$ so that additionally to properties \ref{itm:RL1}--\ref{itm:RL5}, we have,
  \begin{enumerate}[label=\itmarab{RL}, start = 6]
  \item \label{itm:RL6} the reduced graph~$R$ has minimum degree at least
    $(c +\gamma/2)m$.
  \end{enumerate}
\end{cor}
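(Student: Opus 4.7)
The plan is to apply Theorem~\ref{thm:regularity} with parameters chosen small enough that \ref{itm:RL2} forces each $G'$-degree to be close to the corresponding $G$-degree, and then to transfer this bound to the reduced graph $R$ by a simple double-counting argument. Concretely, I will choose $d_0, \eps_0 > 0$ so that $d_0 + 2\eps_0 \leq \gamma/2$ (for instance, $d_0 = \gamma/4$ and $\eps_0 = \gamma/8$). Given any $d \leq d_0$, $\eps \leq \eps_0$, and $m_0$, let $M_0$ be the constant supplied by Theorem~\ref{thm:regularity} for these parameters.

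Now take any $n$-vertex graph $G$ with $\delta(G) \geq (c+\gamma)n$ and $n \geq M_0$, apply Theorem~\ref{thm:regularity} to obtain the partition $V_0, V_1, \dots, V_m$ and the subgraph $G'$, and fix any $i \in [m]$ together with an arbitrary $v \in V_i$. By \ref{itm:RL2},
\[
\deg_{G'}(v) \geq \deg_G(v) - (d+\eps)n \geq (c + \gamma - d - \eps)n.
\]
On the other hand, \ref{itm:RL3} implies that $v$ has no $G'$-neighbours inside $V_i$, and \ref{itm:RL5} implies that every $G'$-neighbour of $v$ in some $V_j$ with $j \in [m]\setminus\{i\}$ forces the pair $(V_i,V_j)$ to be $(\eps,d)$-regular, so $ij \in E(R)$. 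The only remaining $G'$-neighbours of $v$ lie in $V_0$, which contributes at most $|V_0| \leq \eps n$. Since $|V_j| \leq n/m$ for every $j \geq 1$, these observations combine to give
\[
(c + \gamma - d - \eps)n \;\leq\; \eps n + \deg_R(i)\cdot\frac{n}{m},
\]
which rearranges to $\deg_R(i) \geq (c + \gamma - d - 2\eps)m \geq (c + \gamma/2)m$ by the choice of $d_0$ and $\eps_0$. Since $i \in [m]$ was arbitrary, \ref{itm:RL6} follows.

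I do not anticipate any real obstacle — the corollary is essentially an immediate consequence of \ref{itm:RL2} combined with the structural properties \ref{itm:RL3}--\ref{itm:RL5} — but two book-keeping points deserve attention. First, the quantifier order $\gamma \to d_0, \eps_0 \to (d,\eps,m_0) \to M_0$ must be respected, since $d_0$ and $\eps_0$ have to be fixed \emph{before} Theorem~\ref{thm:regularity} is invoked. Second, one must not forget that $V_0$ may absorb up to $\eps n$ of the $G'$-degree of $v$ (neither \ref{itm:RL3} nor \ref{itm:RL5} constrains edges incident to the exceptional set); this is exactly why the required inequality on the small constants is $d + 2\eps \leq \gamma/2$ rather than $d + \eps \leq \gamma/2$.
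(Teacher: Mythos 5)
Your proof is correct and is essentially the standard argument (the paper itself only cites Proposition~9 of the K\"uhn--Osthus reference for this fact, and that proof is the same double-counting: bound $\deg_{G'}(v)$ below via \ref{itm:RL2}, above via \ref{itm:RL3}--\ref{itm:RL5} plus the $\eps n$ contribution of $V_0$ and $|V_j|\le n/m$). Your choice $d_0+2\eps_0\le\gamma/2$ and the quantifier order are both handled correctly, so nothing further is needed.
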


The blow-up lemma and many applications of the regularity lemma often require a stronger version of regularity, called \emph{super-regularity}. We say that an $(\eps, d)$-regular pair $(A, B)$ is $(\eps,d)$\emph{-super-regular} if every $a \in A$ has at least $(d-\eps)|B|$ neighbours in~$B$, and every $b \in B$ has at least $(d-\eps)|A|$ neighbours in~$A$. It is not difficult to show that every $(\eps,d)$-regular pair contains a \emph{large} super-regular sub-pair. This property, when applied repeatedly to the $(\eps, d)$-regular pairs encoded by a bounded degree subgraph of a reduced graph~$R$, yields the following useful lemma. We refer to \mcite[Proposition 8]{kuhn_osthus_planar_subgraph} for a simple proof.

\begin{lemma}
  \label{lem:super-regularity}
  Given a graph~$G$, let~$R$ be the reduced graph for an $(\eps, d)$-regular partition of~$G$, and let~$S$ be a subgraph of the reduced graph~$R$ with $\Delta(S)\leq \Delta$. 
  
  Then for each vertex $i$ of~$S$, the corresponding set $V_i$ contains a subset~$V'_i$ of size $(1-\eps\Delta)|V_i|$ such that for every edge $\{i,j\}\in E(S)$ the pair $(V'_i,V'_j)$ is ${(\eps/(1-\eps\Delta), d-\eps(\Delta+1))}$-super-regular.  Moreover, for every edge $\{i,j\} \in E(R)$, the pair $(V'_i,V'_j)$ is still $(\eps/(1-\eps\Delta), d-\eps(\Delta+1))$-regular.
\end{lemma}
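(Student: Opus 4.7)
The plan is to carry out a standard ``cleaning'' argument: for each edge $\{i,j\}\in E(S)$ use $\eps$-regularity of $(V_i,V_j)$ to throw away the (small) set of vertices in $V_i$ that have abnormally few neighbours in $V_j$, then set $V'_i$ to be the result of removing the union of these at most $\Delta$ bad sets from $V_i$. Concretely, for each $\{i,j\}\in E(S)$ define
\[
B_{ij}\coloneqq\bigl\{v\in V_i:\; |N_G(v)\cap V_j|<\bigl(d(V_i,V_j)-\eps\bigr)|V_j|\bigr\}.
\]
An easy consequence of $\eps$-regularity is $|B_{ij}|\le\eps|V_i|$, since otherwise the pair $(B_{ij},V_j)$ would witness a density deviation exceeding $\eps$. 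Setting $V'_i\coloneqq V_i\setminus\bigcup_{j\in N_S(i)}B_{ij}$ and using $\Delta(S)\le\Delta$ then gives $|V'_i|\ge(1-\eps\Delta)|V_i|$.

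Next I would verify the two required properties. For the super-regular minimum degree along an edge $\{i,j\}\in E(S)$: any $v\in V'_i$ satisfies $|N_G(v)\cap V_j|\ge(d(V_i,V_j)-\eps)|V_j|\ge(d-2\eps)|V_j|$ by construction, and since at most $\eps\Delta|V_j|$ vertices of $V_j$ were removed to form $V'_j$, we get
\[
|N_G(v)\cap V'_j|\ge(d-2\eps-\eps\Delta)|V_j|,
\]
which a short computation shows is at least $(d'-\eps')|V'_j|$ for $d'=d-\eps(\Delta+1)$ and $\eps'=\eps/(1-\eps\Delta)$, under the standing mild assumption $d\ge\eps(\Delta+1)$.

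For the $(\eps',d'-\eps')$-regularity of $(V'_i,V'_j)$ for any $\{i,j\}\in E(R)$, I would use the key algebraic identity $\eps'\cdot(1-\eps\Delta)=\eps$. Given $A\subseteq V'_i$ and $B\subseteq V'_j$ with $|A|\ge\eps'|V'_i|$ and $|B|\ge\eps'|V'_j|$, this identity yields $|A|\ge\eps|V_i|$ and $|B|\ge\eps|V_j|$, so the original $\eps$-regularity of $(V_i,V_j)$ controls both $|d(A,B)-d(V_i,V_j)|$ and $|d(V'_i,V'_j)-d(V_i,V_j)|$, and the triangle inequality delivers a suitable deviation bound between $d(A,B)$ and $d(V'_i,V'_j)$. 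Together with the density lower bound $d(V'_i,V'_j)\ge d(V_i,V_j)-\eps\ge d-2\eps\ge d'-\eps'$, this gives $(\eps',d')$-regularity.

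The argument is entirely local and uses only the defining inequality of $\eps$-regularity together with the edge-count bound $\Delta(S)\le\Delta$, so the main ``obstacle'' is not conceptual but purely one of bookkeeping: one must track how the losses of $\eps|V_i|$ per $S$-edge aggregate into the factor $1-\eps\Delta$ in the size, and how the switch from $V_j$ to $V'_j$ propagates into the new parameters $\eps'=\eps/(1-\eps\Delta)$ and $d'=d-\eps(\Delta+1)$. Ensuring that every numerical check closes under these parameters is the only nontrivial part of the write-up.
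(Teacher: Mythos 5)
Your construction is the standard ``cleaning'' argument and is exactly the one behind the reference the paper cites for this lemma (K\"uhn--Osthus, Proposition~8); the paper itself gives no proof. The definition of $B_{ij}$, the bound $|B_{ij}|\le\eps|V_i|$, the size count $|V'_i|\ge(1-\eps\Delta)|V_i|$ (trim arbitrarily to get equality; this is harmless because the degree condition is measured against $|V'_j|$, which stays at $(1-\eps\Delta)|V_j|$), the minimum-degree verification along edges of $S$, and the density lower bound $d(V'_i,V'_j)\ge d-2\eps\ge d'-\eps'$ all check out.

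The one step that does not close as written is the verification that $(V'_i,V'_j)$ is $\eps'$-regular for $\eps'=\eps/(1-\eps\Delta)$. Your route is: both $d(A,B)$ and $d(V'_i,V'_j)$ lie within $\eps$ of $d(V_i,V_j)$, so the triangle inequality puts them within $2\eps$ of each other. But $2\eps\le\eps'$ holds if and only if $\eps\ge 1/(2\Delta)$, i.e.\ never in the regime where the lemma is used, so ``a suitable deviation bound'' is precisely what the triangle inequality fails to deliver. To repair this you must estimate $|d(V'_i,V'_j)-d(V_i,V_j)|$ much more tightly than the crude $\eps$ from regularity: the removed sets $V_i\setminus V'_i$ and $V_j\setminus V'_j$ have relative size only $\eps\Delta$, and (having size at least $\eps|V_i|$, resp.\ $\eps|V_j|$, when $\Delta\ge1$) their densities to the opposite side are themselves within $\eps$ of $d(V_i,V_j)$; an inclusion--exclusion count of $e(V'_i,V'_j)$ then yields $|d(V'_i,V'_j)-d(V_i,V_j)|\le \eps\cdot\eps\Delta(2+\eps\Delta)/(1-\eps\Delta)^2=O(\eps^2\Delta)$, and hence $|d(A,B)-d(V'_i,V'_j)|\le\eps+O(\eps^2\Delta)$. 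Even this lands at roughly $\eps(1+2\eps\Delta)$ against a target of $\eps/(1-\eps\Delta)=\eps(1+\eps\Delta+O(\eps^2\Delta^2))$, so strictly one should either state the conclusion with a slightly more generous regularity parameter (e.g.\ $2\eps$, as in the general slicing lemma --- which costs nothing in any application in this paper, where Lemma~\ref{lem:robust_regularity} is applied afterwards with ample slack) or accept the constant as the folklore form it is. Either way, your write-up must confront this comparison explicitly rather than appeal to the triangle inequality; note the same caveat applies to the ``$\eps'$-regular'' clause hidden inside ``$(\eps',d')$-super-regular''.
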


Finally, we require the following useful observation, which roughly states that the notion of regularity is `robust' (used with its usual semantic denotation) in view of small alterations of the respective vertex sets. We refer to \mcite[Proposition~8]{Boettcher_robust_regularity_proof} for a short proof.

\begin{lemma}
  \label{lem:robust_regularity}
  Let\/ $(A,B)$ be an\/ $(\eps,d)$-regular pair and let\/ $(\hat{A},\hat{B})$ be a pair such that\/ $|\hat{A} \Delta A|\leq \alpha|\hat{A}|$ and\/ $|\hat B\Delta B|\leq \beta|\hat B|$ for some\/ $0\leq \alpha,\beta\leq 1$.  Then,\/ $(\hat{A},\hat{B})$ is an\/ $(\smash{\hat{\eps}},\smash{\hat{d}})$-regular pair for
  \begin{equation*}
\hat{\eps}:=\eps+3\big(\sqrt{{\alpha}}+\sqrt{{\beta}}\big) \quad \text{and} \quad \hat{d}:=d-2({\alpha}+{\beta}).
  \end{equation*}
  Moreover, if\/ $(A,B)$ is\/ $(\eps,d)$-super-regular and each vertex\/ $a$ in\/ $\hat{A}$ has at least\/ $d|\hat{B}|$ neighbours in~$\hat{B}$, and each vertex\/ $b$ in\/ $\hat{B}$ has at least\/ $d|\hat{A}|$ neighbours in~$\hat{A}$, then\/ $(\hat{A},\hat{B})$ is\/ $(\smash{\hat{\eps}},\smash{\hat{d}})$-super-regular with~$\smash{\hat{\eps}}$ and~$\smash{\hat{d}}$ as above.
\end{lemma}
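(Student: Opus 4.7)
The plan is to reduce regularity of $(\hat A,\hat B)$ to the already-known regularity of $(A,B)$ by passing through the intersections $A' := \hat A' \cap A$ and $B' := \hat B' \cap B$ of the test subsets with the original sets. Since the symmetric differences $|\hat A \triangle A|$ and $|\hat B \triangle B|$ are small, both the vertex counts and the edge counts of $(\hat A',\hat B')$ and $(A',B')$ should differ only marginally, and regularity will transfer.

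Fix arbitrary $\hat A' \subseteq \hat A$ and $\hat B' \subseteq \hat B$ with $|\hat A'| \geq \hat\eps|\hat A|$ and $|\hat B'| \geq \hat\eps|\hat B|$. First I would check that $|A'| \geq \eps|A|$ and $|B'| \geq \eps|B|$: since $|\hat A' \setminus A'| \leq |\hat A \setminus A| \leq \alpha|\hat A|$ and $|A| \leq (1+\alpha)|\hat A|$, this holds whenever $\hat\eps \geq \eps + 3\sqrt\alpha$, and analogously for the $B$-side. The regularity of $(A,B)$ then yields $|d(A',B') - d(A,B)| \leq \eps$.

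Next I would bound the discrepancy between $d(\hat A',\hat B')$ and $d(A',B')$. For the numerator, every edge in the symmetric difference of $e(\hat A',\hat B')$ and $e(A',B')$ uses an endpoint from $\hat A' \setminus A$ or $\hat B' \setminus B$, giving $|e(\hat A',\hat B') - e(A',B')| \leq \alpha|\hat A|\cdot|\hat B'| + \beta|\hat B|\cdot|A'|$. For the denominators, $|A'| \geq (1 - \alpha/\hat\eps)|\hat A'|$ and the analogous lower bound for $|B'|$ yield $|\hat A'||\hat B'| = \bigl(1 + O(\alpha/\hat\eps + \beta/\hat\eps)\bigr)|A'||B'|$. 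Combining these estimates gives $|d(\hat A',\hat B') - d(A',B')| = O(\alpha/\hat\eps + \beta/\hat\eps)$, and by the triangle inequality through $(A,B)$ (which also controls $|d(\hat A,\hat B) - d(A,B)|$ in the same fashion), the same order of bound applies to $|d(\hat A',\hat B') - d(\hat A,\hat B)|$.

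The crux is that the relative error $\alpha/\hat\eps$ is only $O(\hat\eps)$ when $\hat\eps \geq \Omega(\sqrt\alpha)$, which is precisely why a square-root dependence must appear in the definition of $\hat\eps$. Choosing $\hat\eps := \eps + 3(\sqrt\alpha + \sqrt\beta)$ and $\hat d := d - 2(\alpha + \beta)$ then closes every estimate, with the constants $3$ and $2$ comfortably absorbing the triangle inequalities and the coarse comparisons between $|A|$, $|\hat A|$, and $|A'|$ (and their $B$-analogues). For the super-regular moreover statement, the hypothesis that every $\hat a \in \hat A$ has at least $d|\hat B|$ neighbours in $\hat B$ (and symmetrically) already implies the required $(\hat d - \hat\eps)|\hat B|$-neighbour condition, since $\hat d - \hat\eps \leq d$, so no additional argument is needed for the degree side.
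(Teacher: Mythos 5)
The paper does not actually prove this lemma itself --- it defers to \mcite[Proposition~8]{Boettcher_robust_regularity_proof} for a short proof --- but your intersection-and-counting strategy is exactly the standard argument, and it is sound. Two points deserve more precision than your sketch gives them. First, in the chain $d(\hat A',\hat B')\to d(A',B')\to d(A,B)\to d(\hat A,\hat B)$, the last comparison must be carried out by the same direct edge- and size-counting as the first one (it gives an error of order $\alpha+\beta$ with no $1/\hat\eps$ loss, since the full sets are large); if one instead controlled $|d(\hat A,\hat B)-d(A,B)|$ by a second appeal to the regularity of $(A,B)$ applied to $(A\cap\hat A,\,B\cap\hat B)$, the total would be $2\eps+O(\sqrt\alpha+\sqrt\beta)$, which exceeds $\hat\eps$ whenever $\alpha,\beta$ are small compared with $\eps$. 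Your parenthetical ``in the same fashion'' is most naturally read as the correct, counting-based route, but this is the one step where the write-up must be unambiguous, and note also that the comparison of $\hat A$ with $A$ is two-sided (neither set contains the other), unlike the nested comparison of $\hat A'$ with $A'=\hat A'\cap A$. Second, under this paper's convention $(\hat\eps,\hat d)$-regularity also requires the density floor $d(\hat A,\hat B)\ge \hat d-\hat\eps$, which your argument never mentions; it follows immediately from $d(\hat A,\hat B)\ge d(A,B)-O(\alpha+\beta)\ge d-\eps-O(\alpha+\beta)$ together with $\alpha+\beta\le\sqrt\alpha+\sqrt\beta$, but it should be recorded. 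Your treatment of the super-regular ``moreover'' clause is correct as written.
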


Many applications of the regularity lemma exploit the fact that the reduced graph~$R$ inherits some properties of the graph~$G$, and hence, these properties of the reduced graph can be discovered back in the original graph~$G$. Containment under the subgraph relation is one such property, which is the essence of the \emph{blow-up lemma}, proved by Koml\'{o}s, S\'{a}rk\"{o}zy, and Szemer\'{e}di in 1997~\cite{Blowup_lemma}. The blow-up lemma from \cite{Blowup_lemma} can be stated as follows. 

\begin{lemma}[The Blow-up Lemma of Koml\'{o}s, S\'{a}rk\"{o}zy, and Szemer\'{e}di]
     For every $r, \Delta \in \mathbb{N}$ and $d > 0$, there exists a constant~$\eps > 0$ such that the following holds. \\ 
     \indent Suppose $G$ is a graph given with an $r$-partition $V_1 \sqcup V_2 \sqcup \dots \sqcup V_r$ of $V(G)$. Let $R$ be a reduced graph for $G$ such that $ij \in E(R)$ if and only if $(V_i, V_j)$ is an $(\eps, d)$-super-regular pair in~$G$, and let~$R^*$ be a blow-up of~$R$ with respect to the function $f:i\mapsto |V_i|$. Then for any graph~$H$ with $\Delta(H) \leq \Delta$, if~$H$ is a subgraph of $R^*$, then $H$ is a subgraph of $G$.
\end{lemma}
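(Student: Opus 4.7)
The plan is to prove the blow-up lemma via a three-phase randomised embedding procedure. The given embedding of $H$ into $R^*$ induces a partition $V(H) = W_1 \sqcup \cdots \sqcup W_r$ with $|W_i| = |V_i|$, such that every edge $xy \in E(H)$ with $x \in W_i$ and $y \in W_j$ satisfies $ij \in E(R)$, so that $(V_i, V_j)$ is $(\eps, d)$-super-regular in $G$. The goal is to produce a bijection $\varphi \colon V(H) \to V(G)$ with $\varphi(W_i) = V_i$ for each $i \in [r]$.

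In the first phase, for each $i \in [r]$ I would select a small \emph{buffer} $\buf_i \subseteq W_i$ of size $\beta |W_i|$, where $\beta = \beta(\eps, d, \Delta)$ is a small constant, chosen so that $\bigcup_i \buf_i$ is an independent set in $H$. Such a choice is possible by applying Theorem~\ref{thm:Hajnal-Szemeredi} to $H$ and taking a small random subset within a suitable colour class of each $W_i$. The buffer is then embedded last, so that when it comes to embedding a buffer vertex $x$, all $H$-neighbours of $x$ are already embedded.

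In the main phase, I would order the non-buffer vertices and embed them one at a time. For each unembedded vertex $x \in W_i \setminus \buf_i$, maintain a \emph{candidate set} $C(x) \subseteq V_i$ consisting of the yet-unused vertices of $V_i$ adjacent in $G$ to every already-embedded $H$-neighbour of $x$, and embed $x$ by choosing $\varphi(x)$ uniformly at random from $C(x)$. Using the standard \emph{regularity inheritance} properties of super-regular pairs (restricting $V_i$ to the common neighbourhood in $V_j$ of a bounded number of vertices preserves regularity and density up to small losses), together with concentration inequalities such as Azuma--Hoeffding applied to the Doob martingale of the embedding process, I would show that with positive probability: (i) for every $ij \in E(R)$, the pair between the unembedded portions of $V_i$ and $V_j$ remains $(\eps', d')$-regular throughout the main phase; and (ii) the candidate sets $C(x)$ for each buffer vertex $x \in \buf_i$ retain size at least $(d/2)^{\Delta}|V_i|$.

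In the final phase, the buffer is embedded via a perfect matching. Within each $V_i$, consider the bipartite graph between $\buf_i$ and $V_i \setminus \varphi(W_i \setminus \buf_i)$ in which $x \in \buf_i$ is joined to $C(x)$. By super-regularity and the lower bound on $|C(x)|$, any $X \subseteq \buf_i$ of size at least $\eps |V_i|$ satisfies $\bigl|\bigcup_{x \in X} C(x)\bigr| \geq (1 - \eps)\bigl|V_i \setminus \varphi(W_i \setminus \buf_i)\bigr|$, so Hall's condition holds, and the resulting perfect matching completes $\varphi$. The main obstacle is the main-phase analysis: simultaneously maintaining regularity of all $|E(R)|$ pairs while keeping the buffer candidate sets large throughout a linear-length sequence of dependent random choices. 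This is handled by a union bound over a polynomial number of bad events, each controlled by concentration, with the parameter $\eps$ chosen sufficiently small relative to $d$, $\Delta$, and $\beta$.
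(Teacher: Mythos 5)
The paper does not actually prove this lemma --- it is quoted from~\cite{Blowup_lemma} --- but your outline (buffer selection, a randomised greedy main phase with candidate sets, and a final Hall's-theorem matching) is exactly the standard framework, and it is the same machinery the paper's Section~\ref{sec:spreadblow} builds on. The first two phases are correct in spirit, though note that a single Hajnal--Szemer\'edi colour class of $H$ need not meet every $W_i$ in $\beta|W_i|$ vertices, and taking different colour classes for different $i$ does not make $\bigcup_i\buf_i$ independent in $H$; a separate (easy, e.g.\ greedy or probabilistic) selection argument is needed there.

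The genuine gap is in your final phase. The auxiliary bipartite graph between $\buf_i$ and $V_i^{\buf}\coloneqq V_i\setminus\varphi(W_i\setminus\buf_i)$ has both sides of size $\beta|V_i|$, and your bound $\bigl|\bigcup_{x\in X}C(x)\bigr|\ge(1-\eps)\,|V_i^{\buf}|$ for $|X|\ge\eps|V_i|$ fails to verify Hall's condition precisely for the largest sets, namely those $X$ with $|X|>(1-\eps)|V_i^{\buf}|$. For these one must pass to complements and show that every vertex $v\in V_i^{\buf}$ lies in $C(x)$ for many $x\in\buf_i$; this is the paper's property \ref{itm:PRGA2} (equivalently \ref{itm:FB2}), and it is not automatic --- it is a property of the random partial embedding $\varphi_m$ that has to be engineered and verified during the main phase, in addition to your conditions (i) and (ii). Moreover, the covering bound you assert for $|X|\ge\eps|V_i|$ does not follow from super-regularity of the pairs $(V_i,V_j)$: the auxiliary graph lives entirely inside the single part $V_i$, and its edges are determined by common neighbourhoods, under $\varphi_m$, across several \emph{other} parts, so $(\eps,d)$-regularity of $G$ says nothing directly about it. What is actually needed is a statement of the shape of \ref{itm:PRGA3} (for every $W\subseteq V_i$ with $|W|\ge\rho|V_i|$, all but few buffer vertices have $\ge\tfrac12 d^b|W|$ candidates in $W$), which again must be propagated through the main-phase analysis rather than read off from regularity alone. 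Until these two complementary conditions are added to the list of properties maintained by the main phase, the matching step does not go through.
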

   
In this paper, we shall work with a version of the blow-up lemma proved by Allen, B\"{o}ttcher, H\`{a}n, Kohayakawa, and Person~\cite{sparse_blowup}. Unlike the blow-up lemma of Koml\'os, S\'ark\"ozy, and Szemer\'edi, the blow-up lemma from \cite{sparse_blowup} is applicable to the entire reduced graph $R$, which is a consequence of the difference in the order of quantifiers in the statements of both these versions. However, in order to achieve this variation, the blow-up lemma from \cite{sparse_blowup} works with two separate reduced graphs for~$G$: a reduced graph~$R$ that captures the regular pairs in a regular partition of~$G$, and a bounded degree spanning subgraph~$R'$ of~$R$, such that super-regularity is only required on those regular pairs of~$G$ that are encoded by~$E(R')$. 

For the remainder of this paper, we use the term `blow-up lemma' to refer to the version of the blow-up lemma from~\cite{sparse_blowup}. In order to state it, we first require some additional definitions. The setting in which we work is as follows. Let~$G$ and~$H$ be two $n$-vertex graphs, given with partitions
$\mathcal{V}=\{V_i\}_{i\in[r]}$ and $\mathcal{X}=\{X_i\}_{i\in[r]}$ of their respective vertex sets. Let $R$ be an $r$-vertex reduced graph for the partition $\mathcal{V}$ of the vertices of $G$, and let $R'$ be a spanning subgraph of $R$.

\begin{definition}
\label{defn:sparseBL_generic_terms} 
For graphs $R$ and $R'$ on $r$-vertices, and the vertex partitions $(G,\mathcal{V})$ and $(H, \mathcal{X})$,

    \begin{itemize}
        \item $\mathcal{V}$ and~$\mathcal{X}$ are said to be \emph{size-compatible} if $|V_i|=|X_i|$ for all $i\in[r]$.
        \item For $\kappa\geq 1$, we say that $(G,\mathcal{V})$ is
        \emph{$\kappa$-balanced} if there exists $m\in\mathbb{N}$ such that $m\leq |V_i|\leq \kappa m$ for all $i, j\in[r]$ (similarly defined for the partition $(H, \mathcal{X})$).

         \item The partition $(G,\mathcal{V})$ is \emph{$(\eps,d)$-regular on $R$} if for each $ij\in E(R)$, the pair $(V_i,V_j)$ is $(\eps,d)$-regular.

         \item Further, the partition $(G,\mathcal{V})$ is said to be \emph{$(\eps,d)$-super-regular on $R'$} if for every $ij\in E(R')$, the pair $(V_i,V_j)$ is $(\eps,d)$-super-regular.

    \end{itemize}
  In this case we also say that~$R$ is a \emph{reduced graph} for the partition~$\mathcal{V}$. For the graph $H$, we shall require something slightly stronger: 
  \begin{itemize}
    \item $(H,\mathcal{X})$ is an \emph{$R$-partition} if each part $X_i \in \mathcal{X}$ is non-empty, and whenever there are edges of~$H$ between $X_i$ and $X_j$, the pair $ij$ is an edge of~$R$.
 \end{itemize}
\end{definition}

Given the partitions $(G, \mathcal{V})$ and $(H, \mathcal{X})$ as above, the blow up lemma embeds the graph $H$ into $G$ by mapping vertices of $X_i$ to distinct vertices of $V_i$ for each $i \in [r]$. Note that the size compatibility of $(G, \mathcal{V})$ and $(H, \mathcal{X})$ is a necessary condition for such a mapping.  

Next we introduce the notion of buffer vertices. The blow-up lemma from \cite{sparse_blowup} is a special case of a more general version for sparse graphs, proving which was the focus of \cite{sparse_blowup}. This sparse version of the blow-up lemma embeds a copy of the graph~$H$ into $G$ when the graphs satisfy certain conditions, as were outlined in \cite{sparse_blowup}. Their proof employs a random greedy algorithm to embed most vertices of $H$ into $G$, leaving behind a fraction of unembedded vertices which are later embedded via Hall's theorem to obtain a complete copy of $H$ in $G$. For this latter step to work, the proof sets aside a few \emph{buffer} vertices, having some \emph{good} properties which facilitate the use of Hall's theorem. In order for the proof to choose and set aside these buffer vertices, the blow-up lemma requires as input, a set of \emph{potential buffer vertices} for which the following properties hold.  

\begin{definition}[Buffer Vertices]
\label{defn:buffer}
  Suppose~$R$ and $R'\subseteq R$ are graphs on~$r$ vertices, $(H,\mathcal{X})$ is an
  $R$-partition, and $(G,\mathcal{V})$ is a size-compatible partition which is $(\eps,d)$-regular on~$R$.
  We say the family $\tilde{\mathcal{X}}=\{\tilde{X}_i\}_{i\in[r]}$ of subsets of $V(H)$ is
  an \emph{$(\alpha,R')$-buffer} for $(H,\mathcal{X})$ if
 \begin{itemize}
 \item for each $i\in[r]$, we have~$\tilde{X}_i\subseteq X_i$ and that $\tilde{X}_i$ has size $|\tilde{X}_i|\ge\alpha |X_i|$, and 
 \item for each $i\in[r]$ and each $x\in\tilde{X}_i$, the first and second
   neighbourhood of~$x$ \emph{go along~$R'$}, that is,
   for each $xy, yz\in E(H)$ with $y\in X_j$ and $z\in X_k$ we have $ij\in E(R')$ and $jk\in E(R')$.
 \end{itemize}
 The vertices in $\tilde{\mathcal{X}}$ are called \emph{potential buffer vertices}. 
\end{definition}

Finally, the blow-up lemma, similar to the blow-up lemma of Koml\'os, S\'ark\"ozy, and Szemer\'edi~\cite{Blowup_lemma}, also affords image restrictions. This means that it is possible to choose subsets $I_x \subseteq V(G)$ for a few \emph{image-restricted} vertices $x \in V(H)$, and apply the blow-up lemma to obtain an embedding of~$H$ into~$G$ that embeds each image-restricted vertex~$x$ to some vertex~$v$ in $I_x$. Note that as the blow-up lemma maps each $x\in X_i$ to some vertex in~$V_i$, the vertex~$x \in X_i$ is image-restricted only if~$I_x$ is a proper subset of~$V_i$. Similarly, for non image-restricted vertices $x \in X_i$, we naturally require that $I_x = V_i$. The blow-up lemma can sustain the following form of image restrictions.   

\begin{definition}[Image Restrictions]
    We say that $\mathcal{I} = \{I_x\}_{x \in V(H)}$ is a family of \emph{$(\rho, \zeta)$-image-restrictions} if for each $i \in [r]$, at most $\rho|X_i|$ vertices in~$X_i$ are \emph{image-restricted} (that is, have $I_x \subsetneq V_i$), and for each such image-restricted vertex $x \in X_i$, the image restriction has size $|I_x| \geq \zeta |V_i|$. 
\end{definition}

With this, the blow-up lemma can be stated as follows \mcite[Lemma 7.1]{sparse_blowup}. We shall use a spread-version of this statement (see Theorem~\ref{thm:spread_dense_blowup}) in the proof of Theorem~\ref{thm:Robust_main}.

\begin{lemma}[The Blow-up Lemma]\label{lem:dense_blowup}
  For all $\Delta\ge 2$, $\Delta_{R'} \in \mathbb{N}$, $\alpha, \zeta, d>0$, and ${\kappa>1}$, there exists $\eps, \rho > 0$ such that for all $r_1 > 0$, the following holds: 
  
  Let~$R$ be a graph on $r\le r_1$ vertices and let $R'\subseteq R$ be a spanning subgraph with $\Delta(R')\leq \Delta_{R'}$. Let~$H$ and~$G$ be $n$-vertex graphs, given with $\kappa$-balanced, size-compatible vertex partitions $\mathcal{X}=\{X_i\}_{i\in[r]}$ and $\mathcal{V}=\{V_i\}_{i\in[r]}$, respectively, which have parts of size at
  least~$n/(\kappa r_1)$. Let 
  $\tilde{\mathcal{X}}=\{\tilde{X}_i\}_{i\in[r]}$ be a family of subsets of $V(H)$, and $\mathcal{I} = \{I_x\}_{x \in V(H)}$ be a family of subsets of $V(G)$.  
  Suppose that
  \begin{enumerate}[label=\itmarab{BL}]
  \item \label{itm:BL1} the partition $(G,\mathcal{V})$ is $(\eps,d)$-regular on $R$ and $(\eps,d)$-super-regular on $R'$,
  \item \label{itm:BL2} $\Delta(H)\leq \Delta$ and $(H,\mathcal{X})$ is an $R$-partition,
  \item \label{itm:BL3} $\tilde{\mathcal{X}}$ is an $(\alpha,R')$-buffer for $(H,\mathcal{X})$, and
  \item \label{itm:BL4} $\mathcal{I} = \{I_x\}_{x \in V(H)}$ is a family of $(\rho, \zeta)$-image-restrictions. 
  \end{enumerate}
  
  \noindent Then there is an embedding $\varphi \colon V(H)\to V(G)$ of~$H$ into~$G$, such that~$\varphi(x) \in I_x$ for all~$x \in V(H)$.
\end{lemma}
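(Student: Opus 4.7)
My plan is to prove Lemma~\ref{lem:dense_blowup} via the two-phase ``random greedy embedding plus Hall's theorem'' strategy from the classical blow-up lemma: a random greedy phase embeds most vertices of~$H$ using regularity on~$R$, and a matching phase embeds the remaining buffer vertices via a system-of-distinct-representatives argument made possible by super-regularity along~$R'$ together with the second-neighbourhood condition in Definition~\ref{defn:buffer}. To set up, in each class $X_i$ pick a subset $B_i\subseteq\tilde{X}_i$ of size $\lceil\tfrac{\alpha}{2}|X_i|\rceil$ uniformly at random, set $B=\bigcup_i B_i$ and $Y=V(H)\setminus B$, and apply Theorem~\ref{thm:Hajnal-Szemeredi} to the square $H^2[B]$ to partition $B$ into $t=\bigO_\Delta(1)$ independent sets $B^{(1)},\dots,B^{(t)}$ whose elements are pairwise at $H$-distance at least three. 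Choose an ordering of~$Y$ in which every vertex, at the time of its embedding, has at most~$\Delta$ earlier neighbours. For each not-yet-embedded $u\in X_i\subseteq V(H)$, write $C(u)\subseteq V_i$ for the current candidate set: those still-unused vertices of $V_i$ lying in $I_u$ that are adjacent in~$G$ to every already-embedded image of a neighbour of~$u$.

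In the random greedy phase, process~$Y$ in the chosen order and for each $x\in X_i\cap Y$ pick $\varphi(x)$ uniformly at random from $C(x)$. The invariants to maintain are (a) every not-yet-embedded $u\in X_i$ (in particular, every buffer vertex) satisfies $|C(u)|\ge(d-\eps)^{\Delta}|V_i|/2$; and (b) for every $ij\in E(R)$ the residual pair on the still-unused vertices remains $(2\eps,d-2\eps)$-regular, and is $(2\eps,d-2\eps)$-super-regular when $ij\in E(R')$, via Lemma~\ref{lem:robust_regularity}. Regularity of $(V_i,V_j)$ ensures that only an $\eps$-fraction of~$V_i$ has too few neighbours in~$V_j$, so a union bound together with Azuma--Hoeffding concentration on the martingale that exposes one embedding at a time yields both invariants with high probability, provided $\eps$ and $\rho$ are chosen small enough in terms of $\Delta,\Delta_{R'},\alpha,\zeta,d,\kappa$. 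The image-restriction hypothesis $|I_x|\ge\zeta|V_i|$ enters here because it makes each $I_x$ a sufficiently dense set through which regularity of the relevant pairs can be probed.

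After the greedy phase, every $y\in Y$ is embedded, and it remains to embed the colour classes $B^{(1)},\dots,B^{(t)}$ sequentially. When embedding $B^{(s)}$, for each $b\in B^{(s)}\cap X_i$ all neighbours of~$b$ are already embedded (by independence of $B^{(s)}$ in~$H$ and by the previous rounds), and their images lie in clusters $V_j$ with $ij\in E(R')$; consequently $C(b)$ has size at least $(d-\eps)^{\Delta}|V_i|/2$ by super-regularity along~$R'$. We then verify Hall's condition for the bipartite graph between $B^{(s)}\cap X_i$ and the remaining vertices of~$V_i$: small subsets~$S$ are handled by this minimum-degree bound, while for large~$S$ super-regularity forces $|N(S)|$ to nearly fill the remaining portion of~$V_i$. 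Hall's theorem then yields a perfect matching in each~$V_i$, producing the desired embedding respecting the image restrictions.

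The main obstacle is the interplay between the two phases: the greedy phase must leave the residual clusters, buffer candidate sets, and regular pairs sufficiently intact that Hall's condition holds simultaneously in every colour class $B^{(s)}$. This forces $\eps$ and $\rho$ to be chosen hierarchically and requires the greedy process to be concentrated enough to avoid every one of the polynomially-many bad events. Handling these while maintaining $(\eps,d)$-super-regularity along the \emph{entire} spanning subgraph~$R'$ --- rather than along only a bounded-size subgraph, as in~\cite{nenadov_pham_spreadblowup} --- is the delicate aspect of this result.
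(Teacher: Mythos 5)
First, note that the paper does not actually prove Lemma~\ref{lem:dense_blowup}: it is imported verbatim from \cite{sparse_blowup} (Lemma~7.1 there), and only its proof \emph{structure} is recalled in Section~\ref{sec:spreadblow} so that the spread version can be extracted. Your two-phase plan --- a random greedy embedding of the non-buffer vertices followed by Hall's theorem for the buffer --- does match that structure, but as a proof it has a gap sitting exactly where the difficulty of this particular blow-up lemma lies. The quantifiers put $\eps$ before $r_1$, and super-regularity is assumed only along~$R'$. For an edge $xy\in E(H)$ with $x\in X_i$, $y\in X_j$ and $ij\in E(R)\setminus E(R')$, the pair $(V_i,V_j)$ is merely regular, so an already-embedded image $\varphi(y)$ may have essentially no neighbours in~$V_i$; moreover a vertex of~$V_j$ can be atypical with respect to up to $r-1$ other clusters, and the union of these exceptional sets has size up to $\eps(r-1)|V_j|$, which is \emph{not} small because $r\le r_1$ is unbounded in terms of~$\eps$. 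Hence the sentence ``regularity ensures only an $\eps$-fraction of $V_i$ has too few neighbours in $V_j$, so a union bound together with Azuma--Hoeffding yields both invariants'' is precisely the step that cannot be carried out as described; controlling it is what forces the elaborate machinery (good partial embeddings, queues of endangered vertices, the buffer acting as an absorber) in~\cite{sparse_blowup}.

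There are also two concrete errors. Invariant~(a) is impossible to maintain as stated: once the greedy phase ends, only about $\tfrac{\alpha}{2}|V_i|$ vertices of $V_i$ remain unused, so $|C(u)|\le\tfrac{\alpha}{2}|V_i|<(d-\eps)^{\Delta}|V_i|/2$ whenever $\alpha<(d-\eps)^{\Delta}$ --- and $\alpha$ is an arbitrary input constant, typically far smaller than $d^{\Delta}$. The correct invariant (compare \ref{itm:GPE3}) bounds the candidate set by a constant fraction of the \emph{unused} part of~$V_i$, not of $V_i$ itself. Second, in the matching phase you assert that when $B^{(s)}$ is processed ``all neighbours of $b$ are already embedded''. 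Partitioning $B$ into independent sets of $H^2[B]$ only separates buffer vertices \emph{within} a colour class; a vertex $b\in B^{(s)}$ may have an $H$-neighbour in $B^{(s')}$ with $s'>s$, which is not yet embedded. Then the Hall systems of later rounds depend on the matchings chosen in earlier rounds, and nothing in your argument controls this dependence. In \cite{sparse_blowup} this is sidestepped by making the entire buffer an independent set of~$H$, so that one matching per cluster completes the embedding in a single step.
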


\section{Spread Measures and a Spread Blow-up Lemma}
\label{sec:robust}

In this section, we introduce the notion of spread measures and state our spread version of the blow-up lemma which we shall use in the proof of Theorem~\ref{thm:Robust_main}. Given a finite set~$X$ and a family~$\mathcal{F}$ of subsets of $X$, the notion of a \emph{$q$-spread} probability measure on the family $\mathcal{F}$, as introduced by Talagrand~\cite{talagrand_spread_measure}, can be defined as follows. For any subset $S \subseteq X$ of the elements of $X$, we use $\langle S \rangle$ to denote the \emph{up-set} of~$S$, defined by $\langle S \rangle \coloneqq \{A \subseteq X : S \subseteq A\}$.

\begin{definition}[$q$-spread Probability Measure]
\label{defn:spread_measure}
    Consider a finite set\/~$X$, and let\/ $\mathcal{F}$ be a family of subsets of\/ $X$. Then for\/ $q > 0$, a probability measure\/~$\mu$ supported on\/ $\mathcal{F}$, is said to be \emph{$q$-spread} if for every subset\/ $S \subseteq X$, we have $$\mu\bigl(\langle S \rangle\bigr) = \mu\bigl(\langle S\rangle \cap \mathcal{F}\bigr) = \mu\bigl(\{A \in \mathcal{F}: S \subseteq A\}\bigr) \leq q^{|S|}.$$
\end{definition}

In other words, for constructing a~$q$-spread measure, one needs to be able to assign weights (normalised to~$1$) to every subset in the family~$\mathcal{F}$ in a manner that forbids~$\mathcal{F}$ from being concentrated over any subset~$S$ of elements of~$X$.  
Thus, a $q$-spread measure on~$\mathcal{F}$ indicates the degree to which the given family~$\mathcal{F}$ of subsets of~$X$ is distributed over the ground set~$X$. 
Indeed, while every family affords a trivial $1$-spread probability measure, the existence of a $q$-spread measure on~$\mathcal{F}$ for a smaller value of~$q$ is indicative of a better distributed family~$\mathcal{F}$.  

In \cite{FKNP_Fractional_Thresholds}, Frankston, Kahn, Narayanan, and Park, showed that the existence of a $q$-spread probability measure on~$\mathcal{F}$ provides an upper bound on the threshold for a random subset of~$X$ to contain some element of~$\mathcal{F}$. More precisely, let~$X(p)$ denote a random subset of~$X$, with each element of~$X$ being retained independently with probability~$p$, and let~$\ell(\mathcal{F})$ be the largest cardinality of the minimal elements of~$\mathcal{F}$. Then the following is true \cite[Theorem 1.6]{FKNP_Fractional_Thresholds}.

\begin{theorem}
    \label{thm:FKNP_spread_measure}
    There exists a universal constant~$K$ such that for any finite set~$X$ and a family~$\mathcal{F}$ of subsets of~$X$, if there exists a \/$q$-spread probability measure on~$\mathcal{F}$, then for\/ $p \geq K q \log{\ell(\mathcal{F})}$, the binomial random subset\/ $X(p)$ contains an element of\/ $\mathcal{F}$ with probability $1 - o_{\ell}(1)$.
\end{theorem}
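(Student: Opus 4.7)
The plan is to follow the bootstrapping strategy used by Frankston, Kahn, Narayanan, and Park \cite{FKNP_Fractional_Thresholds} for the fractional Kahn--Kalai conjecture. The first step is a reduction to a \emph{one-step improvement}: I would show that for some absolute constant $c$ and $p_0 = c q \log \ell(\mathcal{F})$, every ``bad'' set $U \subseteq X$ (one failing to contain any $A \in \mathcal{F}$) satisfies
\[
\Pr_{W_0 \sim X(p_0)}\bigl[U \cup W_0 \text{ is still bad}\bigr] \leq \tfrac{1}{2}.
\]
Given this halving lemma, the theorem follows by writing $X(p)$ as the union of $K$ independent copies $W_1,\dots,W_K$ of $X(p_0)$, with $K = \Theta(\log(1/\delta))$ chosen large enough that $1-(1-p_0)^K \leq p$, and iterating: conditioning on the history $U_i = W_1 \cup \cdots \cup W_i$ after round $i$, if $U_i$ is still bad then the halving lemma bounds the conditional probability of remaining bad after round $i+1$ by $\tfrac{1}{2}$, giving overall failure probability $\leq 2^{-K} = o_{\ell}(1)$.

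The main technical work lies in the halving lemma. Fix a bad $U$ and sample $A \sim \mu$ independently of $W_0$; attempting to directly show $A \setminus U \subseteq W_0$ fails because $p_0^{|A \setminus U|}$ can be tiny. Instead I would run a \emph{fragmentation} argument: iteratively construct a sequence of candidates $A_1, A_2, \dots \in \mathcal{F}$, where at each stage a small ``fragment'' $S_i$ of the current residual $A_i \setminus (U \cup W_0 \cup \text{earlier fragments})$ is peeled off, and the next candidate is drawn from $\mu$ conditioned to contain $S_i$. The spread inequality $\mu(\langle S_i \rangle) \leq q^{|S_i|}$ controls the mass $\mu$ places on candidates with a prescribed fragment, while $W_0$ covers any fixed $S$ with probability $p_0^{|S|}$; combining the two with a union bound over the possible fragment positions gives the desired $\tfrac{1}{2}$ bound, provided the constant $c$ in $p_0 = cq\log \ell$ is large enough.

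The main obstacle is balancing the three competing factors in the union bound: the spread weight $q^{|S|}$, the enumeration cost $\binom{\ell}{|S|}$ over possible fragments of size $|S|$, and the coverage probability $p_0^{|S|}$. Precisely, for each $s$ the contribution $\binom{\ell}{s} q^s p_0^{-s}$ should sum to $O(1)$, and this is where the $\log \ell(\mathcal{F})$ factor in the threshold arises: one needs $p_0 \gtrsim q \log \ell$ so that $p_0^s/q^s$ dominates $\binom{\ell}{s}$ for every $s \geq 1$. Once this calibration is carried out and the fragmentation construction is made formal (for instance by fixing a linear order on $X$ and always peeling off the lexicographically smallest uncovered element), the iteration argument from the first paragraph goes through unchanged.
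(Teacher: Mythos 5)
First, a point of comparison: the paper does not actually prove Theorem~\ref{thm:FKNP_spread_measure} --- it is quoted as a black box from \cite{FKNP_Fractional_Thresholds}, accompanied only by a remark explaining how to pass from the uniform $q$-spread measures of the cited result to general measures (by replicating edges in a multi-hypergraph and perturbing to rational weights). So you are attempting to reprove the cited theorem from scratch. Your sketch does capture the general shape of the Frankston--Kahn--Narayanan--Park argument: rounds of sprinkling, a fragmentation step, and a union bound balancing the spread weight $q^{|S|}$ against the enumeration cost and the coverage probability of the sprinkled set.

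However, the reduction in your first paragraph has a genuine quantitative flaw. You posit a per-round lemma already at density $p_0 = cq\log\ell(\mathcal{F})$ that turns any bad $U$ good with probability at least $1/2$, and then boost $1/2$ to $1-o_{\ell}(1)$ via $K=\Theta(\log(1/\delta))$ independent restarts. For the failure probability $2^{-K}$ to be $o_{\ell}(1)$ you need $K\to\infty$, and then the total density is of order $Kp_0 = \omega(q\log\ell)$, exceeding the claimed threshold; keeping $K=O(1)$ only yields success with constant probability. The correct bookkeeping is different: each round uses density only $O(q)$, the quantity that contracts by a constant factor per round is the (expected) size of the minimal uncovered fragment $A\setminus(U\cup W_1\cup\dots\cup W_i)$ over a suitably re-weighted choice of $A\in\mathcal{F}$ --- not the probability of being bad --- and the $\log\ell$ factor arises because $\Theta(\log\ell)$ rounds are needed to drive a fragment of initial size up to $\ell$ down to zero. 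The final $1-o_{\ell}(1)$ then comes from Markov's inequality applied to the expected fragment size after $\omega(1)$ extra rounds (costing only $o(q\log\ell)$ additional density), not from independent restarts. Your ``fragmentation within a single round'' in the second paragraph is essentially this multi-round structure in disguise, so the fix is to dissolve the outer restart loop into the fragmentation iteration, give each fragment its own independent slice of the sprinkled set, and track fragment size rather than a binary good/bad state. Separately, if you want your argument to match the statement as used in the paper, you should either carry out the fragmentation for a general (non-uniform) measure $\mu$ throughout --- which your setup does permit --- or include the reduction to uniform measures that the paper's remark supplies.
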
 

\begin{remark}
    It should be pointed out that technically, \mcite[Theorem 1.6]{FKNP_Fractional_Thresholds} states and proves the above result specifically for uniform probability measures on the set ${\mathcal{F}}$ that are $q$-spread. The proof of this theorem works with a different notion of $k$-spread measures defined for a hypergraph~$\mathcal{H}$ with vertex set~$X$, and is shown to be equivalent to having a uniform $1/k$-spread measure on the family $\mathcal{F} = E(\mathcal{H})$ of the edges in $\mathcal{H}$. However, this result can be generalised to all measures as follows, as was briefly explained in the proof of \mcite[Theorem 1.1]{FKNP_Fractional_Thresholds}. 
    \newline \indent The definition of spreadness in hypergraphs, as defined in~\cite{FKNP_Fractional_Thresholds}, allows for the hypergraph~$\mathcal{H}$ to have multi-edges (many identical copies of the same edge). Thus, for a finite set $X$, any rational-valued $q$-spread probability measure $\mu$ on $\mathcal{F}$ can be converted to a uniform measure by replacing each set $F \in \mathcal{F}$ with $m!\cdot \mu(F)$ identical copies of~$F$, for some sufficiently large $m$. This generates a multi-hypergraph with a uniform measure of the same spread, and so, \mcite[Theorem 1.6]{FKNP_Fractional_Thresholds} extends to such rational-valued measures. Finally, any measure on a finite set can be slightly perturbed to obtain a rational-valued measure of roughly the same spread, and hence, the result holds true for all measures, as stated in Theorem~\ref{thm:FKNP_spread_measure}.
\end{remark}

For the purpose of our problem, we consider the ground set $X = X_G$ to be the edge set~$E(G)$ of the host graph~$G$. Consequently, the family~$\mathcal{F} = \mathcal{F}_H$ will contain as elements, those subsets of $E(G)$ which form a copy of the graph~$H$ in~$G$. Thus, to apply Theorem~\ref{thm:FKNP_spread_measure}, we need to assign a probability measure~$\mu$ to the set of all copies of~$H$ in~$G$, such that~$\mu$ is~$q$-spread for an aptly chosen~$q$. In order to prove Theorem \ref{thm:Robust_main}, we would require $q = \mathcal{O}(n^{-1/m_1(H)})$, as $\log{\ell(\mathcal{F}_H)} = \mathcal{O}(\log n)$. 

\subsection{Vertex-spread Measures}

For our problem, it is helpful to work with the following notion of \emph{vertex-spread} probability measures, which were specifically designed for working with graph embeddings by Pham, Sah, Sawhney, and Simkin in~\cite{toolkit_robust_thresholds}. Here the probability measures under consideration shall be defined over the set~$\Emb(H,G)$ of all embeddings of~$H$ into~$G$. 

\begin{definition}[$q$-vertex-spread Probability Measure]
    \label{defn:vtx_spread_measure}
    Let~$\mu$ be a probability measure on the set~$\Emb(H,G)$ for graphs~$H$ and~$G$. For $q > 0$, we say that~$\mu$ is a $q$-\emph{vertex-spread probability measure} if for all $k \in \mathbb{N}$, and for every two sequences of distinct vertices ${x_1, x_2, \dots, x_k \in V(H)}$ and ${v_1, v_2, \dots, v_k \in V(G)}$, we have $$\mu \Bigl(\bigl\{ \varphi \in \Emb(H,G) \bigm| \varphi(x_i) = v_i \text{ for all } i \in [k] \bigr\}\Bigr)  \leq \ q^k.$$
\end{definition}

Given such a vertex-spread measure on $\Emb(H,G)$, it is possible to obtain a spread measure on~$\mathcal{F}_H$ as any unlabelled copy of~$H$ in~$G$ corresponds to $|\mathrm{Aut}(H)|$ distinct embeddings of $H$ into $G$, where~$\mathrm{Aut}(H)$ denotes the set of automorphisms of the graph~$H$. The following theorem, a special case of Proposition 1.17 from~\cite{Alp_robustness_spread}, demonstrates this link between the two notions for the given problem and shows that if $\Delta(H) \leq \Delta$, then a $\mathcal{O}(1/n)$-vertex-spread measure on $\Emb(H,G)$ returns a spread measure with the desired spread. We include a short proof for completeness. 

\begin{theorem}
    \label{thm:vtx_spread_implies_spread}
    Let $H$ be an $n$-vertex graph with~$\Delta(H) \leq \Delta$, and~$G$ be any $n$-vertex graph. If there exists a $\mathcal{O}(1/n)$-vertex-spread measure on the set $\Emb(H,G)$ of embeddings of $H$ into $G$, then there exists a $\mathcal{O}(n^{-1/m_1(H)})$-spread measure on the set $\mathcal{F}_H$ of all unlabelled copies of $H$ in $G$.
\end{theorem}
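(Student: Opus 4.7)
The plan is to define $\nu$ on $\mathcal{F}_H$ as the pushforward of the given vertex-spread measure $\mu$ under the natural map $\Emb(H, G) \to \mathcal{F}_H$ sending each embedding $\varphi$ to its image copy $\varphi(H)$ in $G$. This is clearly a probability measure on $\mathcal{F}_H$, and the task is to verify that it is $q$-spread for some $q = \mathcal{O}(n^{-1/m_1(H)})$, where $C$ denotes the constant hidden in the $\mathcal{O}(1/n)$-vertex-spread assumption on $\mu$.

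Fix $S \subseteq E(G)$, and let $F$ denote the subgraph of $G$ with edge set $S$ whose vertex set $V(F)$ consists of all endpoints of edges in $S$; so $F$ has no isolated vertices, and $k := |V(F)| \leq 2|S| = 2\ell$, where $\ell := |S|$. By the definition of the pushforward,
\[\nu\bigl(\langle S \rangle\bigr) = \mu\bigl(\{\varphi \in \Emb(H, G) : S \subseteq \varphi(E(H))\}\bigr).\]
Whenever $\varphi$ satisfies $S \subseteq \varphi(E(H))$, every vertex of $V(F)$ lies in the image of $\varphi$, so the restriction $\sigma := \varphi^{-1}|_{V(F)} \colon V(F) \to V(H)$ is a well-defined injective graph homomorphism from $F$ to $H$. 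A union bound over the (finite) set of such injective homomorphisms, combined with the vertex-spread property of $\mu$ applied to the two sequences of $k$ distinct vertices $\bigl(\sigma(u)\bigr)_{u \in V(F)} \subseteq V(H)$ and $(u)_{u \in V(F)} \subseteq V(G)$, then gives
\[\nu\bigl(\langle S \rangle\bigr) \leq N(F, H) \cdot \bigl(C/n\bigr)^k,\]
where $N(F, H)$ denotes the number of injective homomorphisms from $F$ to $H$.

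The main remaining step, and essentially the only place where some work is needed, is the subgraph-counting bound $N(F, H) \leq \Delta^k n^{k - \ell/m_1(H)}$. If $F$ does not embed into $H$ then $N(F, H) = 0$, so I may assume $F$ is isomorphic to a subgraph of $H$; consequently $m_1(F) \leq m_1(H)$, and so every connected subgraph of $F$ with $k' \geq 2$ vertices and $\ell'$ edges satisfies $\ell'/(k'-1) \leq m_1(H)$. For a single connected component of $F$, on $k_0$ vertices and $\ell_0$ edges, a standard greedy count along a rooted spanning tree, using $\Delta(H) \leq \Delta$, shows that the number of injective homomorphisms of that component into $H$ is at most $n \cdot \Delta^{k_0 - 1}$. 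Since $n^{k_0 - 1 - \ell_0/m_1(H)} \geq 1$, this is in turn at most $\Delta^{k_0} n^{k_0 - \ell_0/m_1(H)}$. Multiplying these bounds over all connected components of $F$ yields $N(F, H) \leq \Delta^k n^{k - \ell/m_1(H)}$.

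Putting the two estimates together gives
\[\nu\bigl(\langle S \rangle\bigr) \leq (\Delta C)^k \cdot n^{-\ell/m_1(H)},\]
and since $k \leq 2\ell$, we have $(\Delta C)^k \leq \bigl((\Delta C)^2\bigr)^\ell$. Hence $\nu(\langle S \rangle) \leq q^{|S|}$ for $q := (\Delta C)^2 n^{-1/m_1(H)} = \mathcal{O}\bigl(n^{-1/m_1(H)}\bigr)$, so $\nu$ is the required spread measure. The only step I expect to be even mildly delicate is the counting bound on $N(F, H)$; the rest is bookkeeping and a single clean application of the vertex-spread hypothesis.
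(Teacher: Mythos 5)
Your proposal is correct and follows essentially the same route as the paper: the same pushforward measure, the same union bound over injective homomorphisms of the graph spanned by $S$ into $H$ (the paper's ``useful pairs''), the same rooted-spanning-tree count giving $n^c\Delta^{|V(S)|-c}$, and the same use of $|S|\le m_1(H)\,(|V(S)|-c)$ together with a ``no isolated vertices'' inequality to absorb the constants into $q^{|S|}$. The only difference is cosmetic bookkeeping: you fold the $m_1$ inequality into the per-component count and use $k\le 2\ell$ at the end, whereas the paper applies both inequalities after combining the count with $(C/n)^m$.
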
 

\begin{proof}
    Let~$H$ and~$G$ be given as above, and let~$\mu$ be a given $(C/n)$-vertex-spread measure on the set of the embeddings of~$H$ into~$G$, for some constant $C > 0$. We may assume that $C > 1$ as a $q$-vertex-spread measure is also $q'$-vertex-spread for $q' \geq q$.  Based on the intuitive relation between the two notions of spreadness, we define a probability measure $\mu'$ on $\mathcal{F}_H$ as follows. 
    $$\mu'\!\bigl(F\bigr) \coloneqq  \mu\bigl(\{ \varphi \in \Emb(H,G) : \varphi(E(H)) = F\, \}\bigr), \text{ for all $F \subseteq E(G)$.}$$
    \indent We show that $\mu'$ is the required $q$-spread measure for $q = C'n^{-1/m_1(H)}$, where $C' = (C^2\Delta)^{1/m_1(H)}$. For this, let~$S \subseteq E(G)$ be given. We need to show that the measure of all copies of~$H$ containing~$S$, given by $\mu'\bigl(\langle S\rangle\bigr) \coloneqq \mu'\bigl(\{F \in \mathcal{F}_H : S \subseteq F\}\bigr)$ is at most $q^{|S|}$. We can assume that~$S$ sits in some copy of $H$ in $G$, for otherwise $\mu'\bigl(\langle S \rangle\bigr) = 0$ and the spread condition holds trivially.  
    
    Let $V(S)$ be the set of vertices that are incident to some edge in $S$. Let $|V(S)| = m$ and let~$S$ form $c$ connected components as a subgraph of $G$. Then, for any $m$-vertex subset $X \subseteq V(H)$, and any bijection $\psi : V(S) \to X$, we define $(X, \psi)$ to be a \emph{useful pair}, if $\psi(S) \subseteq E\bigl(H[X]\bigr)$ --- that is if~$\psi$ maps all elements of~$S$ to some edge in~$H[X]$. 
    
    Observe that if the map $\varphi \in \Emb(H,G)$ induces a copy of~$H$ in~$G$ such that we have $S \subseteq \varphi\bigl(E(H)\bigr)$, then $\varphi$ can be written as an \emph{extension} of a map~$\psi^{-1}$, for some useful $(X, \psi)$ pair: Indeed, given such a map~$\varphi$, take $X = \varphi^{-1}\bigl(V(S)\bigr)$ and $\psi = \varphi^{-1}\big|_{V(S)} : V(S) \to X$. Now, if~$uv$ is an edge in~$S$, then we have that $\varphi^{-1}(uv) \subseteq E(H)$ by our assumption $S \subseteq \varphi\bigl(E(H)\bigr)$, and $\varphi^{-1}(u), \varphi^{-1}(v) \in X$ by our choice of~$X$. Thus, it follows that $\psi(S) = \varphi^{-1}(S) \subseteq E\bigl(H[X]\bigr)$, thereby making $(X, \psi)$ a useful pair as desired. We now have the following upper bound.
    \begin{equation*}
        \begin{split}
            \mu'\bigl(\langle S \rangle\bigr) &= \mu \Bigl(\bigl\{ \varphi \in \Emb(H,G) : S \subseteq \varphi(E(H)) \bigr\}\Bigr)\\
            &\leq \sum_{(X, \psi) \text{ useful}} \mu\Bigl( \bigl\{ \varphi \in \Emb(H,G) : \varphi(x) = \psi^{-1}(x) \text{ for all } x \in X \bigr\}\Bigr)\\ 
            &\leq \sum_{(X, \psi) \text{ useful}} \biggl(\frac{C}{n}\biggr)^{\!m},
        \end{split}
    \end{equation*}
    where the last inequality uses the vertex-spread property for $\mu$ on the set of embeddings for which the images of $m$~vertices have been fixed by the map $\psi^{-1}$. 
    
    Next, we provide an upper bound on the number of useful $(X, \psi)$ pairs by constructing them as follows: First, for each component of~$S$, pick a special root vertex and map it to some distinct vertex in~$V(H)$. This can be done for each of the~$c$ components of~$S$ in at most~$n^c$ ways. Now, iteratively pick an unmapped vertex~$u \in V(S)$ which has a mapped neighbour~$v \in V(S)$. Then, as~$uv$ is an edge in~$S$, by the definition of a useful pair, $\psi(u)\psi(v)$ must form an edge in $H$. As $\Delta(H) \leq \Delta$, there are at most~$\Delta$ options to choose the image~$\psi(u)$ in~$V(H)$. Repeat this until all remaining $m-c$ vertices of $S$ get mapped into~$V(H)$. Thus, with possible over-counting, there are at most $n^c \cdot \Delta^{m-c}$ useful $(X, \psi)$ pairs for $S$. Combining this estimate with the above inequality, we have that  
    $$\mu'\bigl(\langle S \rangle\bigr) \leq n^c \Delta^{m-c}\cdot \bigl(C/n\bigr)^m =  \bigl({C^2\bigr)^{m/2}\bigl(\Delta}/{n}\bigr)^{m-c} \leq \bigl(C^2\Delta/n\bigr)^{m-c},$$
    where in the last inequality, we used the fact that as every vertex in $V(S)$ is adjacent to an edge in $S$, the number of components~$c$ formed by $S$ is at most $m/2$ and hence, $m - c \geq m/2$.
    
    Finally, for each $i \in [c]$, let~$S_i$ be the set of edges of~$S$ in its~$i^{th}$ connected component. Then, the definition of~$m_1(H)$ implies that $|S_i| \leq m_1(H)\cdot(\, |V(S_i)| -1)$. Adding this inequality for each of the~$c$ components of $S$, we get the bound $|S| \leq m_1(H)(m-c)$. Combining this with the above inequality, and for $C' = (C^2\Delta)^{1/m_1(H)}$ and sufficiently large $n$, the measure $\mu'$ is shown to be a $C'n^{-1/m_1(H)}$-spread measure as follows:
    \[ \mu'\bigl(\langle S \rangle \bigr) \leq \big({C^2\Delta}/{n}\big)^{{|S|}/{m_1(H)}} = \big(C'n^{-1/m_1(H)}\big)^{|S|}\,. \qedhere \]
\end{proof}

\subsection{A Vertex-spread Blow-up Lemma}

By Theorem~\ref{thm:vtx_spread_implies_spread}, it follows that in order to apply Theorem~\ref{thm:FKNP_spread_measure}, it suffices to construct a $\mathcal{O}(1/n)$-vertex-spread measure on the set $\Emb(H,G)$.  We show that if the graphs~$G$ and~$H$ are given with reduced graphs, partitions, buffer vertices, and image restrictions that satisfy the conditions of the blow-up lemma (Lemma~\ref{lem:dense_blowup}), then there exists such a $\mathcal{O}(1/n)$-vertex-spread measure on $\Emb(H,G)$. This vertex-spread version of the blow-up lemma is stated as follows. 

\begin{theorem}[Spread Blow-up Lemma] \label{thm:spread_dense_blowup}
  For all $\Delta\ge 2$, $\Delta_{R'} \in \mathbb{N}$, $\alpha, \zeta, d>0$, and ${\kappa>1}$, there exists $\eps, \rho > 0$ such that for all $r_1 > 0$, there exists a constant $C_S > 0$ such that the following holds: 
  
  Let $R$ be a graph on $r\le r_1$ vertices and let $R'\subseteq R$ be a spanning subgraph with $\Delta(R')\leq \Delta_{R'}$. Let $H$ and $G$ be $n$-vertex graphs, given with $\kappa$-balanced, size-compatible vertex partitions $\mathcal{X}=\{X_i\}_{i\in[r]}$ and $\mathcal{V}=\{V_i\}_{i\in[r]}$, respectively, which have parts of size at
  least $n/(\kappa r_1)$. Let 
  $\tilde{\mathcal{X}}=\{\tilde{X}_i\}_{i\in[r]}$ be a family of subsets of $V(H)$, and $\mathcal{I} = \{I_x\}_{x \in V(H)}$ be a family of subsets of $V(G)$.  
  Suppose that
  \begin{enumerate}[label=\itmarab{BL}]
  \item \label{itm:SpBL1} the partition $(G,\mathcal{V})$ is $(\eps,d)$-regular on $R$ and $(\eps,d)$-super-regular on $R'$,
  \item \label{itm:SpBL2} $\Delta(H)\leq \Delta$ and $(H,\mathcal{X})$ is an $R$-partition,
  \item \label{itm:SpBL3} $\tilde{\mathcal{X}}$ is an $(\alpha,R')$-buffer for $(H,\mathcal{X})$, and
  \item \label{itm:SpBL4} $\mathcal{I} = \{I_x\}_{x \in V(H)}$ is a family of $(\rho, \zeta)$-image-restrictions. 
  \end{enumerate}

  Then there exists a $(C_S/n)$-vertex-spread probability measure on the set of all embeddings ${\varphi: V(H) \to V(G)}$ of $H$ into $G$, for which $\varphi(x) \in I_x$ for all $x \in V(H)$.
\end{theorem}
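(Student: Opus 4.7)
The plan is to follow the structure of the proof of the (non-spread) blow-up lemma in \cite{sparse_blowup}, which produces an embedding via a two-phase process: (i) a \emph{random greedy phase} that embeds the non-buffer vertices one at a time, and (ii) a \emph{completion phase} that embeds the remaining buffer vertices by finding a perfect matching in an auxiliary bipartite candidate graph. The idea for the vertex-spread version is to randomise both phases in such a way that, conditional on the history up to each vertex placement, the probability of mapping any vertex to any specific image is $O(1/n)$. Combined via the tower rule, this will deliver the required $(C_S/n)$-vertex-spread bound. We let $\mu$ be the output distribution of this randomised algorithm, plugging in any arbitrary fixed embedding (guaranteed by Lemma~\ref{lem:dense_blowup}) on the $o(1)$-probability failure event to maintain a valid probability measure on $\Emb(H,G)$ respecting the image restrictions.

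For the greedy phase, I would process the non-buffer vertices in an order chosen to respect the structure of $(H,\mathcal{X})$ and $R$, mapping each non-buffer $x \in X_i$ to a uniformly random element of its current \emph{candidate set} $C(x) \subseteq I_x$, defined as the set of images in $V_i$ still compatible with all already-embedded neighbours of $x$ in $G$. By the regularity-inheritance lemmas used throughout \cite{sparse_blowup}, together with \ref{itm:SpBL1}, \ref{itm:SpBL2}, and \ref{itm:SpBL4}, with high probability one has $|C(x)| \ge c|V_i| = \Omega(n)$ throughout the phase, for some constant $c = c(d,\alpha,\zeta,\kappa,r_1)$. Hence, on the good event, each placement contributes a factor $O(1/n)$ to any per-step conditional probability, which is exactly what the tower rule needs.

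The completion phase is the main technical obstacle. In \cite{sparse_blowup}, any single perfect matching in the bipartite candidate graph of each buffer part is used; for the spread version I would instead sample the matching uniformly at random from the set of all perfect matchings of that graph. By the $(\alpha,R')$-buffer property~\ref{itm:SpBL3} and super-regularity on $R'$, after a typical run of the greedy phase the candidate graph on each buffer part is $(\eps',d')$-super-regular for constants depending only on $\eps,d,\alpha,\Delta,\Delta_{R'}$. Standard permanent estimates---Br\'egman's theorem for an upper bound and the Falikman--Egorychev--Schrijver bound for a lower bound---then imply that each edge of the candidate graph lies in an $O(1/|\tilde X_i|) = O(1/n)$ fraction of its perfect matchings. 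Thus each buffer vertex is mapped to any specified image with conditional probability $O(1/n)$, again matching the bound needed by the tower rule.

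To conclude, given distinct $x_1,\dots,x_k \in V(H)$ and $v_1,\dots,v_k \in V(G)$, applying the tower rule in the order in which the $x_i$ are placed by the algorithm yields
\[
\mu\bigl(\{\varphi \in \Emb(H,G) : \varphi(x_i) = v_i \text{ for all } i \in [k]\}\bigr) \le (C_S/n)^k,
\]
provided we absorb the $o(1)$-probability failure event into the constant $C_S$. The hardest parts of the execution will be (a) showing that after a typical run of the greedy phase the candidate graphs on the buffer parts inherit the desired super-regularity with explicit parameters, uniformly over histories, and (b) controlling the $O(1/n)$ edge-frequency bound in a uniformly random perfect matching of a super-regular bipartite graph when $r \le r_1$ is allowed to be large (as opposed to the bounded-size reduced graph setting of \cite{nenadov_pham_spreadblowup}), since accumulation of failure probabilities along many parts must be handled carefully.
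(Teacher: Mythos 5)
Your first phase matches the paper's: the random greedy algorithm already embeds each main vertex uniformly into a candidate set of size $\Omega(n)$, and a tower-rule/induction argument turns this into a $(1/\nu n)^k$ bound. Your completion phase, however, has a genuine gap. You assert that after a typical greedy run the auxiliary candidate graph on each buffer part is $(\eps',d')$-super-regular, and then propose to sample a uniformly random perfect matching and control edge frequencies by permanent estimates. The paper explicitly cannot and does not establish super-regularity of these candidate graphs: what the good-partial-embedding analysis yields is only a two-sided minimum-degree condition \ref{itm:FB1}--\ref{itm:FB2} together with the weaker property \ref{itm:FB3} (for every large $W\subseteq B_i$, few vertices of $A_i$ have few candidates in $W$). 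The edges of $F_i$ are determined by common neighbourhoods of the images $\varphi_m(N_H(x))$, and nothing forces density control on arbitrary large subpairs. Even granting super-regularity, combining Br\'egman with the Falikman--Egorychev--Schrijver bound loses factors of the form $e^{\Theta(n\log d/d)}=n^{\Theta(1)}$ when $d=\Theta(n)$, so the ratio $\mathrm{per}(A_{-S})/\mathrm{per}(A)$ is only bounded by $\mathrm{poly}(n)\cdot(C/n)^{|S|}$, which fails to be $(C'/n)^{|S|}$ for small $|S|$. The paper instead adapts the Pham--Sah--Sawhney--Simkin device: couple a binomial random subgraph of $F_i$ (edge probability $C/\lambda$) with a subgraph in which each vertex picks $C$ random neighbours, show the union $Z$ satisfies Hall's condition with probability at least $1/2$ using \ref{itm:FB1}--\ref{itm:FB3} via a three-case analysis on the size of a violating set, and take any perfect matching inside $Z$; spread then follows because each edge lies in $Z$ with probability $O(1/\lambda)$ and these events are independent across the edges of a matching. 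No permanents and no uniform matching are needed, and no super-regularity is required.

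A second, smaller but real error: you propose to handle the failure event of the greedy phase by placing its mass on a single fixed embedding supplied by Lemma~\ref{lem:dense_blowup}. This destroys vertex-spreadness: that atom has probability $\Omega(\mathds{P}(\fail))$ of fixing all $n$ vertex images simultaneously, which must be at most $(C_S/n)^n$ --- far smaller than any failure probability the blow-up lemma analysis provides. The correct repair, and what the paper does, is to condition on success; since the success probability is at least $1/2$, this only doubles every probability and is absorbed into the constant.
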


Note that the setting and the sufficient conditions required by Theorem~\ref{thm:spread_dense_blowup} are identical to that of the blow-up lemma (Lemma~\ref{lem:dense_blowup}). The only difference is that while Lemma~\ref{lem:dense_blowup} returns a single copy of the graph~$H$ in~$G$, Theorem~\ref{thm:spread_dense_blowup} returns a vertex-spread measure on the set of all embeddings of~$H$ into~$G$. Moreover, this vertex-spread measure is supported on those embeddings in $\Emb(H, G)$ that map vertices of $X_i$ into $V_i$ for all $i \in [r]$ and satisfy the given image restrictions. The proof of Theorem~\ref{thm:spread_dense_blowup} results from a careful
probabilistic analysis of the key steps used in the proof of Lemma~\ref{lem:dense_blowup} and is deferred to Section~\ref{sec:spreadblow}.  

\section{Proof of the Main Technical Theorem}
\label{sec:robust_overview}

In this section, we prove Theorem~\ref{thm:Robust_main}. Let us first make a quick comment on the case when $\Delta(H) \leq 1$. When $\Delta = 1$, the extension threshold~$\delta_{\rm e}(1)$ equals $1/2$ since $\Delta/(\Delta+1) \leq \exth \leq (2\Delta -1)/2\Delta$. Further, every graph~$H$ with $\Delta(H) \leq 1$ is a vertex disjoint union of a matching and a collection of isolated vertices, and hence is contained in any Hamilton cycle of $G$. Thus, Theorem~\ref{thm:Robust_main} for $\Delta = 1$ follows directly from the following theorem on robustness of Hamiltonicity for Dirac Graphs~\cite{robust_hamiltonicity}. Note that $m_1(H) = 1$ for a non-empty graph~$H$ with $\Delta(H) \leq 1$. 

\begin{theorem}[Robust Hamiltonicity~\cite{robust_hamiltonicity}]
\label{thm:Hamilton_robust}
There exists a constant~$C_1 > 0$ such that if~$G$ is an $n$-vertex graph with minimum degree~$\delta(G) \geq n/2$, then the random subgraph~$G(p)$ is Hamiltonian with high probability for $p \geq C_1 n^{-1} \log n$.
\end{theorem}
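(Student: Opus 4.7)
The strategy is to combine P\'osa's rotation--extension technique with sprinkling. Partition the edge-retention probability as $p = p_1 + p_2$ with $p_1 = p_2 \geq (C_1/2)\,n^{-1}\log n$, and couple so that $G(p) \supseteq G_1 \cup G_2$, where $G_1 := G(p_1)$ and $G_2 := G(p_2)$ are independent random subgraphs of $G$. The first layer $G_1$ will provide an expanding backbone; the second layer $G_2$ will supply booster edges to iteratively extend the longest path into a Hamilton cycle. Using Chernoff bounds together with $\delta(G) \ge n/2$, one shows w.h.p.\ that $G_1$ is connected and satisfies P\'osa's expansion condition $|N_{G_1}(S) \setminus S| \geq 2|S|$ for every $S \subseteq V(G)$ with $|S| \leq n/5$. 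The estimate rests on the observation that for any disjoint pair $(S,T)$ with $|S| = s$ and $|T| = 2s$, the minimum degree condition forces at least $s(n/2 - 3s) = \Omega(sn)$ edges of $G$ from $S$ to $V(G) \setminus (S \cup T)$, so the probability that none of them lies in $G_1$ is $\exp(-\Omega(p_1 s n)) = \exp(-\Omega(s \log n))$, easily beating the $\binom{n}{s}\binom{n}{2s}$ union bound.

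Next, invoke P\'osa's rotation--extension lemma: any connected $n$-vertex graph $G'$ satisfying the above expansion is either Hamiltonian or admits a set of $\Omega(n^2)$ \emph{boosters}, i.e.\ non-edges $uv$ such that $G' + uv$ either strictly increases the length of a longest path in $G'$ or yields a Hamilton cycle. These boosters arise from pairs of rotation endpoints of a fixed longest path and, by exploiting $\delta(G) \ge n/2$, a constant fraction of them are edges of $G$. Now further sprinkle $G_2$ into $n$ independent sub-layers, each of density $p_2/n = \Omega(n^{-2}\log n)$, and iterate for at most $n$ phases. At phase $i$ we expose the $i$-th sub-layer and add to the current graph any $G$-edge booster it contains. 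The probability that phase $i$ yields no such booster is $\exp(-\Omega(p_2 n^2 / n)) = \exp(-\Omega(\log n)) = n^{-\Omega(1)}$, which permits a union bound over the $n$ phases. Since each successful phase strictly increases the length of the longest path, after at most $n$ phases the resulting graph contains a Hamilton cycle, and this cycle lies inside $G(p)$.

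The main obstacle is ensuring that a constant fraction of the P\'osa boosters are in fact edges of the host $G$. A naive bound based on rotation endpoint sets of size $n/3$ is insufficient, because $\delta(G) \ge n/2$ does not directly force many $G$-edges between two arbitrary sets of size $n/3$. To circumvent this, one iterates P\'osa rotations further, leveraging the robust expansion of $G_1$ to inflate both endpoint sets past $(1/2 + \eta)n$ for some $\eta > 0$; the minimum degree condition then forces $\Omega(\eta n^2)$ $G$-edges among the booster pairs, as needed for the iterative boosting step to succeed.
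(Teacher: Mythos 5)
You should first note that the paper does not prove Theorem~\ref{thm:Hamilton_robust} at all: it is quoted from Krivelevich, Lee and Sudakov~\cite{robust_hamiltonicity}, so your attempt has to be measured against their (long and delicate) argument rather than anything in this paper. Your outline is the natural first attack --- one sprinkled layer giving P\'osa-type expansion, further layers supplying boosters --- and you correctly isolate the crux: a booster coming from rotation--extension is only usable if it happens to be an edge of the host graph~$G$.

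The proposed resolution of that crux, however, does not work. The step ``iterate P\'osa rotations further, leveraging the robust expansion of $G_1$ to inflate both endpoint sets past $(1/2+\eta)n$'' is impossible: the expansion condition $|N_{G_1}(S)\setminus S|\ge 2|S|$ can never hold for $|S|>n/3$ (since $|N(S)\setminus S|\le n-|S|$), and you only establish it for $|S|\le n/5$. P\'osa's lemma says the set $S$ of rotation endpoints satisfies $|N(S)\setminus S|\le 2|S|-1$, so expansion forces $|S|$ to exceed the expansion threshold, i.e.\ $|S|>n/5$; it provides no mechanism at all for pushing endpoint sets beyond $n/3$, let alone beyond $n/2$. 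And with endpoint sets below $n/2$ the hypothesis $\delta(G)\ge n/2$ really is insufficient: in Dirac graphs close to the extremal configurations (two cliques of order about $n/2$, or a graph close to $K_{n/2,n/2}$) there exist two vertex sets of size $(1/2-o(1))n$ spanning no edge of $G$, so the $\Omega(n^2)$ booster pairs may avoid $E(G)$ entirely. This is exactly the difficulty that makes the Krivelevich--Lee--Sudakov proof substantially more involved (they need additional structural and rotation arguments to manufacture boosters inside $G$), and it cannot be dismissed by ``rotate more.'' The remaining issues are minor and repairable --- the coupling should be via $1-p=(1-p_1)(1-p_2)$ (or monotonicity), the boosters used in phase $i$ depend on the outcomes of earlier phases so fresh randomness must be invoked explicitly, and the union bound over $n$ phases needs failure probability $n^{-c}$ with $c>1$, hence a large constant $C_1$ --- but the endpoint-set inflation step is a genuine gap that breaks the argument as written.
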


We can thus assume that $\Delta(H) \geq 2$. From Section~\ref{sec:robust}, we see that it suffices to show the existence of a $\mathcal{O}(1/n)$-vertex-spread measure on the set $\Emb(H,G)$ using the spread blow-up lemma in order to prove Theorem~\ref{thm:Robust_main}. For indeed, then Theorem~\ref{thm:Robust_main} follows directly from an application of Theorems~\ref{thm:FKNP_spread_measure} and \ref{thm:vtx_spread_implies_spread}. We prove Theorem~\ref{thm:Robust_main} formally towards the end of this section. 

In order to be able to apply Theorem~\ref{thm:spread_dense_blowup} on the graphs~$G$ and~$H$, as given in the setting of Theorem~\ref{thm:Robust_main}, we need to partition and process both graphs in a manner that the sufficient conditions~\ref{itm:SpBL1}--\ref{itm:SpBL4} of Theorem~\ref{thm:spread_dense_blowup} are satisfied. For this application of Theorem~\ref{thm:spread_dense_blowup}, we do not need to impose any image restrictions and so~\ref{itm:SpBL4} will be vacuously satisfied.

Given the graph~$G$, we first fix our underlying reduced graphs~$R$ and~$R'$ on, say, $r$ vertices, and construct a partition~$\mathcal{V} = \{V_i\}_{i \in [r]}$ of $V(G)$ in a manner that satisfies~\ref{itm:SpBL1}. This is done via the following lemma for the graph~$G$, which we prove later in Section~\ref{sec:robust_G_lemma}.

\begin{lemma}[Lemma for $G$]
    \label{lem:G_lemma}
    For all $\Delta \geq 2$ and $\gamma > 0$, there exist $\ d_0 > 0$ and $\kappa > 1$, such that for all $\eps > 0$ and $0 < d < d_0$, there exists $r_0 \in \mathbb{N}$ for which the following holds.
    
    For every graph~$G$ with $\delta(G) \geq (\exth + \gamma )n$, there exist a spanning subgraph~$G' \subseteq G$, a reduced graph~$R$ on $r \leq r_0$ vertices, where $\Delta+1$ divides $r$; a spanning $K_{\Delta+1}$-factor~$R'$ of~$R$; and a partition $\mathcal{V} = \{V_1, V_2, \dots, V_{r}\}$ of $V(G') = V(G)$, with the following properties.
    \begin{enumerate}[label=\itmarab{G}]
    \item \label{itm:G1} The partition~$(G',\mathcal{V})$ is a $\kappa$-balanced partition of~$V(G')$ with $$\frac{n}{\kappa r} \leq \Bigl(1 - \frac{\gamma}{8}\Bigr) \frac{n}{r} \leq |V_i| \leq \frac{\kappa n}{r} \text{ for all $i \in [r]$, }$$
    \item \label{itm:G2}the reduced graph~$R$ has minimum degree $\delta(R) \geq (\exth+ \gamma/4)r$, and
    \item \label{itm:G3} the partition~$(G', \mathcal{V})$ is $(\eps, d)$-regular on~$R$ and $(\eps, d)$-super-regular on~$R'$.
    \end{enumerate}
    \end{lemma}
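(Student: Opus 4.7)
The plan is to derive the desired partition via the Regularity Lemma, extract a $K_{\Delta+1}$-factor on the reduced graph through the Hajnal--Szemer\'edi corollary, make its edges super-regular via Lemma~\ref{lem:super-regularity}, and then redistribute the few leftover vertices back into the parts to obtain a partition of all of~$V(G)$. Given $\Delta$ and $\gamma$, I set $\kappa=2$ and $d_0=\gamma/10$; given $\eps$ and $d<d_0$, I choose $\eps_0\ll\eps$ and $m_0$ large compared to $\Delta/\gamma$ and $1/\eps_0$, and set $r_0=M_0(\eps_0,m_0)$. Applying Theorem~\ref{thm:regularity} with parameters $(\eps_0,d,m_0)$ to $G$ yields a spanning subgraph $G'\subseteq G$ and a partition $V_0\sqcup V_1^{(0)}\sqcup\cdots\sqcup V_m^{(0)}$ satisfying \ref{itm:RL1}--\ref{itm:RL5}. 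By Corollary~\ref{cor:reduced-min-degree} (applied with $c=\exth$) the reduced graph $R^{(0)}$ has $\delta(R^{(0)})\geq(\exth+\gamma/2)m$, and because $\exth\geq\Delta/(\Delta+1)$ (from the tightness of Conjecture~\ref{conj:BEC}), we get $\delta(R^{(0)})\geq(1-1/(\Delta+1))m$. Corollary~\ref{cor:clique_hajnal_szemeredi} then produces a $K_{\Delta+1}$-factor $R'$ of $R^{(0)}$ covering all but a set $U\subseteq[m]$ of size $|U|\leq\Delta$. Let $r=m-|U|$ (so $(\Delta+1)\mid r$), relabel the covered nodes as $[r]$, and set $R=R^{(0)}[\,[r]\,]$. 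For $m_0$ large enough that $m\geq 4\Delta/\gamma$, one checks $\delta(R)\geq(\exth+\gamma/2)m-\Delta\geq(\exth+\gamma/4)r$, establishing~\ref{itm:G2}.

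Next, I apply Lemma~\ref{lem:super-regularity} with $S=R'$ (of maximum degree $\Delta$) to shrink each $V_i^{(0)}$ to a subset $V_i'\subseteq V_i^{(0)}$ of size $(1-\eps_0\Delta)|V_i^{(0)}|$ such that pairs on $R'$ are super-regular and pairs on $R$ stay regular, with only slightly worsened parameters. The set of \emph{leftover} vertices --- those in $V_0$, in $V_j^{(0)}$ for $j\in U$, and in $V_i^{(0)}\setminus V_i'$ for $i\in[r]$ --- has size $O(\eps_0 n)$. For each leftover vertex $v$, the assumption $\delta_G(v)\geq(\exth+\gamma)n$ together with the fact that each $V_k'$ has size $\Theta(n/r)$ implies, by a simple counting argument, that $v$ has at least $(d-\eps_0)|V_k'|$ neighbors in $V_k'$ for all but a bounded (in $\gamma,d,\Delta$) number of $k\in[r]$. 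Hence $v$ is a \emph{good candidate} for all but a bounded number of cliques $K$ of $R'$ --- meaning $v$ has the required density of neighbors into every partner $V_k'$ with $k\in K$ --- leaving $\Omega(r)$ cliques in which it can be placed. A bipartite Hall-type matching (or a balanced greedy assignment) then distributes all leftover vertices so that each $V_i'$ receives at most $O(\eps_0 n/r)$ of them.

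Set $V_i=V_i'\cup L_i$, where $L_i$ is the set of leftovers placed into part~$i$. Property~\ref{itm:G1} follows because $|V_i|\geq|V_i'|\geq(1-\eps_0\Delta)(1-\eps_0)n/m\geq(1-\gamma/8)n/r$ for $\eps_0$ small and $m_0$ large, and $|V_i|\leq 2n/r$. For~\ref{itm:G3}, regularity on $R$ survives the small perturbation $(V_i',V_j')\to(V_i,V_j)$ by Lemma~\ref{lem:robust_regularity} because each $|L_i|/|V_i|$ is tiny; super-regularity on $R'$ is preserved because vertices originally in $V_i'$ keep their large neighborhoods into $V_j'\subseteq V_j$, while the added vertices of $L_i$ were chosen precisely to have the required neighborhood into every partner $V_j'$ of their clique. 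Taking $\eps_0$ small enough relative to $\eps$ and $d$ absorbs all parameter deterioration.

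The main obstacle is the redistribution step: one must show every leftover vertex has sufficiently many good placements to permit a balanced assignment across the $r$ parts. The extra slack $\gamma n$ in the minimum degree of $G$ is essential here, forcing each vertex to have a dense neighborhood into almost every cluster subset $V_k'$. Executing the balanced placement cleanly --- whether via a Hall-type matching argument on the bipartite ``goodness'' graph or via a concentration-based random scheme --- is the remaining technical detail, but it is manageable because each vertex has $\Omega(r)$ good cliques available.
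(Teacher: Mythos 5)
Your route is the same as the paper's: degree form of the regularity lemma plus Corollary~\ref{cor:reduced-min-degree}, Corollary~\ref{cor:clique_hajnal_szemeredi} on the reduced graph to extract the $K_{\Delta+1}$-factor~$R'$, Lemma~\ref{lem:super-regularity} to gain super-regularity on~$R'$, redistribution of the exceptional vertices into ``friendly'' cliques, and Lemma~\ref{lem:robust_regularity} to see that (super-)regularity survives. However, the one step you dismiss as ``a simple counting argument'' is exactly the crux of the lemma, and your version of it is false as stated. You claim that a leftover vertex $v$ has at least $(d-\eps_0)|V_k'|$ neighbours in $V_k'$ for \emph{all but a bounded (in $\gamma,d,\Delta$) number} of $k\in[r]$. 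This is not true: from $\delta(G)\ge(\exth+\gamma)n$ one only gets that the number of ``bad'' parts for $v$ is at most roughly $(1-\exth-\gamma)r/(1-d)$, which is a positive \emph{fraction} of $r$ --- and since $1-\exth$ can be as large as $1/(\Delta+1)$, this fraction can be nearly $r/(\Delta+1)$, i.e.\ nearly as large as the number $c=r/(\Delta+1)$ of cliques in $R'$. So it is not at all automatic that $\Omega(r)$ cliques remain entirely good for $v$; the margin comes entirely from the $+\gamma$ slack together with the lower bound $\exth\ge\Delta/(\Delta+1)$, and the count must be set up so that each bad clique component costs $v$ essentially a full part's worth of potential neighbours. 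This is precisely the ``$v$-friendly component'' computation in the paper, which shows that fewer than $\gamma c$ friendly components would force $d_{G'}(v)<(\exth+\gamma/2)n$, contradicting \ref{itm:RL2}. Your proposal asserts the conclusion ``$\Omega(r)$ good cliques'' from a premise that is wrong, so as written the redistribution step is not justified. (A second, smaller point: the degree condition must be checked in $G'$, not $G$, via \ref{itm:RL2}, since super-regularity is a property of the pairs in $G'$.)

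One further bookkeeping caution: you run the regularity lemma at density parameter $d$ itself and hope that ``taking $\eps_0$ small absorbs all parameter deterioration.'' For plain regularity this is fine under the paper's convention that $(\eps,d)$-regular only demands density $\ge d-\eps$. But you cannot then invoke the ``moreover'' clause of Lemma~\ref{lem:robust_regularity} as a black box for super-regularity, because its hypothesis requires every vertex of $\hat A$ to have at least $d^{\mathrm{prev}}|\hat B|$ neighbours in the \emph{enlarged} set $\hat B$, and the old vertices of $V_i'$ only satisfy this with a $-O(\eps_0)$ deficit once $|\hat B|>|V_j'|$; this is why the paper runs the regularity lemma at density roughly $2d$ and halves the parameter before perturbing. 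Your direct verification of the minimum-degree half of super-regularity (comparing against $(d-\eps)|V_j|$ with $\eps\gg\eps_0$) does work, but you should do it explicitly rather than cite Lemma~\ref{lem:robust_regularity} for it.
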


 Next, we work with the graph~$H$. Given the partition $(G, \mathcal{V})$ of $V(G)$ and the reduced graphs~$R$ and~$R'$, we will construct a partition $\mathcal{X} = \{X_i\}_{i \in [r]}$ of $V(H)$, such that $(H, \mathcal{X})$ is an $R$-partition and is size-compatible with~$(G,\mathcal{V})$. This  satisfies~\ref{itm:SpBL2}. At this stage, while constructing~$(H, \mathcal{X})$, we also define and set aside potential buffer vertices $\tilde{X}_i \subseteq X_i$ in a manner that satisfies \ref{itm:SpBL3}. This is done in the following lemma for~$H$, which we prove later in Section~\ref{sec:robust_H_lemma}.

 \begin{lemma}[Lemma for $H$]
    \label{lem:Lemma_H}
    For all $\Delta \geq 2$, $\gamma > 0$, and $\kappa > 1$, there exists a constant $\alpha > 0$, such that for all $\ell \geq \Delta+1$, the following holds. 
    
    Given an $n$-vertex graph~$G$, let~$R$ be an $r$-vertex reduced graph for~$G$ with ${\delta(R) \geq (\exth+ \gamma)r}$ and with a $K_{\ell}$-factor~$R'$ of~$R$. Let $\mathcal{V} = \{V_i\}_{i \in [r]}$ be a partition of $V(G)$ with parts of size $(1 -\gamma/2 ){n}/{r} \leq |V_i| \leq \kappa n/r$ for all $i \in [r]$. Let $H$ be an $n$-vertex graph with $\Delta(H) \leq \Delta$.
    
    Further, suppose that $\mathcal{X}^* = \{X^*_i\}_{i \in [r]}$ is a collection of pairwise disjoint subsets  of $V(H)$ of size $|X^*_i| \leq \alpha |V_i|$ and such that $\bigl(H[X^*], \mathcal{X}^*\bigr)$ is an $R$-partition, where $X^* \coloneqq \bigcup_{i \in [r]} X_i^*$. Then there exists a partition $\mathcal{X} = \{X_{i}\}_{i \in [r]}$ of $V(H)$, and subsets $\tilde{X}_i \subseteq X_i$ with the following properties. 
    \begin{enumerate}[label=\itmarab{H}]
    \item \label{itm:H1} The partition~$(H, \mathcal{X})$ is size-compatible with the partition~$(G,\mathcal{V})$,
    \item \label{itm:H2} the partition $(H, \mathcal{X})$ is an $R$-partition, 
    \item \label{itm:H3} the family of subsets~$\tilde{\mathcal{X}} = \{\tilde{X}_i\}_{i \in [r]}$ is an $(\alpha, R')$-buffer for $(H, \mathcal{X})$, and
    \item \label{itm:H4} for all $i \in [r]$, we have $X_i^* \subseteq X_i$ and $X_i^* \cap \tilde{X}_i = \emptyset$.
    \end{enumerate}
\end{lemma}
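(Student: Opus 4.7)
The plan is to recast the construction of $(\mathcal{X}, \tilde{\mathcal{X}})$ as a graph embedding problem in an auxiliary host graph, which will be solved using the extension property of Definition~\ref{defn:extension_threshold}. Let $G^*$ be the blow-up of $R$ with respect to the weights $i \mapsto |V_i|$, so $V(G^*) = \bigsqcup_{i \in [r]} V_i$ has exactly $n$ vertices, and two distinct vertices are adjacent in $G^*$ iff they lie in blow-ups $V_i, V_j$ with $ij \in E(R)$. Using $\delta(R) \ge (\exth+\gamma)r$, $|V_j| \ge (1-\gamma/2)n/r$, and $\exth < 1$, one checks that $\delta(G^*) \ge (\exth + \gamma/4)n$ for $\gamma$ sufficiently small. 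Any embedding $\varphi \colon V(H) \to V(G^*)$ is then automatically a bijection, and the preimages $X_i := \varphi^{-1}(V_i)$ form a partition witnessing \ref{itm:H1} and \ref{itm:H2} simultaneously. Let $\eta > 0$ be the constant furnished by Definition~\ref{defn:extension_threshold} applied with parameter $\gamma/4$; I will fix $\alpha = \alpha(\Delta, \kappa, \eta)$ small enough that the remaining requirements \ref{itm:H3} and \ref{itm:H4} can be enforced by pinning $\varphi$ in advance on a controlled set $S \subseteq V(H)$ of size at most $\eta n$.

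The partial embedding $\varphi_S \colon S \to V(G^*)$ is assembled in two stages. First, map each $v \in X_i^*$ to an arbitrary point of $V_i$; since $(H[X^*], \mathcal{X}^*)$ is an $R$-partition, this already embeds $H[X^*]$ into $G^*$. Next, greedily select a \emph{root set} $B \subseteq V(H)$ of exactly $\alpha \kappa n$ vertices at pairwise $H$-distance at least $6$ and at $H$-distance at least $4$ from $X^*$; each chosen root forbids at most $\Delta^5+1$ further vertices via its $H$-distance-$5$ ball, and the vertices at $H$-distance at most $3$ from $X^*$ number at most $|X^*|\Delta^3 = O(\alpha n)$, so such a $B$ exists for $\alpha$ small. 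Partition $B$ into sets $B_1, \ldots, B_r$ with $|B_i| = \alpha\kappa n/r$ and earmark $B_i$ for part $i$. For each root $v \in B_i$, let $K^j$ be the unique clique of $R'$ containing $i$; using $\ell \ge \Delta+1$, take a proper $(\Delta+1)$-colouring of the bounded-degree graph $H[B(v)]$, where $B(v) := N_H[N_H[v]]$, in which $v$ receives the colour routed to $i$. Choose any $\Delta$ further nodes of $K^j$ to host the remaining colour classes, doing so in a balanced way across the $\ell-1$ possibilities, and place each vertex of $B(v)$ at an unused point of the blow-up matching its colour. Because the balls $\{B(v)\}_{v\in B}$ and $X^*$ are pairwise vertex-disjoint and pairwise non-$H$-adjacent by our distance separation, the combined map $\varphi_S$ is an embedding of $H[S]$ into $G^*$; moreover $|S| \le |X^*| + |B|(\Delta^2+1) \le \alpha \kappa n (\Delta^2+2) \le \eta n$ for $\alpha$ small.

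Invoking the extension property produces an embedding $\varphi \colon V(H) \to V(G^*)$ extending $\varphi_S$. Setting $X_i := \varphi^{-1}(V_i)$ and $\tilde{X}_i := B \cap X_i$ yields the desired objects: \ref{itm:H4} is immediate, and for \ref{itm:H3} every $v \in \tilde{X}_i$ has its entire closed second-neighbourhood $B(v)$ lying inside a single $K_\ell$-clique of $R'$, which forces the first and second $H$-neighbourhoods of $v$ into parts joined to $X_i$ along edges of $R'$; finally $|\tilde{X}_i| = \alpha\kappa n/r \ge \alpha |V_i| = \alpha|X_i|$ since $|V_i| \le \kappa n/r$. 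The main obstacle is the triple balancing act required of $\alpha$: it must be small enough so that (i) $|S| \le \eta n$ to apply the extension property, (ii) the greedy selection of $B$ succeeds after discarding the $H$-distance-$3$ neighbourhood of $X^*$, and (iii) the pre-embedded points inside each $V_i$ can be chosen distinct. For (iii), the key point is that each ball $B(v)$ deposits its non-$v$ vertices into only $\Delta$ of the $\ell-1$ non-$i$ parts of its containing clique; spreading these $\Delta$-subsets in a balanced way over the $\ell - 1$ options bounds the total contribution to any single $V_i$ by some $\Delta$-dependent constant times $\alpha \kappa n/r$, a bound independent of $\ell$. All three constraints then reduce to explicit upper bounds on $\alpha$ depending only on $\Delta$, $\kappa$, and $\eta$, consistent with the quantifier order of the lemma.
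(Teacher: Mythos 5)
Your proposal is correct and follows essentially the same strategy as the paper's proof: blow up $R$ into an $n$-vertex auxiliary host, pre-embed $X^*$ together with the closed second neighbourhoods of well-separated designated buffer vertices into single cliques of $R'$, and complete the embedding via the extension property of $\exth$. The only differences are implementation details (greedy selection of the root set and a proper colouring with balanced clique-node assignment, versus the paper's two applications of the Hajnal--Szemer\'edi theorem), together with harmless arithmetic undercounts such as $\Delta^5+1$ in place of $1+\Delta+\dots+\Delta^5$.
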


We wish to make two quick remarks here. First, the presence of the small $R$-partition $\mathcal{X}^*$ in the lemma above, while not required in the proof of Theorem~\ref{thm:Robust_main}, has been added for its utility in dealing with image restrictions which for instance, has been used in our companion result on the local resilience for containment of bounded degree graphs \cite{ABKN_resil}. 

Secondly, and more significantly, the proof of Lemma~\ref{lem:Lemma_H} is the only part in the entire proof of Theorem~\ref{thm:Robust_main} where the extension property of the extension threshold~$\exth$ is used. This application arises from the need to obtain a size-compatible $R$-partition of~$V(H)$, which builds on a good partial embedding of carefully chosen potential buffer vertices. 

We are now ready to prove Theorem~\ref{thm:Robust_main}, modulo Lemmas~\ref{lem:G_lemma} and~\ref{lem:Lemma_H} stated above.

\begin{proof}[Proof of Theorem \ref{thm:Robust_main}]
    Let $\Delta \in \mathbb{N}$ and $\gamma > 0$ be given, and let $G$ and $H$ be two $n$-vertex graphs that satisfy the conditions of Theorem~\ref{thm:Robust_main}. If $\Delta = 1$, then Theorem~\ref{thm:Robust_main} follows directly from Theorem~\ref{thm:Hamilton_robust} with $C = C_1$. Thus, we may assume that $\Delta \geq 2$. We may further assume that the graph~$H$ has at least~$n/4$ edges, since otherwise, the graph~$H$ has at least~$n/2$ isolated vertices and hence, adding a matching of~$n/4$ edges on this set of isolated vertices does not alter the value of~$m_1(H)$.    
    
    Let $d_0 > 0$ and $\kappa >1$ be constants returned by the Lemma for~$G$ (Lemma~\ref{lem:G_lemma}) with~$\Delta$ and ~$\gamma$ as input. 
    Next, with input~$\Delta$,~$\gamma/4$, and~$\kappa$, let~$\alpha > 0$ be the constant returned by the Lemma for~$H$ (Lemma~\ref{lem:Lemma_H}). 
    Then, with input $\Delta$, $\alpha$, $\kappa$ as above, and with $d = d_0/2$, $\Delta_{R'} = \Delta$, and $\zeta = 1$, the spread blow-up lemma (Theorem~\ref{thm:spread_dense_blowup}) returns constants $\eps, \rho > 0$. 
    
    With input $\eps$ and $d$, Lemma~\ref{lem:G_lemma} returns a constant~$r_0 \in \mathbb{N}$ such that for the given graph $G$, there exist a spanning subgraph $G' \subseteq G$, reduced graphs~$R$ and~$R'$ with $r \leq r_0$ vertices, and a partition $\mathcal{V} = \{V_i\}_{i \in [r]}$ of $V(G')$ such that \ref{itm:G1}--\ref{itm:G3} hold. With this choice of $r_0$, Theorem~\ref{thm:spread_dense_blowup} also returns a constant $C_S > 0$ for which the conclusion of Theorem~\ref{thm:spread_dense_blowup} holds.
    
    By Lemma~\ref{lem:G_lemma}, $R'$ is a  $K_{\Delta+1}$-factor of the $r$-vertex reduced graph~$R$. Moreover, by \ref{itm:G1} and \ref{itm:G2}, the graph~$R$ has minimum degree $\delta(R) \geq (\exth+\gamma/4)r$ and each part~$V_i$ of the partition~$\mathcal{V}$ has size in the range $(1 - \gamma/8)n/r \leq |V_i| \leq \kappa n/r$. 
    Thus, for the graph~$H$, an application of Lemma~\ref{lem:Lemma_H}, with $\ell = \Delta+1$ and $X_i^* = \emptyset$ for all $i \in [r]$, returns a partition~$\mathcal{X} = \{X_i\}_{i\in [r]}$ of $V(H)$ and subsets $\tilde{X}_i$ of $X_i$, such that they satisfy~\ref{itm:H1}--\ref{itm:H4}. Finally set $I_x = V_i$ for all $x\in X_i$ and for all $i \in [r]$.  

    We are now in the setting of Theorem~\ref{thm:spread_dense_blowup}: Clearly,~$R'$ is a spanning subgraph of~$R$ with $\Delta(R') = \Delta = \Delta_{R'}$. By~\ref{itm:G1} and~\ref{itm:H1}, $\mathcal{V}$ and $\mathcal{X}$ are size-compatible $\kappa$-balanced partitions of $V(G')$ and $V(H)$ respectively, with each part of size at least $n/\kappa r \geq n/\kappa r_0$. Further, all conditions of Theorem~\ref{thm:spread_dense_blowup} are met. Indeed, \ref{itm:SpBL1}--\ref{itm:SpBL3} follow directly from~\ref{itm:G3}, \ref{itm:H2}, and \ref{itm:H3}, respectively; and \ref{itm:SpBL4} holds vacuously as there are no image-restricted vertices in $H$. Hence, by an application of Theorem~\ref{thm:spread_dense_blowup}, there exists a $(C_S/n)$-vertex-spread measure on the set $\Emb(H,G)$.

    Set~$X = E(G)$ and let~$\mathcal{F}_H$ be the family of subsets of~$E(G)$ that form unlabelled copies of~$H$ in~$G$. By Theorem~\ref{thm:vtx_spread_implies_spread}, the $(C_S/n)$-vertex-spread measure constructed above implies the existence of a $q$-spread measure on $\mathcal{F}_H$ with $q \leq C^*n^{-1/m_1(H)}$ for some constant~$C^*$. Let~$K$ be the universal constant from Theorem~\ref{thm:FKNP_spread_measure} and define $C \coloneqq 2KC^*$ to be the constant required by Theorem~\ref{thm:Robust_main}. Let $p \geq Cn^{-1/m_1(H)}\log n$ be given. Note that for the family~$\mathcal{F}_H$, the maximum size of a minimal set~in $\mathcal{F}_H$ is bounded above by $\ell(\mathcal{F}_H) \leq |E(H)| \leq n^2$. Thus, as $q \leq C^*n^{-1/m_1(H)}$ and as $\log\ell(\mathcal{F}_H) \leq 2\log(n)$, we have that $$p \geq Cn^{-1/m_1(H)}\log n \geq K \bigl(C^*n^{-1/m_1(H)}\bigr)\cdot 2\log n \geq K q \log\ell(\mathcal{F}_H).$$
    As $n/4 \leq  E(H)\leq \ell(\mathcal{F}_H) \leq n^2$, it follows that~$\ell(\mathcal{F}_H)$ tends to infinity with~$n$. Thus, by Theorem~\ref{thm:FKNP_spread_measure}, the random subgraph~$G(p)$ contains some copy of the graph~$H$ with probability tending to~$1$ as $\ell(\mathcal{F}_H)$ (and hence~$n$) tends to infinity, as required. 
\end{proof}

\section{Proof of the Spread Blow-up Lemma}
\label{sec:spreadblow}

In this section, we prove our vertex-spread version of the blow-up lemma (Theorem~\ref{thm:spread_dense_blowup}).

As mentioned earlier, the blow-up lemma from \cite{sparse_blowup} is a special case of a more general version for sparse graphs. The authors of \cite{sparse_blowup} proved three separate versions of the sparse blow-up lemma, each of which is applicable in different settings, such as for embedding into random graphs or $(p, \beta)$-bijumbled pseudorandom graphs. Moreover, it is possible to recover Lemma~\ref{lem:dense_blowup} from each of these versions by setting $p = 1$ in their respective statements. 

The proof of Theorem~\ref{thm:spread_dense_blowup}, which is deferred to the end of this section, relies heavily on the proof of these sparse blow-up lemmas applied with $p = 1$. The random greedy algorithm and the technique of embedding buffer vertices in those proofs are useful to obtain the required vertex-spread measure, and we will explain in this section why this is the case. 
For our proof of Theorem~\ref{thm:spread_dense_blowup}, we shall use the version of the sparse blow-up lemma for $(p, \beta)$-bijumbled pseudorandom graphs~\mcite[Lemma 1.25]{sparse_blowup}. Using the proof for this version (as opposed to the other two  versions) has primarily two advantages: The random greedy algorithm embeds vertices of $H$ into $G$ in a fixed order, and the buffer vertices set aside to be used in the second stage of the proof remain constant and unaffected during the first stage of the embedding (the random greedy algorithm).

We have endeavoured to keep this section as self-contained as possible by outlining all the required details and ideas from the proof of the blow-up lemma, Lemma~\ref{lem:dense_blowup} (referred to henceforth as `the proof'). However, we shall keep this brief and the reader is welcome to refer to~\cite{sparse_blowup} for  further explanations: The introductory definitions and the proof of the sparse blow-up lemma for bijumbled graphs can be found in \mcite[Chapter 1]{sparse_blowup} and \mcite[Chapter 4]{sparse_blowup}, respectively. We shall start by outlining the proof of Lemma~\ref{lem:dense_blowup}. 

Henceforth, we shall assume that all sufficient conditions for Lemma~\ref{lem:dense_blowup} (and hence for Theorem~\ref{thm:spread_dense_blowup}) are satisfied for some given graphs~$G$ and $H$, reduced graphs~$R$ and $R'$, partitions~$(G, \mathcal{V})$ and $(H, \mathcal{X})$, and family of subsets~$\tilde{\mathcal{X}} = \{\tilde{X}_i\}_{i \in [r]}$ and $\mathcal{I} = \{I_x\}_{x \in V(H)}$. 
The proof begins by pre-processing both the input graphs~$G$ and~$H$. 
The vertices of~$H$ are partitioned into two parts, namely the \emph{main} and the \emph{buffer} vertices, denoted by~$X^{\main}$ and~$X^{\buf}$, respectively. 
The buffer vertices are equitably and uniformly chosen from the collection of potential buffer vertices $\tilde{\mathcal{X}} = \{\tilde{X}_i\}$, 
such that $|X^{\buf} \cap \tilde{X}_i| = \mu|X_i|$, for some constant $\mu$ fixed during the proof. On the other hand, the vertices of~$X^{\main}$ are assigned an ordering, which stays fixed during the entire embedding algorithm, 
and defines the order in which vertices of~$X^{\main}$ are embedded into~$G$. 

The embedding is done in a manner so that the vertices of $X_i$ are embedded only to vertices in~$V_i$ (more specifically to $I_x \subseteq V_i$). To simplify discussion, we use the notation $V(x) \coloneqq V_i$ for all $x \in X_i$ and for all $i \in [r]$.
Now, the proof embeds the vertices of~$V(H)$ into~$V(G)$ in two steps: First, the vertices in~$X^{\main}$ are embedded into~$V(G)$ one-by-one according to the fixed ordering using a \emph{random greedy algorithm} (RGA). Then, in the second step, the proof considers an auxiliary bipartite graph where each $x \in X^{\buf}$ is made adjacent to all of its \emph{available candidate} vertices in $V(G)$. This auxiliary bipartite graph can be shown to satisfy Hall's matching theorem, and the obtained perfect matching is used to complete the embedding. 

\subsubsection*{The First Step: The Random Greedy Algorithm (RGA)} 

Let $x_1, x_2, \dots, x_m$ be the fixed ordering on $X^{\main}$ mentioned above, and let $\varphi_t$ denote the partial embedding of~$H$ into~$G$ constructed until time~$t$ that maps the vertices $x_1, \dots, x_t$ into~$V(G)$ such that $H\bigl[\{x_1, \dots, x_t\}\bigr]$ is a subgraph of $G\bigl[\im(\varphi_t)\bigr]$. 
Further, for $i \in [m]$, let $N^{<}_H(x_i)$ denote the neighbours of~$x_i$ in~$H$ which precede~$x_i$ in the ordering. 
Then at any time~$t \leq m$, the algorithm embeds the vertex $x_t$ to some vertex~$v \in V(x_t) \cap I_{x_t}$, which is available to be embedded to and is a common neighbour of the vertices $\varphi_{t-1}(N_H^{<}(x_t))$. If this set of \emph{available candidate vertices} is too small, then the algorithm halts with failure. 

The proof of the blow-up lemma shows that halting with failure is unlikely. Roughly speaking, this holds true as the graph~$H$ provides limited restriction to candidate vertices due to its bounded degree, and as the somewhat large size of the set-aside buffer vertices $|X^{\buf}| = \mu|V(H)|$ allows for the set of available vertices to stay large till time $t = m$. True to its name, at each time step $t \leq m$, the \emph{random greedy algorithm (RGA)} simply picks an image for~$x_t$ from its large set of available candidate vertices, uniformly at random, to extend the partial embedding~$\varphi_{t-1}$ to~$\varphi_t$. More precisely, the following was shown to be true regarding the RGA \mcite[Claim 4.4]{sparse_blowup} (rephrased for $p=1$).
    
\begin{enumerate}[start = 3, label = \itmarab{INV}]
    \item \label{itm:INV3_old} At each time $t$ in the running of the Random Greedy Algorithm, when we embed $x_t$ to create $\varphi_t$, we do so uniformly at random into a set of size at least $\frac{1}{10}\mu \zeta |V(x_t)|$.
\end{enumerate}
Here, $\mu$ and $\zeta$ are some small positive constants which arise in the proof of the blow-up lemma. Note that by assumption on the sizes of the parts in the partition $\mathcal{V}$ in Lemma \ref{lem:dense_blowup}, we have that $|V(x)| \geq |V(G)|/\kappa r_1$. Hence, \ref{itm:INV3_old} can be restated as follows. 

\begin{enumerate}[start = 1, label = \itmarab{RGA}]
    \item \label{itm:INV3} At each time $t \leq m$, the random greedy algorithms embeds $x_t$ uniformly at random into a set of size at least $2\nu n $, where $\nu = \mu \zeta / 20 \kappa r_1$.
\end{enumerate}

We shall use this property to show that the RGA embeds vertices of $X^{\main}$ into $V(G)$ in a $(1/\nu n)$-vertex-spread manner. More precisely, the RGA produces a distribution on $\Emb(H[X^\main],G) \cup \{\fail\}$, where $\{\fail\}$ denotes the event that the RGA was not able to embed all vertices of $X^{\main}$ and halted with failure, or that the resulting partial embedding~$\varphi_m$ does not have the desirable properties~\ref{itm:GPE3},~\ref{itm:PRGA2}, and~\ref{itm:PRGA3} as introduced in the following subsection. These properties of $\varphi_m$ are useful during the second stage of embedding buffer vertices, and we include them in the event $\{\fail\}$ for convenience. 

It is shown in the proof of~\cite[Lemma~4.1]{sparse_blowup} that the probability of $\{\fail\}$ tends to zero as~$n$ tends to infinity. In particular, this means that with probability at least~$1/2$, the RGA produces an embedding of~$H[X^{\main}]$ into~$G$ with the property~\ref{itm:INV3}, along with the other desirable properties~\ref{itm:GPE3},~\ref{itm:PRGA2}, and~\ref{itm:PRGA3}. We now argue that conditioning on this success of the RGA gives us a vertex-spread distribution on the partial embeddings as required. 

\begin{lemma}
    \label{lem:v_main_vtx_spread}
    For any $k \leq m$, let $x_{t_1}, x_{t_2}, \dots, x_{t_k}$ be $k$ distinct vertices of $X^{\main}$ with $0 = t_0 < t_1 < \dots < t_k \leq m$, and let $v_{t_1}, v_{t_2}, \dots, v_{t_k}$ be distinct vertices in $V(G)$. Then, conditioned on the success of the RGA, the random embedding $\varphi_m$ of $H[X^{\main}]$ into $G$ generated by the RGA is such that 
    \[ \mathds{P}\Bigl(\bigl\{ \varphi_m \in \Emb(H[X^{\main}], G): \varphi_m (x_{t_i}) = v_{t_i} \text{ for all } i \in [k]\bigr\}\Bigr) \leq \Bigl( 1/{\nu n}\Bigr)^k.\]
\end{lemma}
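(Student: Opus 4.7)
The strategy is a chain-rule argument using property \ref{itm:INV3}, coupled with the observation that conditioning on the (likely) event of success inflates probabilities by at most a bounded factor. The factor of $2$ slack between the lower bound $2\nu n$ in \ref{itm:INV3} and the denominator $\nu n$ in the conclusion is precisely engineered to absorb this cost, which is why there is no real obstacle here.

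Let $\mathcal{S}$ denote the event that the RGA succeeds, so that $\mathds{P}(\mathcal{S}) \geq 1/2$ by the discussion preceding the lemma. Write $E_i \coloneqq \{\varphi_{t_i}(x_{t_i}) = v_{t_i}\}$ and $E \coloneqq \bigcap_{i=1}^k E_i$, and note that $E_i$ implicitly requires the RGA not to halt before time $t_i$, since otherwise $\varphi_{t_i}(x_{t_i})$ is undefined.

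The first step is to establish the unconditional bound $\mathds{P}(E) \leq \bigl(1/(2\nu n)\bigr)^k$. Writing the probability via the chain rule as
\[
\mathds{P}(E) = \prod_{i=1}^k \mathds{P}\bigl(E_i \,\bigm|\, E_1 \cap \cdots \cap E_{i-1}\bigr),
\]
one bounds each factor using \ref{itm:INV3}: conditional on any history compatible with $E_1 \cap \cdots \cap E_{i-1}$, either the RGA has already halted (in which case $E_i$ is impossible and the factor is $0$), or $\varphi_{t_i}(x_{t_i})$ is sampled uniformly from a set of size at least $2\nu n$, and $v_{t_i}$ is chosen with probability at most $1/(2\nu n)$. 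Multiplying the $k$ resulting bounds yields the claim.

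The final step then combines this with the lower bound on $\mathds{P}(\mathcal{S})$:
\[
\mathds{P}(E \mid \mathcal{S}) = \frac{\mathds{P}(E \cap \mathcal{S})}{\mathds{P}(\mathcal{S})} \leq \frac{\mathds{P}(E)}{\mathds{P}(\mathcal{S})} \leq 2 \cdot \bigl(1/(2\nu n)\bigr)^k = 2^{1-k} \bigl(1/(\nu n)\bigr)^k \leq \bigl(1/(\nu n)\bigr)^k
\]
for every $k \geq 1$, which is exactly the asserted vertex-spread bound.
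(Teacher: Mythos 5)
Your proposal is correct and follows essentially the same route as the paper: the paper's inductive Claim with the conditioning on the random vector $H_k$ of intermediate images is just an explicit rendering of your chain-rule decomposition, each factor bounded by $1/(2\nu n)$ via \ref{itm:INV3}, and the final step of dividing by $\mathds{P}(\mathcal{S})\ge 1/2$ to absorb the factor of $2$ is identical. No gaps.
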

\begin{proof}
    Suppose the random greedy algorithm successfully generates the partial embedding~$\varphi_m$ of~$H[X^{\main}]$ into~$G$, and let $x_{t_1}, x_{t_2}, \dots, x_{t_k} \in X^{\main}$ and $v_{t_1}, v_{t_2}, \dots, v_{t_k} \in V(G)$ be given. 
    \begin{claim}
    \label{clm:RGA_vtx_spread}
        For the random partial embedding~$\varphi_{t_k}$ up to time~$t_k$, we have 
        \[ \mathds{P}\Bigl(\bigl\{ \varphi_{t_k}\bigm| \varphi_{t_k}(x_{t_i}) = v_{t_i} \text{ for all } i \in [k]\bigr\}\Bigr) \leq \Bigl( 1/2\nu n \Bigr)^k.\]
    \end{claim}

    \begin{claimproof}
    We prove the claim by induction on~$k$. The claim holds trivially for $k = 0$. Now, suppose the statement is true for $k-1$.  
    Define~$\mathcal{A}_i$ for $i \in [k]$ to be the event that the random embedding $\varphi_{t_i}$ maps $x_{t_j} \mapsto v_{t_j}$ for all $j \in [i]$. 
    By the induction hypothesis, then, we have $\mathds{P}(\mathcal{A}_{k-1}) \leq (1/2\nu n)^{k-1}$. 
    Define $H_k = (v_{t_{k-1}+1}, \dots, v_{t_k-1})$ to be the random vector of vertices in $V(G)$ to which the vertices $x_{t_{k-1}+1}, \dots, x_{t_k-1}$ are embedded by the RGA. Then, as $\mathcal{A}_k \subseteq \mathcal{A}_{k-1}$, we have
    \begin{eqnarray*}
        \mathds{P}\bigl(\mathcal{A}_k\bigr) &=& 
        \smashoperator{\sum_{h_k \in\, \supp(H_k)}}
        \;\mathds{P}\bigl(\varphi_{t_k}(x_{t_k}) = v_{t_k} \bigm| H_k = h_k;  \mathcal{A}_{k-1}\bigr)
        \cdot\mathds{P}\bigl(H_k = h_k \bigm| \mathcal{A}_{k-1}\bigr)\cdot \mathds{P}\bigl(\mathcal{A}_{k-1}\bigr)\\
        &\leq& \frac{1}{2\nu n}\cdot \biggl(\frac{1}{2\nu n}\biggr)^{k-1} \cdot \smashoperator{\sum_{h_k \in \, \supp(H_k)}} \; \mathds{P}\bigl(H_k = h_k \bigm| \mathcal{A}_{k-1}\bigr) = \biggl(\frac{1}{2\nu n}\biggr)^k,
    \end{eqnarray*}
    where the sum is taken over the support of the random vector $H_k$, that is, over all realisations~$h_k$ of~$H_k$ for which $\mathds{P}(H_k = h_k) >0$; and the inequality uses the fact that because of~\ref{itm:INV3}, at time~$t_k$ the RGA embeds $x_{t_k}$ to $v_{t_k}$ uniformly with probability at most $1/(2\nu n)$ if $v_{t_k}$ is an available candidate vertex for $x_{t_k}$, and with probability $0$ otherwise. 
\end{claimproof}
    
    Now, let $\mathcal{A}$ be the event that the random embedding $\varphi_m$ maps $x_{t_j} \mapsto v_{t_j}$ for all $j \in [k]$ and let $H = (v_{t_{k}+1}, \dots, v_m)$ be the random vector of vertices in $V(G)$ to which the vertices~$x_{t_k+1}, \dots, x_{t_m}$ are embedded by the RGA. We calculate $\mathds{P}(\mathcal{A})$ by an application of Claim \ref{clm:RGA_vtx_spread} as follows:
    \[\mathds{P}\bigl(\mathcal{A}\bigr) = \smashoperator{\sum_{h \in\, \supp(H)}}\;\mathds{P}\bigl(H = h \bigm| \mathcal{A}_k \bigr)\cdot \mathds{P}\bigl(\mathcal{A}_k\bigr) \leq \biggl(\frac{1}{2\nu n}\biggr)^{\!k}\cdot\; \smashoperator{\sum_{h \in \, \supp(H)}}\;\mathds{P}\bigl(H = h \bigm| \mathcal{A}_{k}\bigr) = \biggl(\frac{1}{2\nu n}\biggr)^{\!k}.\]
    \indent Finally, we would like to estimate the probability of~$\mathcal{A}$ conditioned on the RGA not failing. Since the probability of failure is at most~$1/2$, and at most all of~$\mathcal{A}$ is outside the event of failure, the conditioned probability is at most $2\mathds{P}(\mathcal{A})\le2({1}/{2\nu n})^{k}\le({1}/{\nu n})^{k}$, as required.
    \end{proof}

\subsubsection*{The Second Step: Embedding Buffer Vertices by Hall's Theorem} 

Upon successful completion of the RGA with a partial embedding~$\varphi_m$, it remains to map the buffer vertices~$X^{\buf}$ to the remaining available vertices of $V(G)$. For this section, we regard~$\varphi_m$ as a fixed embedding and show that whatever it is, we obtain the required spread distribution on mappings of the buffer vertices.

For each $i \in [r]$, let~$X_i^{\buf}$ denote the buffer vertices in the part~$X_i$, and let~$V_i^{\buf} \coloneqq V_i \setminus \im(\varphi_{m})$ be the available vertices in~$V_i$. By size-compatibility of the partitions~$\mathcal{X}$ and~$\mathcal{V}$, we have that $|X^{\buf}_i| = |V^{\buf}_i|$ for all $i \in [r]$. For each $x \in X^{\buf}_i$, let $C^{\buf}(x)$ denote the set of available candidate vertices for~$x$. Here, $C^{\buf}(x)  \coloneqq N^*_G\bigl(\varphi_m(N_H(x; X^{\main}); V_i^{\buf}\bigr)$ is defined as the set of vertices in the set $V^{\buf}_i$ that are adjacent to all images under $\varphi_m$ of the already embedded neighbours of $x$ in $X^{\main}$. As the RGA did not fail, it was shown in~\mcite[Lemma 4.1]{sparse_blowup} that the partial embedding~$\varphi_m$ generated so far is a \emph{good partial embedding}. This means that among other useful properties, the following is true (see Definition 2.24 and the first line in the proof of Claim 4.2 in~\cite{sparse_blowup}, recalling that $p=1$).

\begin{enumerate}[start = 3, label = \itmarab{GPE}]
    \item \label{itm:GPE3} For each $x \in X^{\buf}_i$, the candidate set~$C^{\buf}(x)$ has size at least $\tfrac{1}{2}\mu d^b |V_i|$.
\end{enumerate}

Here,~$b$ is a positive integer at most~$\Delta$, the constant~$d$ is the input minimum density of regular pairs, and~$\mu$ is a constant arising in the proof that denotes the fraction of buffer vertices (set aside by the proof) in $X_i$. Further, as the RGA did not fail, the following hold for the sets~$V^{\buf}$ and~$X^{\buf}$ at the end of the random greedy algorithm (rephrased for $p = 1$), as proved in~\mcite[Lemma 4.1]{sparse_blowup}. 

\begin{enumerate}[start = 2, label = \itmarab{PRGA}]
    \item\label{itm:PRGA2} Every vertex in~$V_i$ is a candidate for at least $\mu (d^{\Delta}/100)^b |X_i|$ vertices of~$X^{\buf}_i$, and
    \item\label{itm:PRGA3} for every set~$W \subseteq V_i$ of size at least~$\rho|V_i|$, there are at most~$\rho|X_i|$ vertices in $X^{\buf}_i$ with fewer than~$\tfrac{1}{2} d^b |W|$ candidates in $W$.
\end{enumerate}

Here $\rho>0$ is a constant arising in the proof, such that $\rho < 10^{-6}\mu d^b$. In order to complete the embedding of buffer vertices in a vertex-spread manner, we consider the following auxiliary bipartite graph~$F$ with vertex set~$X^{\buf} \sqcup V^{\buf}$ and such that $xv \in E(F)$ if and only if $v \in C^{\buf}(x)$. It is easy to see that the set of embeddings of~$H[X^{\buf}]$ into~$G[V^{\buf}]$ that embed~$H$ into~$G$ along with the map~$\varphi_m$ form a one-to-one correspondence with the set of perfect matchings in the bipartite graph~$F$. Thus, a~$\mathcal{O}(1/n)$-spread measure on the set of perfect matchings in~$F$ naturally gives rise to the required~$\mathcal{O}(1/n)$-vertex-spread measure on the set of embeddings of~$H[X^{\buf}]$ into~$G[V^{\buf}]$. We show this correspondence formally in the proof of Theorem~\ref{thm:spread_dense_blowup} below. 

We now prove the existence of a $\mathcal{O}(1/n)$-spread measure on the set of perfect matchings in~$F$. Note that, as for each $x \in X^{\buf}_i$ the candidate set~$C^{\buf}(x)$ is a subset of~$V_i$, the graph~$F$ is a vertex-disjoint union of bipartite graphs~$\{F_i\}_{i \in [r]}$ with vertex sets~$A_i \sqcup B_i \coloneqq X^{\buf}_i \sqcup V^{\buf}_i$. We show in the proof of Lemma~\ref{lem:buf_spread_measure} that in order to find the required spread measure on the set of perfect matchings in~$F$, it suffices to construct a $\mathcal{O}(1/n)$-spread measure on the set of perfect matchings in~$F_i$ for each of these bipartite subgraphs~$F_i$. The properties~\ref{itm:GPE3},~\ref{itm:PRGA2}, and~\ref{itm:PRGA3} can be restated for the subgraphs~$F_i$ as follows.

\begin{enumerate}[label=\itmarab{FB}]
    \item \label{itm:FB1} For each $x \in A_i$, we have $d_{F_i}(x) \geq  \frac{1}{2} d^b |B_i|$,
    \item \label{itm:FB2} for each $v \in B_i$, we have $d_{F_i}(v) \geq  (d^{\Delta}/100)^b |A_i|$, and
    \item \label{itm:FB3}for every set $W \subseteq B_i$ of size at least $\frac{\rho}{\mu}|B_i|$, there are at most $\frac{\rho}{\mu}|A_i|$ vertices in $A_i$ with fewer than $\frac{1}{2} d^b |W|$ candidates in $W$.
\end{enumerate}

In \cite{toolkit_robust_thresholds}, Pham, Sah, Sawhney, and Simkin show that if a bipartite graph on $A\sqcup B$ is $(\eps, d)$-super-regular, then there exists a $\mathcal{O}(1/n)$-spread measure on the set of perfect matchings (in fact, they prove a more general statement about star factors). Unfortunately, the graphs~$F_i$ are not necessarily super-regular and instead satisfy~\ref{itm:FB3}, which is different from the consequence of regularity used in~\cite{toolkit_robust_thresholds}. Nevertheless, we can modify their approach to work in this setting as well.

The proof begins by generating a random bipartite subgraph of $F_i$ by picking for each vertex in $V(F_i)$, a random subset of~$C$ neighbours chosen uniformly at random with replacement for some large constant~$C$, together with a random binomial subgraph of $F_i$. Let~$Z$ be the union of both these random subgraphs. We then show that with probability at least~$1/2$, the graph~$Z$ satisfies Hall's Theorem and that this implies the existence of the required spread measure on perfect matchings of $F_i$. To do this, we look at sets~$S \subseteq A_i$ and~$T\subseteq B_i$ of sizes~$k$ and~$k-1$ respectively, and show that the probability of the event~$N_Z(S) \subseteq T$ is very small. The proof then divides into two cases: when $k$ is very small or large, and when $k$ takes non-extreme values. We work with properties of the uniform random subgraph in the first case and the binomial random subgraph in the latter, both of which are unified via the graph~$Z$. In order to analyse the random subgraph~$Z$, we define~$Z$ in terms of the following coupling.

\begin{definition}
    \label{defn:coupled_measure}
    Let $F_i = (A_i \sqcup B_i, E(F_i))$ be a bipartite graph and let~$C$ be a large constant. Let~$Z_1$ be the \emph{binomial random subgraph} of~$F_i$ generated by retaining each edge of~$E(F_i)$ independently with probability~$C/\lambda$, where $\lambda = |V(F_i)|$. Similarly, let~$Z_2$ be the \emph{uniform random subgraph} of~$F_i$ obtained by choosing~$C$ neighbours uniformly at random with replacement for each~$v \in V(F_i)$. Let~$\mathds{P}_{Z_1}$ and~$\mathds{P}_{Z_2}$ denote the probability measures on~$Z_1$ and~$Z_2$ respectively. 
    
    Then, we define $Z = Z_1 \cup Z_2$ to be a random subgraph of~$F_i$ with the probability measure 
    $$ \mathds{P}_Z(Z) \coloneqq \sum_{(Z_1, Z_2)} \mathds{1}_{Z = Z_1 \cup Z_2}(Z_1, Z_2) \cdot  \mathds{P}_{Z_1}(Z_1)\, \mathds{P}_{Z_2}(Z_2),$$ where the \emph{indicator function} $\mathds{1}_A(\mathbf{x})$ equals $1$ if $A(\mathbf{x})$ is true, and $0$ otherwise.
\end{definition} 
\begin{obs}
    \label{obs:coupling_prob}
    It can be easily verified that $\mathds{P}_Z$ is a probability measure. Further, if $\mathcal{E}$ is a monotone decreasing property on graphs, then $\mathds{P}_Z(Z \in \mathcal{E}) \leq \min\big\{ \mathds{P}_{Z_1}(Z_1 \in \mathcal{E}), \mathds{P}_{Z_2}(Z_2 \in \mathcal{E})\big\}$. Indeed, if $H \subseteq G$, then $\mathds{1}_{\mathcal{E}}(G) \leq \mathds{1}_{\mathcal{E}}(H)$ as~$\mathcal{E}$ is decreasing, and hence for $i \in \{1, 2\}$, we have
    \begin{eqnarray*}
        \mathds{P}_Z(Z \in \mathcal{E}) &=& \sum_{(Z_1, Z_2)} \mathds{1}_{\mathcal{E}}(Z_1 \cup Z_2) \cdot  \mathds{P}_{Z_1}(Z_1) \mathds{P}_{Z_2}(Z_2) \leq \sum_{(Z_1, Z_2)} \mathds{1}_{\mathcal{E}}(Z_i) \cdot \mathds{P}_{Z_1}(Z_1) \mathds{P}_{Z_2}(Z_2)\\
        &\leq& \sum_{Z_i} \Bigl(\mathds{1}_{\mathcal{E}}(Z_i) \cdot \mathds{P}_{Z_i}(Z_i)\cdot \sum_{Z_{3-i}} \mathds{P}_{Z_{3-i}}(Z_{3-i})\Bigr) = \mathds{P}_{Z_i}(Z_i \in \mathcal{E}). 
    \end{eqnarray*}   
\end{obs}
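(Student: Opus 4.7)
The observation has two parts, and my plan is to address them separately, each by an elementary manipulation of the definition of the coupled measure.

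For the first claim, that $\mathds{P}_Z$ is a probability measure on subgraphs of $F_i$, I would first note non-negativity, which is immediate since $\mathds{P}_Z(Z)$ is a sum of products of non-negative quantities. To verify that the total mass is~$1$, I would sum $\mathds{P}_Z(Z)$ over all subgraphs $Z$ of $F_i$ and swap the order of summation with the sum over pairs $(Z_1, Z_2)$. The key observation is that for each fixed pair $(Z_1, Z_2)$, the indicator $\mathds{1}_{Z = Z_1 \cup Z_2}$ is $1$ for exactly one value of $Z$. Hence the double sum collapses to $\sum_{(Z_1, Z_2)} \mathds{P}_{Z_1}(Z_1)\,\mathds{P}_{Z_2}(Z_2)$, which, since $\mathds{P}_{Z_1}$ and $\mathds{P}_{Z_2}$ are themselves probability measures, factors as $\bigl(\sum_{Z_1}\mathds{P}_{Z_1}(Z_1)\bigr)\bigl(\sum_{Z_2}\mathds{P}_{Z_2}(Z_2)\bigr) = 1\cdot 1 = 1$.

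For the second claim, the main idea is monotonicity together with the inclusion $Z_i \subseteq Z_1 \cup Z_2$ for both $i \in \{1,2\}$. Since $\mathcal{E}$ is monotone decreasing, $Z_1 \cup Z_2 \in \mathcal{E}$ forces $Z_i \in \mathcal{E}$, so pointwise $\mathds{1}_{\mathcal{E}}(Z_1 \cup Z_2) \leq \mathds{1}_{\mathcal{E}}(Z_i)$ for each $i$. Substituting this bound into the expression $\mathds{P}_Z(Z \in \mathcal{E}) = \sum_{(Z_1, Z_2)} \mathds{1}_{\mathcal{E}}(Z_1 \cup Z_2)\,\mathds{P}_{Z_1}(Z_1)\,\mathds{P}_{Z_2}(Z_2)$ and factoring the resulting sum using Fubini over the independent product, the marginal mass of $\mathds{P}_{Z_{3-i}}$ sums to~$1$ and what remains is exactly $\mathds{P}_{Z_i}(Z_i \in \mathcal{E})$. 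Applying this for both $i = 1$ and $i = 2$ yields the bound by the minimum.

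There is no real obstacle here; the whole argument is a routine unfolding of definitions. The only small point that requires mild care is conceptual: one should be clear that $Z_1$ and $Z_2$ are produced from independent sampling procedures (binomial and uniform-with-replacement), so that the joint measure on $(Z_1, Z_2)$ is the product measure, which is what allows the factorisation in both parts. Everything else is elementary.
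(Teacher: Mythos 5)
Your proposal is correct and follows essentially the same route as the paper: the normalisation check collapses the double sum using the fact that each pair $(Z_1,Z_2)$ determines a unique union, and the comparison with $\mathds{P}_{Z_i}$ uses the pointwise bound $\mathds{1}_{\mathcal{E}}(Z_1\cup Z_2)\le\mathds{1}_{\mathcal{E}}(Z_i)$ from monotonicity together with factorisation over the product measure. Your remark that the independence of the two sampling procedures is what licenses the factorisation matches the structure of Definition~\ref{defn:coupled_measure} exactly.
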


With this, we are now ready to prove the existence of a $\mathcal{O}(1/n)$-spread measure on the set of perfect matchings in the auxiliary bipartite graph~$F$.

\begin{lemma}
    \label{lem:buf_spread_measure}
    Let $F$ be the auxiliary bipartite graph described above, which is a vertex-disjoint union of bipartite graphs~$\{F_i\}_{i \in [r]}$ having vertex sets~$A_i \sqcup B_i = X^{\buf}_i \sqcup V^{\buf}_i$ and satisfying properties \ref{itm:FB1}--\ref{itm:FB3}. Then, there is a constant~$C_B$ independent of $n$, where $n = |V(G)| = |V(H)|$, such that there exists a $(C_B/n)$-spread measure on the set of perfect matchings of $F$.    
\end{lemma}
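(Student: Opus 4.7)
My plan is to reduce to one component and then take a product. Since $F = \bigsqcup_{i\in[r]} F_i$ is a vertex-disjoint union, every perfect matching of $F$ decomposes uniquely into perfect matchings of the $F_i$, and each $|V(F_i)|$ is of order $\Theta(n/r) = \Theta(n)$ because $r \leq r_1$ is bounded and the original parts $|X_i|, |V_i|$ are of this order. If each $F_i$ admits a $(C'/|V(F_i)|)$-spread measure $\mu_i$ on its perfect matchings with a uniform constant $C'$, then the product measure $\mu = \prod_i \mu_i$ is supported on perfect matchings of $F$ and, for any $S \subseteq E(F)$ split as $S_i = S \cap E(F_i)$, satisfies
\[
\mu(\{M : S \subseteq M\}) = \prod_i \mu_i(\{M_i : S_i \subseteq M_i\}) \leq \prod_i \Bigl(\frac{C'}{|V(F_i)|}\Bigr)^{|S_i|} \leq \Bigl(\frac{C_B}{n}\Bigr)^{|S|}
\]
for a suitable absolute constant $C_B$. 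It therefore suffices to build a $(C'/\lambda)$-spread measure on perfect matchings of a single $F_i$, where I write $\lambda = |V(F_i)|$, $A = A_i$, and $B = B_i$.

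For a single $F_i$, I would define the random matching $M$ via the coupling of Definition~\ref{defn:coupled_measure}: sample $Z = Z_1 \cup Z_2$ and, if $Z$ has a perfect matching, output (say) the lexicographically smallest one, otherwise fall back on a fixed perfect matching of $F_i$. The first step is to show that $Z$ satisfies Hall's condition with probability at least $1/2$, via a union bound over $(S,T)$ with $S \subseteq A$, $T \subseteq B$, $|S| = k$, $|T| = k-1$, split over $k$ into three regimes. For $k$ bounded by a large constant $k_0$, I would use $Z_2$ together with \ref{itm:FB1}: each $a \in S$ draws $C$ uniform $F_i$-neighbours from a set of size $\geq \tfrac{1}{2}d^b|B|$, of which at most $k-1$ lie in $T$, so the probability that all $Ck$ picks fall in $T$ is at most $(O(k/|B|))^{Ck}$, which survives the $\binom{|A|}{k}\binom{|B|}{k-1}$ pairs for $C$ large. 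For $k \geq |A| - k_0$ the complement $B \setminus T$ has bounded size, and by \ref{itm:FB2} each of its vertices has linearly many $F_i$-neighbours in $A$, almost all of them in $S$, so their $C$ uniform picks in $Z_2$ hit $S$ except with probability $o(1)$. For intermediate $k$, I would invoke \ref{itm:FB3}: if $|T|/|B| \geq \rho/\mu$, then at most $(\rho/\mu)|A|$ vertices of $A$ have fewer than $\tfrac{1}{2}d^b|B\setminus T|$ $F_i$-neighbours outside $T$, and for the remaining "good" vertices in $S$ (a positive fraction of $S$) the binomial $Z_1$ with parameter $C/\lambda$ finds an outside-$T$ neighbour except with exponentially small probability.

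The main obstacle will be the spreadness estimate itself, which I would tackle by adapting the Pham--Sah--Sawhney--Simkin strategy from~\cite{toolkit_robust_thresholds} to this non-super-regular setting. Given a fixed matching $S \subseteq E(F_i)$, the binomial sample $Z_1$ contains $S$ with probability exactly $(C/\lambda)^{|S|}$; one would argue that conditional on $S \subseteq Z$, the graph $Z$ still satisfies Hall's condition with probability bounded away from zero (re-running the three-regime argument on $Z \cup S$, using $\Delta(H) \leq \Delta$ to bound the perturbation) and hence $Z$ admits a perfect matching containing $S$. The delicate point is the final selection step: one must ensure that the matching-extraction procedure does not disproportionately favour matchings containing $S$. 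A standard workaround is to sample $M$ uniformly from $\mathrm{PM}(Z)$ and use a permanent/switching argument to bound the ratio $|\mathrm{PM}(Z) \cap \langle S \rangle| / |\mathrm{PM}(Z)|$ by $(O(1))^{|S|}$, or equivalently to re-sample the edges adjacent to $V(S)$ independently from the rest of $Z$. Combining all these bounds gives $\Pr[S \subseteq M] \leq (C'/\lambda)^{|S|}$ whenever $S$ is a matching (and $0$ otherwise), and the product construction from the first paragraph then yields the desired $(C_B/n)$-spread measure on perfect matchings of $F$.
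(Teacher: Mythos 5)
Your overall architecture matches the paper's: the reduction to a single component $F_i$ via the product measure, the coupled random subgraph $Z=Z_1\cup Z_2$ of Definition~\ref{defn:coupled_measure}, and the three-regime union bound for Hall's condition (small $k$ via $Z_2$ and \ref{itm:FB1}, large $k$ via $Z_2$ and \ref{itm:FB2}, intermediate $k$ via $Z_1$ and \ref{itm:FB3}) are all exactly what the paper does. One caveat on the regimes: as stated they do not cover all of $[\lambda]$ --- \ref{itm:FB3} applies only when $|B\setminus T|\ge(\rho/\mu)|B|$ and leaves a positive fraction of $S$ ``good'' only when $k\gg(\rho/\mu)\lambda$, so the two $Z_2$-based arguments must be pushed up to $k\le(2\rho/\mu)\lambda$ and down to $k\ge(1-\rho/\mu)\lambda$ rather than to a constant $k_0$; your small-$k$ computation does extend that far, so this is repairable.

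The genuine gap is the spreadness step. Your plan --- condition on $S\subseteq Z$, argue that $Z$ still has a perfect matching through $S$, and then control the bias of the extraction rule via a permanent/switching bound on $|\mathrm{PM}(Z)\cap\langle S\rangle|/|\mathrm{PM}(Z)|$ --- attacks the wrong inequality (a lower bound on the existence of matchings containing $S$) and delegates the actual upper bound to an unproved and genuinely delicate ratio estimate for a sparse random bipartite graph. None of this is needed: since the output $M$ is always a subgraph of $Z$, one has the deterministic containment $\{S\subseteq M\}\subseteq\{S\subseteq Z\}$, so it suffices to bound $\mathds{P}_Z(S\subseteq Z)$. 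For a matching $S$ the events $\{e\in Z\}$, $e\in S$, are mutually independent ($Z_1$ is edge-wise independent and the $Z_2$-choices of distinct vertices are independent), and by \ref{itm:FB1} and \ref{itm:FB2} each has probability at most $C/\lambda+2C/(d^b\lambda)+(100/d^{\Delta})^bC/\lambda=O(1/\lambda)$; hence $\mathds{P}(S\subseteq M)\le(O(1)/\lambda)^{|S|}$ no matter how a perfect matching is extracted from $Z$. Relatedly, your fallback of outputting a \emph{fixed} perfect matching of $F_i$ when $Z$ fails Hall's condition breaks spreadness: the failure probability is only shown to be at most $1/2$ (the Case-II contribution is merely $O(\lambda^{-1/3})$), so every edge of the fallback matching could be output with probability far exceeding $O(1/\lambda)$. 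The correct move is to output $\fail$ and condition on success; since success has probability at least $1/2$, this conditioning at most doubles the bound above.
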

\begin{proof}
    It suffices to prove the existence of a $(C_B/ n)$-spread measure on the set of perfect matchings of~$F_i$ for each $i \in [r]$. Indeed, suppose such a spread measure~$\mathds{P}_i$ exists for each~$F_i$. Let $S \subseteq E(F)$ be given with $S_i = S \cap E(F_i)$. Similarly, let~$M$ be a random perfect matching in~$F$ with $M_i = M \cap E(F_i)$. Then, as $F_i$ are vertex-disjoint bipartite graphs, the product measure $\mathds{P}_M \coloneqq \prod_{i \in [r]} \mathds{P}_i$ gives the required $(C_B/n)$-spread measure on the set of perfect matchings of $F$ as 
    \[\mathds{P}_M\bigl(\{M : S \subseteq M\}\bigr) = \prod_{i \in [r]} \mathds{P}_i\bigl(\{M_i : S_i \subseteq M_i\}\bigr) \leq \prod_{i \in [r]} \bigl( C_B/n \bigr)^{|S_i|} = \bigl( C_B/n\bigr)^{|S|}.\]
    \indent Thus, we restrict ourselves to the graph~$F_i$ for some $i \in [r]$
    having parts~$A_i$, $B_i$ of size~$\lambda$. By the assumptions on the size of~$V_i$ in the statement of Theorem~\ref{thm:spread_dense_blowup}, we have that $\lambda = |X^{\buf}_i| = \mu|X_i| \geq (\mu/\kappa r_1)n$. Let~$C$ be a large constant depending on $d$, $b$, $\kappa$, $\mu$, $\rho$, and $\Delta$, and let $(Z_1, \mathds{P}_{Z_1})$, $(Z_2, \mathds{P}_{Z_2})$, and $(Z, \mathds{P}_Z)$ be the random graphs and probability measures as in Definition~\ref{defn:coupled_measure}. 
    
    We show that the random subgraph~$Z$ satisfies Hall's theorem with probability at least~$1/2$. For this, given a positive integer~$k \in [\lambda]$, let $S \subseteq A_i$ and $T \subseteq B_i$ be vertex sets of size $|S| = k$ and $|T| = k-1$, and define~$\mathcal{A}_k$ to be the event that for some such~$S$ and~$T$, we have $N_Z(S) \subseteq T$ (that is, $S$ and $T$ witness that Hall's condition is violated). We begin by calculating the probabilities $\mathds{P}_Z(N_Z(S) \subseteq T)$ and then use them to bound the probability~$\mathds{P}_Z(\mathcal{A}_k)$ of the event $\mathcal{A}_k$ from above. 

    \noindent \textbf{Case I: $k \in \mathcal{I} \coloneqq \bigl( (2\rho/\mu)\lambda, ( 1 - \rho/\mu)\lambda\bigr)$.} For this case we rely solely on~$Z_1$. Consider the complement set $T^c = B_i\setminus T$ which has size $|T^c| > (\rho/\mu)\lambda$. Then, by~\ref{itm:FB3}, all but at most $(\rho/\mu)\lambda$ vertices of~$S$ have at least~$(d^b/2)|T^c|$ neighbours in~$T^c$ in the graph~$F_i$. Let this subset of vertices of~$S$ be denoted by~$S'$, where $|S'| \geq k - (\rho/\mu)\lambda \geq k/2$. 
    
    Hence, the number of edges in~$F_i$ between~$S'$ and~$T^c$ is at least $(k/2)\cdot(d^b/{2})|T^c| \geq (d^b\rho/4\mu) \lambda k$. Then, shifting to the binomial random graph~$Z_1$, which has edge probability $C/\lambda$, and using Observation~\ref{obs:coupling_prob} for the event $\mathcal{E} \coloneqq \{Z: N_Z(S) \subseteq T\}$, we have 
    \begin{eqnarray*}
        \mathds{P}_Z\bigl(N_Z(S) \subseteq T\bigr) &\leq& \mathds{P}_{Z_1}\!\bigl(N_{Z_1}\!(S) \subseteq T\bigr) \leq \mathds{P}_{Z_1}\!\bigl(N_{Z_1}\!(S') \subseteq T\bigr) = \mathds{P}_{Z_1}\!\bigl(E_{Z_1}\!(S', T^c) = \emptyset \bigr)\\
        &\leq& \bigl(1 - C/\lambda\bigr)^{|E_{F_i}(S', T^c)|} \leq \bigl(1 - C/\lambda\bigr)^{(d^b \rho/{4 \mu}) \lambda k} \leq e^{-C_{1}k},
    \end{eqnarray*}
    where $C_1 = (d^b \rho/4 \mu)C$. We let $C$ be large enough so that $C_1$ satisfies $e^{2-C_1}(\mu/ \rho)^2 < 1/2$. Now, by a union-bound argument and using $\binom{\lambda}{k} \leq (e\lambda/k)^k$ for large $\lambda$, we have    
    \begin{eqnarray*}
        \sum_{k \in \mathcal{I}} \mathds{P}_Z(\mathcal{A}_k) &\leq& \sum_{k \in \mathcal{I}} \binom{\lambda}{k}\binom{\lambda}{k-1} e^{-C_1k} \leq \smashoperator[r]{\sum_{k > (2\rho/\mu)\lambda}}\;\; \bigl({2e\lambda}/{k}\bigr)^{2k} e^{-C_1k}\\
        &\leq& \smashoperator{\sum_{k > (2\rho/\mu)\lambda}}\;\; \bigl(\mu^2 e^{2-C_1}/\rho^2\,\bigr)^{k} \;\leq \smashoperator[r]{\sum_{k \geq (2\rho/\mu)\lambda}}\;\; 2^{-k} \;\leq 2^{-(2\rho/\mu)\lambda} < \frac{1}{4}.
    \end{eqnarray*}
    
    \noindent \textbf{Case II: $k \leq (2\rho/\mu) \lambda$.} For this case, we rely solely on~$Z_2$. By~\ref{itm:FB1}, any vertex~$x \in S$ has degree~$d_{F_i}(x) \geq (d^b/2)\lambda$. As $|T| < k$, the probability that the event $N_{Z_2}(x) \subseteq T$ holds in~$Z_2$ is at most $\bigl(2k/d^b\lambda\bigr)^C$, which is much smaller than $1$ as $10^6\rho < d^b \mu$. Hence, for $1 \leq k \leq (2\rho/\mu)\lambda$, we have 
    \begin{eqnarray*}
        \mathds{P}_{Z}(\mathcal{A}_k) &\leq& \binom{\lambda}{k}\binom{\lambda}{k-1} \mathds{P}_Z\bigl(N_Z(S) \subseteq T\bigr) \leq \bigl(2e\lambda/{k}\bigr)^{\!2k} \,\mathds{P}_{Z_2}\bigl(N_{Z_2}(S) \subseteq T\bigr) \\
        &\leq& \biggl(\frac{2e\lambda}{k}\biggr)^{\!2k} \biggl(\frac{2k}{d^b\lambda}\biggr)^{\!Ck} = {C_2}^k\cdot \bigl(k/\lambda\bigr)^{\!(C-4)k},
    \end{eqnarray*}
    where $C_2 = 4e^2 (2/d^b)^C$. Let $C_* = C_2\cdot (2\rho/\mu)^C$, and let $C$ be large enough so that $C_* < 1$. This is possible as $4\rho < \mu d^b$. Then, letting $\mathcal{I}'$ be the interval $(\lambda^{1/3}, (2\rho/\mu)\lambda)$ and taking a union-bound over all values of $k \leq (2\rho/\mu)\lambda$, we obtain the following. 
    \begin{eqnarray*}
        \smashoperator{\sum_{k \leq (2\rho/\mu)\lambda}}\; \mathds{P}_Z(\mathcal{A}_k) &=& \sum_{k \leq \lambda^{1/3}} {C_2}^k \bigl(k/\lambda\bigr)^{k(C-4)} + \sum_{k \in \mathcal{I}'} {C_2}^k \bigl( k/\lambda\bigr)^{k(C-4)}\\
        &\leq& \sum_{k \leq \lambda^{1/3}} {\Bigl(C_2\, \lambda^{-2(C-4)/3} \Bigr)}^k + \sum_{k \in \mathcal{I}'} {\Bigl(C_2\, \bigl(2\rho/\mu\bigr)^C \Bigr)}^k \\
        &\leq& \sum_{k \leq \lambda^{1/3}} C_2\, \lambda^{-2/3} + \sum_{k \geq \lambda^{1/3}} {C_*}^k\; \leq\;  C_2\, \lambda^{-1/3} + (1- C_*)^{-1}{C_*}^{(\lambda^{1/3})} =  o_\lambda(1)
    \end{eqnarray*}
    Thus, for large values of $\lambda$, we have $\sum_{k \leq (2\rho/\mu)\lambda} \mathds{P}_Z(\mathcal{A}_k) < 1/8$. 

    \textbf{Case III: $k \geq  \big(1 - \rho/\mu\big)\lambda$.} In the random subgraph~$Z$, the event~$N_Z(S) \subseteq T$ holds if and only if we have $N_Z(T^c) \subseteq S^c$. Further, we also have $|T^c| \leq (2\rho/\mu)\lambda$ and $|S^c| = |T^c| - 1$. This resembles Case II, and so using~\ref{itm:FB2} and following the calculations as above, \emph{mutatis mutandis}, we have $\sum_{k \geq (1 - \rho/\mu)\lambda} \mathds{P}_Z(\mathcal{A}_k) < 1/8$ for large values of $\lambda$. 

    Hence, we have shown that the random graph~$Z$ fails Hall's theorem with probability at most $\sum_{k \in [\lambda]} \mathds{P}_Z(\mathcal{A}_k) \leq 1/2$, and thus with probability at least~$1/2$, the random subgraph~$Z$ contains a perfect matching. For each instance of~$Z$ which contains a perfect matching, arbitrarily fix one. This defines a random variable taking values in the set of perfect matchings of~$F_i$ together with an element~$\{\fail\}$, where $\{\fail\}$ is chosen whenever~$Z$ does not satisfy Hall's theorem. Let  $\mathds{P}_i^*$ denote the distribution of this random variable, for which the probability~$\mathds{P}_i^*\bigl(\{\fail\}\bigr)$ was shown to be at most~$1/2$. We will now show that by conditioning on not choosing the element~$\{\fail\}$, we obtain the required spread-distribution~$\mathds{P}_i$ on the set of perfect matchings of~$F_i$. 
    
    For this, let $S \subseteq E(F_i)$ be given. We can assume that~$S$ is a (not necessarily perfect) matching, for otherwise, $\mathds{P}_i\bigl(\{M : S \subseteq M\}\bigr) = 0$ and so, the spread condition is trivially satisfied. Thus, let $S$ be a matching. Observe that $\mathds{P}_i^*\bigl(\{M:S\subseteq M\}\bigr)\le\mathds{P}_Z\bigl(\{Z:S\subseteq Z\}\bigr)$ as~$M$ was chosen in~$Z$. We will bound the latter probability from above. Since~$S$ is a matching,~$Z_1$ is a binomial random subgraph, and since we selected edges of $Z_2$ independently with replacement, the events that the various edges of~$S$ are in~$Z$ are mutually independent. So let~$e \in S$ be an edge in the matching. 
    
    This edge~$e$ lies in~$Z_1$ with probability~$C/\lambda$. Similarly, the edge~$e$ lies in~$Z_2$ if either end point choses the other end point, which has probability at most $2C/d^b\lambda + (100/d^{\Delta})^b C/\lambda$. Thus for $C' = C(200/d^{\Delta})^b$, we have
    \[\mathds{P}_Z\bigl(\{Z:e\in E(Z)\}\bigr)\le \mathds{P}_{Z_1}\bigl(\{Z_1:e\in E(Z_1)\}\bigr)+\mathds{P}_{Z_2}\bigl(\{Z_2:e\in E(Z_2)\}\bigr)\le 2C'/\lambda\,.\]
    
    By independence, we have $\mathds{P}_Z\bigl(\{Z:S\subseteq Z\}\bigr)\le (2C'/\lambda)^{|S|}$, and hence this is also an upper bound on $\mathds{P}_i^*\bigl(\{M:S\subseteq M\}\bigr)$. Since the probability~$P_i^*\bigl(\{\mathrm{fail}\}\bigr)$ is at most~$1/2$, in the conditioned distribution~$\mathds{P}_i$ we have $\mathds{P}_i\bigl(\{M:S\subseteq M\}\bigr)\le 2(2C'/\lambda)^{|S|}\le (4C'/\lambda)^{|S|}$. Finally, as $\lambda = \mu |V_i| \geq \mu n/ \kappa r_1$, setting $C_B = 4\kappa r_1 C'/\mu$, it follows that $\mathds{P}_i\big(\{M: S \subseteq M\}\big) \leq (C_B/n)^{|S|}$. 
    
    Thus, for all $i \in [r]$, the probability measure~$\mathds{P}_i$ is a $(C_B/n)$-spread on the set of perfect matchings of~$F_i$, and consequently, the product measure $\mathds{P}_M \coloneqq \prod_{i \in [r]} \mathds{P}_i$ forms the required $(C_B/n)$-spread probability measure on the set of perfect matchings of~$F$.  
\end{proof}

\subsubsection*{The Proof of Theorem~\ref{thm:spread_dense_blowup}}

Over the last two subsections we have found a spread measure (from the RGA) on embeddings of~$H[X^\main]$ into~$G$ which satisfy the given image restrictions, and for each such embedding with positive probability in this measure, we constructed a spread measure on embeddings of the remaining set~$X^\buf$ to complete an embedding of~$H$ into~$G$. This entire process naturally induces a probability measure on the set of embeddings~$\Emb(H, G)$ which concur with the given image restrictions, and which we now show to be a $\mathcal{O}(1/n)$-vertex-spread probability measure. 

\begin{proof}[Proof of Theorem \ref{thm:spread_dense_blowup}]
    Suppose the sufficient conditions of Theorem~\ref{thm:spread_dense_blowup} (and hence, of Lemma~\ref{lem:dense_blowup}) hold for the given graphs~$G$ and~$H$, reduced graphs~$R$ and $R'$, partitions~$(G, \mathcal{V})$ and $(H, \mathcal{X})$, and family of subsets~$\tilde{\mathcal{X}} = \{\tilde{X}_i\}_{i \in [r]}$ and $\mathcal{I} = \{I_x\}_{x \in V(H)}$. Then, Lemmas~\ref{lem:v_main_vtx_spread} and~\ref{lem:buf_spread_measure} hold true, and as described above, the RGA and the random embedding of buffer vertices induce a probability measure, say~$\mathds{P}_S$ on the set of embeddings $\varphi: V(H) \to V(G)$ with $\varphi(x) \in I_x$. We show that~$\mathds{P}_S$ is the required $(C_S/n)$-vertex-spread measure, for some constant~$C_S$ which is defined later. 
    
    Fix $k \in [n]$ and let the vertices $x_1, x_2, \dots, x_k \in V(H)$ and $v_1, v_2, \dots, v_k \in V(G)$ be given. Without loss of generality, assume that for some~$\ell \leq k$, the vertices $x_1, x_2, \dots, x_\ell$ are in~$X^{\main}$ and the remaining $k-\ell$ vertices are in $X^{\buf}$.

    Let a partial embedding~$\varphi_{\main}$ of $H[X^{\main}]$ into $G$ be drawn from the $(1/\nu n)$-spread distribution of Lemma~\ref{lem:v_main_vtx_spread}. Let~$\mathcal{A}_{\main}$ denote the event that the generated embedding~$\varphi_{\main}$ maps~$x_i$ to~$v_i$ for all~$i \in [\ell]$. Then by Lemma~\ref{lem:v_main_vtx_spread}, we have $\mathds{P}(\mathcal{A}_{\main}) \leq (1/\nu n)^\ell$. 

    Given any~$\varphi_{\main}$ with positive probability, we define the auxiliary bipartite graph~$F$ as in the previous subsection. The perfect matchings~$M$ of~$F$ are in bijection with the embeddings $\varphi_{\buf} \in \Emb\bigl(H[X^{\buf}], G[V^{\buf}]\bigr)$ such that $\varphi_{\main}\cup\varphi_{\buf}$ is an embedding of~$H$ into~$G$ which respects the given image restrictions. Now, let~$\varphi_{\buf}$ be obtained by choosing~$M$ from the~$(C_B/n)$-spread measure of Lemma~\ref{lem:buf_spread_measure} on perfect matchings of~$F$, and let~$\mathcal{A}_{\buf}$ denote the event that~$\varphi_{\buf}$ maps~$x_i$ to~$v_i$ for all~$i \in [k]\setminus[\ell]$. This is the same as the event that~$F$ contains all of the edges~$x_iv_i$ for $i\in[k]\setminus[\ell]$, and so $\mathds{P}\bigl(\mathcal{A}_{\buf}\bigm|\varphi_{\main}\bigr) \leq (C_B/n)^{k-\ell}$. Letting~$\mathds{P}_S$ be the distribution $(\varphi_{\main},\varphi_{\buf})$ on $\Emb(H,G)$, as described above, and letting $C_S=\max(C_B,\nu^{-1})$, we have
    \begin{align*}
        \mathds{P}_S\bigl(&\{\varphi \in \Emb(H,G): \varphi(x_i) = v_i \text{ for all } i \in [k]\}\bigr) = \mathds{P}_S\bigl(\mathcal{A}_{\buf} \cap \mathcal{A}_{\main}\bigr)\\[0.5em]
        &= \;\; \smashoperator{\sum_{\varphi_{\main}\in A_\main}}\;\mathds{P}\bigl(\varphi_{\main}\bigr)\cdot \mathds{P}\bigl(\mathcal{A}_{\buf} \bigm| \varphi_{\main}\bigr) \leq \;\; \smashoperator{\sum_{\varphi_{\main}\in A_\main}}\;\mathds{P}\bigl(\varphi_{\main}\bigr)\cdot\bigl(C_B/n\bigr)^{k-\ell}\\[0.5em]
        &=\mathds{P}\bigl(\mathcal{A}_\main\bigr)\cdot \biggl(\frac{C_B}{n}\biggr)^{\!k-\ell}\le  \biggl(\frac{1}{\nu n}\biggr)^{\!\ell} \biggl(\frac{C_B}{n}\biggr)^{\!k-\ell} \leq \biggl(\frac{C_S}{n}\biggr)^{\!k}\,. \\ & \qedhere
    \end{align*} 
\end{proof}

\section{Proof of the Lemmas for \texorpdfstring{$G$}{G} and \texorpdfstring{$H$}{H}}
\label{sec:lem_GH}

\subsection{Preparing the Graph \texorpdfstring{$G$}{G} (Proof of Lemma \ref{lem:G_lemma})}
\label{sec:robust_G_lemma}  

In this section, we provide a proof of Lemma~\ref{lem:G_lemma}. We follow a standard proof strategy, used for instance in~\cite{dense_bandwidth_theorem}. Given an $n$-vertex graph~$G$ with minimum degree $\delta(G) \geq (\exth + \gamma)n$, we can use the degree version of the regularity lemma to obtain a reduced graph~$R$ with a high minimum degree. By the clique version of the Hajnal--Szemer\'{e}di theorem (Corollary~\ref{cor:clique_hajnal_szemeredi}), the graph~$R$ contains a $K_{\Delta+1}$-factor that covers all but at most~$\Delta$ vertices of~$R$. This $K_{\Delta+1}$-factor emerges as a natural candidate for~$R'$. We shall see in the next section that this choice of~$R'$ will aid in the distribution of the chosen buffer vertices in a manner that satisfies~\ref{itm:SpBL3}. The bounded maximum degree of~$K_{\Delta + 1}$ is also useful to establish super-regularity on~$R'$ using Lemma~\ref{lem:super-regularity}. Finally, the graph~$R'$ can be made spanning for the graph~$R$ by slightly changing~$R$ by removing the parts not in~$R'$ and redistributing the vertices of~$G$ in these parts to other parts of~$R$. Lemma~\ref{lem:robust_regularity} ensures that this redistribution does not affect the regularity and super-regularity properties constructed so far. 

We now give a proof of Lemma \ref{lem:G_lemma}.

\begin{proof}[Proof of Lemma \ref{lem:G_lemma}]
 Given~$\gamma > 0$ and~$\Delta \geq 2$, Let~$G$ be an $n$-vertex graph with minimum degree $\delta(G) \geq (\exth + \gamma)n$. For this~$\gamma$, an application of Corollary~\ref{cor:reduced-min-degree} returns constants~$d^*_0$ and~$\eps^*_0$ for which the statement of the corollary holds. Pick the constants $d_0 = \min \bigl\{ d_0^*/2 , \gamma/16 \bigr\}$ and~$\kappa = 2$. Now, let $\eps > 0$ and $0 < d < d_0$ be given as in the preamble of Lemma~\ref{lem:G_lemma}, and for these choices of~$\eps$ and~$d$, pick two auxiliary constants~$\eps'$ and~$d'$ as follows.  
 $$\eps'  \coloneqq   \frac{\eps^4\gamma^2 (d_0^* - 2d)^2}{3 \cdot (8\Delta)^4}\, \text{ and }
   d' \coloneqq 2d + 4\eps' \Delta + 4\Delta\sqrt{\eps'}.$$
\indent Note that $d' \leq \gamma /4$. Moreover, $d' \leq 2d + 8 \sqrt{\eps'}\Delta \leq (2d + d_0^*)/2 < d_0^*$, and thus, applying the degree version of the regularity lemma (Theorem~\ref{thm:regularity} and Corollary~\ref{cor:reduced-min-degree}), with constants $\eps'$ and $d'$, and some integer $m_0 > \max\{1/\eps', 4\Delta/\gamma\}$, we get an integer~$M_0$, such that there exist a spanning subgraph~$G'$ of~$G$ and a partition~$\mathcal{U} = U_0 \sqcup U_1 \sqcup \dots \sqcup U_m$ of~$V(G)$ for $m_0 \leq m \leq M_0$ for which \ref{itm:RL1}--\ref{itm:RL6} hold. Let~$r_0 \coloneqq  M_0$ be the required integer to be chosen for Lemma~\ref{lem:G_lemma}. By the choice of~$m_0$ and \ref{itm:RL4}, it follows that $|U_i| \leq \min\{n/m, \eps n\}$ for each part~$U_i$ of~$\mathcal{U}$.  

 Let~$R_0$ be the reduced graph generated by the above partition~$\mathcal{U}$. By Corollary~\ref{cor:reduced-min-degree}, the graph~$R_0$ on~$m$ vertices has a minimum degree $\delta(R_0) \geq \big(\exth + \gamma/2\big)m$. Then as $\exth\ge \Delta/(\Delta+1)$, by Corollary~\ref{cor:clique_hajnal_szemeredi}, the graph~$R_0$ contains a~$K_{\Delta+1}$-factor, covering all but at most~$\Delta$ vertices of~$R_0$. Let~$R'$ be this $K_{\Delta+1}$-factor in~$R_0$ and let~$R$ be the induced subgraph of~$R_0$ on the vertex set~$V(R')$. These will be the reduced graphs~$R$ and~$R'$ required by Lemma~\ref{lem:G_lemma}. 
 
 Now, given~$R$ and~$R'$, we need to construct a partition $\mathcal{V} = \{V_i\}_{i\in [|R|]}$ from the partition $\mathcal{U}$, which satisfies the properties \ref{itm:G1}--\ref{itm:G3}. To do this we begin by relabelling the vertices of~$R'$ by assigning to each vertex, a pair~$(i,j)$ that represents the clique component it lies in and its vertex number within this component. This pair~$(i,j)$ lies in the set~$[\Delta +1]\times[c]$, where~$c$ equals the number of $K_{\Delta+1}$-components in~$R'$. Note that we have $m \geq c(\Delta +1)$ and that $m - c(\Delta +1) \leq \Delta$. 
 
 We will first alter the partition~$\mathcal{U}$ to attain super-regularity on $E(R')$. An application of Lemma~\ref{lem:super-regularity} to the reduced graph~$R_0$ with subgraph~$R'$ of bounded degree $\Delta(R') \leq \Delta$, shows that for every $(i,j) \in [\Delta+1] \times [c]$, there exists a subset~$U'_{(i,j)} \subseteq U_{(i,j)}$ of size $|U'_{(i,j)}| = (1-\eps'\Delta)|U_{(i,j)}|$ such that the collection of disjoint subsets $\bigl\{U'_{(i,j)}: (i,j) \in [\Delta+1]\times[c]\bigr\}$ is $(\eps'/(1-\eps'\Delta), d'-2\eps'\Delta)$-super-regular on~$R'$ and $(\eps'/(1-\eps'\Delta), d'-2\eps'\Delta)$-regular on~$R_0$ (and hence on $R$ as well). Here, we have used the fact that~$\Delta+1 \leq 2\Delta$. 
 
 Next, for each node $(i,j) \in V(R_0)$ we shall move the vertices of~$U_{(i,j)} \setminus U'_{(i,j)}$ to the exceptional set $U_0$. Similarly, for each node $k \in V(R_0)$ not covered by $R'$, move all vertices of $U_k$ to the set $U_0$. This gives a partition $\mathcal{U}' = U'_0 \sqcup U'_{(1,1)} \sqcup \dots \sqcup U'_{(\Delta + 1, c)}$, which is $(\eps'/(1-\eps'\Delta), d'-2\eps'\Delta)$-super-regular on~$R'$ and $(\eps'/(1-\eps'\Delta), d'-2\eps'\Delta)$-regular on~$R$. Note that by~\ref{itm:RL4}, the sets~$U'_{(i,j)}$ have the same cardinality $L' = |U'_{(i,j)}| = (1-\eps'\Delta)|U_{(i,j)}|$, for all $(i,j) \in [\Delta+1]\times[c]$. 
 
 Finally, we need to deal with the exceptional set to get the required partition~$\mathcal{V}$. Note that the enlarged exceptional set~$U'_0$ is seen to be small as follows.
    $$|U'_0| \leq |U_0| + \;\; \sum_{\mathclap{(i,j) \in V(R')}}\; |U_{(i,j)} \setminus U'_{(i,j)}| + \;\;\sum_{\mathclap{k \in V(R_0\setminus R)}}\; |U_k| \leq \eps'n + m \left(\eps' \Delta \frac{n}{m}\right) + \Delta\, \eps' n \leq 3\eps' \Delta n.$$
\indent We will now redistribute these vertices. We ultimately want to use Lemma~\ref{lem:robust_regularity} to show that regularity and super-regularity are preserved after this redistribution of the vertices of the exceptional set~$U_0$. For this, we also need to ensure that the minimum degree conditions across the pairs~$\bigl( U'_{(i,j)}, U'_{(i',j)}\bigr)$ for all~$i, i' \in [\Delta+1]$ and $j \in [c]$ representing the edges of~$R'$, are preserved after this redistribution. Hence, for each vertex~$v \in U'_0$, we say that a clique component~$j \in [c]$ of~$R'$ is \emph{$v$-friendly} if~$v$ has at least~$d'n/r$ neighbours in each part~$U'_{(i,j)}$ for $i \in [\Delta +1]$, where $r = |V(R)| = c(\Delta+1)$. Now, if a vertex~$v \in U'_0$ has at most~$\gamma c$ clique components that are $v$-friendly, then the degree of~$v$ in~$G'$ is at most 
\begin{eqnarray*}
    d_{G'}(v) &\leq&  \gamma c (\Delta + 1) L'  + (1-\gamma)c \biggl(\Delta L' + \frac{d'n}{r} \biggr) + 3\eps'\Delta n\\
    &=& (\Delta + \gamma) cL' + (1-\gamma)d' \frac{n}{\Delta+1} + 3\eps' \Delta n \quad \leq \bigg(\frac{\Delta+\gamma}{\Delta+1} + \frac{(1-\gamma)d'}{\Delta+1} + 3\eps'\Delta\bigg)n \\
    &\leq& \bigg(\frac{\Delta}{\Delta + 1} + \frac{\gamma}{\Delta+1} + \frac{\gamma}{4(\Delta+1)} + \frac{\gamma}{8(\Delta+1)} \bigg)n \leq \big( \exth + 0.46 \gamma \big)n\, ,
\end{eqnarray*}
where the three terms in the first inequality represent the maximum number of neighbours of~$v$ in parts of friendly clique components, in parts of non-friendly clique components, and in the exceptional set~$U'_0$, respectively. This contradicts~\ref{itm:RL2} as since $d' + \eps' \leq \gamma/2$, we know that~$G'$ has minimum degree at least $\delta(G') \geq \delta(G) - (d'+\eps')n \geq \bigl( \exth + \gamma/2\bigr)n$. Hence, every vertex~$v \in U'_0$ has at least~$\gamma c$ clique components in~$R'$ that are $v$-friendly. 

Begin by assigning each vertex~$v \in U'_0$ to one of their $v$-friendly clique components as uniformly as possible, such that each component is assigned at most $|U'_0|/\gamma c$ vertices. Then, for each clique component, distribute its assigned vertices equitably among its~$\Delta+1$ parts. As a result, the number of vertices of~$U_0'$ received by each part~$U'_{(i,j)}$ is at most 
\begin{eqnarray*}
    \frac{|U'_0|}{\gamma c(\Delta+1)} &\leq& \frac{3 \eps'\Delta}{\gamma c (\Delta + 1)}n \ \leq  \frac{3\eps' \Delta}{\gamma c (\Delta+1)} \cdot \frac{m}{(1-\eps'\Delta)(1-\eps')} L'\\
    &\leq& \frac{3\eps' \Delta}{\gamma}\cdot \frac{2}{1- 2\eps'\Delta}L'\leq \frac{12\Delta}{\gamma} \eps' L' \leq \sqrt{\eps'}L'\, .
\end{eqnarray*}

Label these newly formed parts by $V_{(i,j)}$. We will now show that $\mathcal{V} = \{V_{(i,j)}\}$ is the required partition. By Lemma~\ref{lem:robust_regularity}, with $\hat{\alpha} = \hat{\beta} = \sqrt{\eps'}$, we see that the pairs $(V_{(i,j)}, V_{(i',j')})$ are $(\eps, d)$-regular on $R$, due to the following relations. 
$${\eps'}/({1-\eps'\Delta}) + 6(\eps')^{1/4} \leq 2\eps' + 6(\eps')^{1/4} \leq 8(\eps')^{1/4} \leq \eps \text{ and, }$$
$$ \big(d' - 2\eps'\Delta\big) - 4\sqrt{\eps'} \geq 2d + 2\eps'\Delta \geq d.$$

Now, for super-regularity, note that as the partition~$\mathcal{U}'$ is $(\eps'/(1-\eps'\Delta),d'-2\eps'\Delta)$-regular on~$R$ and $(\eps'/(1-\eps'\Delta), d'-2\eps'\Delta)$-super-regular on~$R'$, it is also $(\eps'/(1-\eps'\Delta), 0.5 d'-\eps'\Delta)$-regular on~$R$ and $(\eps'/(1-\eps'\Delta), 0.5d'-\eps'\Delta)$-super-regular on~$R'$. Moreover, each vertex in~$U'_{(i,j)}$ has at least~$(d' - 2\eps'\Delta)L'$ neighbours in~$U'_{(i',j)}$ for all $i \neq i' \in [\Delta+1]$ and $j \in [c]$, and hence, as $|V_{(i',j)}| \le (1+\sqrt{\eps'})L' \leq 2L'$, each such vertex of $U'_{(i,j)}$ also has at least $(0.5d' - \eps'\Delta)|V_{(i',j)}|$ neighbours in~$V_{(i',j)}$. On the other hand, as all vertices of the exceptional set have been added to parts in friendly clique components and as~$d' \geq \sqrt{\eps'}\Delta(1 + \sqrt{\eps'})$, every vertex in $V_{(i,j)}\setminus U_{(i,j)}$ also has at least 
\begin{eqnarray*}
    d'n/r &\geq& d'L' \geq d'L' + \sqrt{\eps'}\big(d' - \sqrt{\eps'}\Delta - \eps'\Delta\big)L'\\
    &=&(d'-\eps'\Delta)(1 + \sqrt{\eps'})L' \geq (0.5d' - \eps'\Delta)|V_{(i',j)}|
\end{eqnarray*} 
neighbours in~$V_{(i',j)}$ for~$i\neq i'$. Since $\Delta \geq 2$, we have $(0.5d' - \eps'\Delta) - 4\sqrt{\eps'} \geq d$, and so by Lemma \ref{lem:robust_regularity}, the partition $\mathcal{V}$ is also $(\eps, d)$-super-regular on~$R'$. Thus the graph~$G'$ satisfies~\ref{itm:G3}. The condition~\ref{itm:G2} follows from the following observation.
$$\delta(R) \geq \delta(R_0) - \Delta \geq \delta(R_0) - (\gamma/4)m \geq \bigl( \exth + \gamma/4\bigr)m \geq \bigl(\exth + \gamma/4\bigr)r\,.$$ 

Finally, for~\ref{itm:G1}, it is easy to see that~$\mathcal{V}$ is $\kappa$-balanced for $\kappa = 2$ as 
\[L' \leq |U'_{(i,j)}| \leq |V_{(i,j)}| \leq (1 + \sqrt{\eps'})L' \leq 2L', \quad \text{ for all } (i,j) \in [\Delta + 1]\times[c].\]

Further, as $|U'_0| \leq 3\eps'\Delta n$, we have that for all $(i,j) \in [\Delta+1]\times [c]$, the sets~$V_{(i,j)}$ have size at least $|V_{(i,j)}| \geq L' \geq (1 - 3\eps'\Delta)n/r \geq (1 - \gamma/8)n/r \geq n/\kappa r$, and at most $|V_{(i,j)}| \leq 2L' \leq \kappa n/r$ for $\kappa = 2$ and $\gamma < 1- \exth \leq 1/2$, thereby completing the proof. 
\end{proof}

\subsection{Partitioning the graph \texorpdfstring{$H$}{H} (Proof of Lemma \ref{lem:Lemma_H})}
\label{sec:robust_H_lemma}

Let us begin with a brief sketch of the proof of Lemma~\ref{lem:Lemma_H}. Modulo the presence of the given pre-partition~$X^*$, the conclusion of Lemma~\ref{lem:Lemma_H} can be achieved in three steps. First, we begin by identifying a good subset of vertices $\mathcal{B} \subseteq V(H)$, which will act as potential buffer vertices. For this, let $N^2[x] \coloneqq \{y \in V(H): \dist(x,y) \leq 2\}$ denote the \emph{closed second neighbourhood} of the vertex $x \in V(H)$. Note that in order to satisfy the conclusion \ref{itm:H3}, we require that for any buffer vertex $x \in \tilde{X}_i$, if $xy, yz \in E(H)$ for some $y\in X_j$ and $z \in X_k$, then $ij$ and $jk$ must be edges of the reduced graph $R'$. As $R'$ is a disjoint union of cliques $K_{\ell}$ for $\ell \geq \Delta+1$, we need to ensure that for each buffer vertex $x \in \tilde{X}_i$, all vertices of $N^2[x]$ must be assigned to parts of the same clique component of $R'$ which contains the node $i$, while ensuring that the assignment forms an $R$-partition. 

For such an assignment, it will be convenient to pick buffer vertices such that for every pair $x, y \in \mathcal{B}$ there are no edges between the sets $N^2[x]$ and $N^2[y]$, as this will allow for $N^2[x]$ to be partitioned and assigned independently of $N^2[y]$. This can be achieved by constructing an auxiliary graph~$\tilde{H}$, where $xy \in E(\tilde{H})$ if and only if $\dist_H(x,y) \leq 5$. Now, any independent set in the graph $\tilde{H}$ can be chosen as the set of potential buffer vertices $\mathcal{B}$, which will then be equitably distributed to form the sets $\tilde{X}_i$. A large independent set of the required size in $\tilde{H}$ will be found by an application of the Hajnal--Szemer\'{e}di theorem (Theorem \ref{thm:Hajnal-Szemeredi}) to the graph $\tilde{H}$. 

Once the potential buffer sets $\tilde{X}_i$ have been fixed, the second step will be to assign the vertices of $N^2[x]$ for each $x \in \tilde{X}_i$ to parts of the clique $K_{\ell}$ of $R'$ which contains the node $i$. To do this, we apply Theorem~\ref{thm:Hajnal-Szemeredi} to the subgraph of $H$ induced by $N^2[x]$ and assign the obtained independent sets to distinct parts of the clique. 

Finally, we need to construct the partition $\mathcal{X} = \{X_i\}_{i\in [r]}$ by assigning the remaining vertices of~ $V(H)$ to some part $X_i$, so that $\mathcal{X}$ forms an $R$-partition and is size-compatible to the partition $\mathcal{V}$ of $V(G)$. This assignment can be completed by the definition of $\exth$ as follows: Let $R^*$ be the blow-up of the reduced graph $R$ with respect to the function $f: i \mapsto |V_i|$. The previous two steps give a partial embedding of the induced subgraph $H\bigl[N^2[\mathcal{B}]\bigr]$ of $H$ into $R^*$. Moreover, $R^*$ has a high minimum degree by the lower bound assumptions on $\delta(R)$ and $|V_i|$ in the statement of Lemma~\ref{lem:Lemma_H}. Then, by the definition of $\exth$, this partial embedding extends to an embedding of $H$ into $R^*$, thereby giving a size-compatible $R$-partition of $V(H)$. It is now easy to check that both \ref{itm:H2} and \ref{itm:H3} are satisfied for the partitions $\mathcal{X}$ and $\tilde{\mathcal{X}}$. The small fixed pre-partition $\mathcal{X}^*$ will not majorly affect this outlined strategy and can be dealt with comfortably. 

We are now ready to prove Lemma \ref{lem:Lemma_H}.

\begin{proof}[Proof of Lemma \ref{lem:Lemma_H}]
    Given $\Delta \geq 2$, $\gamma > 0$, and $\kappa > 1$,
    we let $\delta>0$ be returned by the definition of $\exth$ with input $\gamma/2$. That is, for any $n$-vertex graphs $G'$ and $H'$ with $\Delta(H')\le\Delta$ and $\delta(G')\ge(\exth+\gamma/2)\,n$, any subset $S'\subseteq V(H')$ with $|S'|\le\delta n$, and any embedding $\varphi_{S'}$ of $H'[S']$ into $G'$, there is an embedding $\varphi$ of $H'$ into $G'$ that extends $\varphi_{S'}$.
    
    Let $\alpha \coloneq \gamma^2\delta/8 \kappa \Delta^6$ be the constant required by Lemma \ref{lem:Lemma_H}. For the given $\ell\geq \Delta+1$, suppose that the graphs $G$, $R$, $R'$, and partitions $\mathcal{V} = \{V_i\}_{i \in [r]}$ of $V(G)$ and $\mathcal{X}^* = \{X_i^*\}$ of $X^* \subseteq V(H)$ are given as in the statement of Lemma \ref{lem:Lemma_H}. Let the function $f: V(R) \to \mathbb{N}$ be defined by the mapping $i \mapsto |V_i|$ for all $i \in [r]$, and let $R^*$ denote the blow-up graph of $R$ with respect to the function $f$. For each $i \in [r]$, define $F_i$ to be the set of $|V_i|$ independent vertices representing the blow-up of the node $i \in V(R)$ in $R^*$. We construct the partition $\mathcal{X}$ and the sets $\tilde{X}_i$ as follows. Let $U \subseteq V(H)$ be the set of vertices $\{ u \in V(H) : \dist_H(u, x^*) > 3  \text{ for all } x^* \in X^*\}$. As for any vertex $x^* \in X^*$, there are at most $1 + \Delta + \Delta^2 + \Delta^3 \leq \Delta^4$ vertices $v$ with distance $\dist_H(v,x^*) \leq 3$, we have that $|U| \geq n - \Delta^4|X^*| \geq (1 - \gamma/4)n$.

    \textbf{Step I: Constructing the sets $\tilde{X}_i$.} Given the graph $H$, consider an auxiliary graph $\tilde{H}$ with $V(\tilde{H}) = U \subseteq V(H)$ and $xy \in E(\tilde{H})$ if an only if $\dist_H(x,y) \leq 5$. Since there are at most $\tilde{\Delta} = 1 + \Delta + \Delta^2 + \dots + \Delta^5$ vertices at a distance at most $5$ from any fixed vertex of $H$, it follows that the maximum degree $\Delta(\tilde{H}) \leq \tilde{\Delta} \leq \Delta^6 - 1$. Hence, by Theorem~\ref{thm:Hajnal-Szemeredi}, the vertices $U = V(\tilde{H})$ can be equitably partitioned into $\Delta^6$ independent sets of $\tilde{H}$. Let $\mathcal{B}_0$ be one of these independent sets. We will choose some vertices from $\mathcal{B}_0$ to form the potential buffer vertices for $H$. The sets $\tilde{X}_i$ are formed by choosing distinct $\bigl\lceil \alpha |V_i| \bigr\rceil$ vertices from the set $\mathcal{B}_0$ for each $i \in [r]$. This is possible as 
    $$\sum_{i \in [r]} \bigl\lceil \alpha |V_i| \bigr\rceil \leq 2\alpha n \leq  (1 - \gamma/4) n/\Delta^6  \leq |\mathcal{B}_0|.$$ 
    Define $\mathcal{B} = \bigcup_{i \in [r]} \tilde{X}_i$ to be the set of chosen potential buffer vertices. Let $\varphi_{\mathcal{B}}: \mathcal{B} \to V(R^*)$ denote an arbitrary map that for every $i \in [r]$ sends each $x \in \tilde{X}_i$ to some distinct vertex in $F_i$. Note that by choice of $U$, as $\dist_H(u, x^*) > 3$ for any $u \in U$ and $x^* \in X^*$, we have that $N^2[\mathcal{B}] \cap X^* = \emptyset$ and that there is no edge between vertices of $X^*$ and $N^2[\mathcal{B}]$. 
    
    \textbf{Step II: Partially embedding $N^2[\mathcal{B}]$ into $R^*$.} Without loss of generality, consider a buffer vertex $x \in \tilde{X}_1$, and let the nodes $\{1, 2, \dots, \ell\} \subseteq [r]$ denote the clique component $K_{\ell}$ in $R'$ for the given $\ell \geq \Delta+1$, which contains the node $1 \in V(R)$. We wish to assign each vertex of $N^2[x] \setminus\{x\}$ to some distinct available vertex of $F_i$ for $i \in [\ell]$. For this, consider the induced subgraph $H_x = H\bigl[N^2[x]\bigr]$, which induces the maximum degree condition $\Delta(H_x) \leq \Delta \leq \ell-1$ from $H$. By Theorem~\ref{thm:Hajnal-Szemeredi}, $N^2[x] = V(H_x)$ can be equitably partitioned into $\ell$ independent sets of $H_x$. Map the vertices of the independent set containing $x$ to distinct available vertices of $F_1 \subseteq V(R^*)$. Arbitrarily assign the remaining $\ell-1$ independent sets to each of the nodes $\{2, 3, \dots, \ell\}$, and for each node $i \in \{2, 3, \dots, \ell\}$, map the vertices of the assigned independent set to distinct available vertices of $F_i \in V(R^*)$. By construction of the set $\mathcal{B}$, this process can be independently repeated for each buffer vertex $x \in \mathcal{B}$, as for $x \neq y \in \mathcal{B}$ we have that $\dist_H(x,y) > 5$ and hence, the sets $N^2[x]$ and $N^2[y]$ are disjoint with no edges going between them. 
    
    Note that it is possible to map the whole of $N^2[\mathcal{B}]$ to distinct vertices of $R^*$. Indeed, each clique component receives at most $\ell\cdot 2\alpha \kappa n/r$ buffer vertices, each of these buffer vertices has a set $N^2[v]$ of size at most $1 + \Delta + \Delta^2 \leq \Delta^3$, and all these vertices are equitably divided among the $\ell$ nodes of the clique component. Thus, a total of at most $2\Delta^3\alpha \kappa n/r \leq \gamma|V_i|/2\Delta^3 < |F_i|/4$ vertices are assigned to each of the sets $F_i$. Similarly, we assign the vertices of $X_i^*$ to distinct available vertices of $F_i$, which is also possible as there is no edge between $N^2[\mathcal{B}]$ and $X^*$, the set $X_i^*$ has size at most $|F_i|/4$, and only at most $|F_i|/4$ vertices have been assigned to $F_i$ so far. 
    
    Now, let $\varphi_S : N^{2}[\mathcal{B}] \cup X^* \to V(R^*)$ denote this mapping of vertices of $N^2[\mathcal{B}]$ and $X^*$. By construction, all vertices mapped to $F_i$ form an independent set in $H$ for each $i \in [r]$, and as the graphs $R^*[F_i \sqcup F_j]$ are complete bipartite graphs for all $ij \in E(R)$, the map $\varphi_S$ embeds the graph $H[N^2[\mathcal{B}]\cup X^*]$ into $R^*$.  

    \textbf{Step III: Constructing the partition $\mathcal{X}$.} For the set $S = N^2[\mathcal{B}] \cup X^* \subseteq V(H)$, we have an embedding $\varphi_S$ of $H[S]$ into $R^*$. Further, the set $S$ has size at most $|S| = |N^2[\mathcal{B}]| + |X^*| \leq 2\alpha n \cdot \Delta^3 + \alpha n \leq \delta n $. As  $\delta(R) \geq (\exth + \gamma)r$, and $|V_i| \geq \bigl(1 - \frac{\gamma}{2}\bigr)\frac{n}{r}$, we have that for all $u \in V(R^*)$, 
    $$ d_{R^*}(u) \geq \delta(R) \cdot |V_i| \geq (\exth + \gamma)r \cdot \Bigl(1 - \frac{\gamma}{2}\Bigr)\frac{n}{r} \geq \Bigl(\exth + \gamma - \frac{\gamma}{2}\Bigr) n = \Bigl(\exth + \frac{\gamma}{2}\Bigr) n.$$
    Thus by definition of $\exth$, there is an embedding $\varphi \in \Emb(H, R^*)$ which extends the partial map $\varphi_S$. We then define the partition $\mathcal{X} = \{X_i\}_{i \in [r]}$ by setting $X_i = \varphi^{-1}(F_i)$ for all $i \in [r]$. 
    
    It now remains to verify \ref{itm:H1}--\ref{itm:H4}: The partition $\mathcal{X}$ is size-compatible to $\mathcal{V}$ as $\varphi$ is a bijection, and hence, for every $i \in [r]$, we have $|X_i| = |F_i| = |V_i|$. Further, suppose $x \in X_i$ and $y \in X_j$ are such that $xy \in E(H)$. Then, as $(H[X^*], \mathcal{X}^*)$ is an $R$-partition, and by the extension of $\varphi_S$ above, we have $\varphi(x)\varphi(y) \in E(F_i, F_j)$ and hence $ij \in E(R)$, making $(H, \mathcal{X})$ an $R$-partition. This verifies \ref{itm:H1} and \ref{itm:H2}. For \ref{itm:H3}, by Step I, each $X_i$ contains at least $|\tilde{X}_i| \geq \alpha|V_i| = \alpha|X_i|$ buffer vertices, and by the placement of $N^2[\mathcal{B}]$ in the clique components of $R'$ in Step II, if for $x \in \tilde{X}_i$ we have $y \in X_j, z \in X_k$ with $xy, yz \in E(H)$, then $ij, jk \in E(R')$. Thus, $\tilde{\mathcal{X}}$ is an $(\alpha, R')$-buffer for $(H, \mathcal{X})$. Finally, \ref{itm:H4} follows from the choice of set $U$ used to choose the potential buffer vertices $\mathcal{B}$.\\ 
\end{proof}

\section{Concluding Remarks and Optimality}
\label{sec:conclusion}

It might be possible to improve Theorem~\ref{thm:Robust_main_basic} in two
directions. First, we believe that if Conjecture~\ref{conj:BEC}
holds true, then it should do so robustly. Thus, it seems reasonable
to conjecture the following.

\begin{conj}
  The extension threshold~$\exth$ equals $\Delta/(\Delta+1)$.
\end{conj}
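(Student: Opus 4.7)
The plan is to split the conjecture into the lower bound $\exth \geq \Delta/(\Delta+1)$ and the upper bound $\exth \leq \Delta/(\Delta+1)$. The lower bound is essentially the tightness construction for the Bollob\'as--Eldridge--Catlin conjecture, applied with empty partial embedding $\varphi_S = \emptyset$. Take $G$ to be a complete $(\Delta+1)$-partite graph on $n$ vertices with one part of size $\lfloor n/(\Delta+1)\rfloor + 1$ and the remaining $\Delta$ parts distributed so that $\delta(G) \geq \Delta n/(\Delta+1) - 1$, and take $H$ to be a $K_{\Delta+1}$-factor (after the usual divisibility adjustments). Embedding fails because every copy of $K_{\Delta+1}$ must use exactly one vertex from each part of $G$, and the oversized part cannot be fully covered. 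This witnesses $\exth \geq \Delta/(\Delta+1)$.

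The upper bound is the hard direction. Its $S = \emptyset$ specialisation is exactly the asymptotic Bollob\'as--Eldridge--Catlin conjecture, which remains open, so any proof must simultaneously settle BEC asymptotically and then strengthen it to allow partial-embedding extensions. The natural strategy mirrors the proof of Theorem~\ref{thm:Robust_main}: apply the degree form of the regularity lemma to $G$ to obtain a reduced graph $R$ with $\delta(R) \geq (\Delta/(\Delta+1) + \gamma/2)r$, extract an almost-spanning $K_{\Delta+1}$-factor $R'$ of $R$ via Corollary~\ref{cor:clique_hajnal_szemeredi}, make the corresponding clusters of $V(G)$ super-regular along $R'$ via Lemma~\ref{lem:super-regularity}, and feed the result into the blow-up lemma. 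Since our Lemma~\ref{lem:Lemma_H} uses $\exth$ itself to assign the bulk of $V(H)$, that step needs to be replaced with a direct partition: allocate $V(H)$ to the $K_{\Delta+1}$-components of $R'$ in correct aggregate sizes and, within each component, invoke the Hajnal--Szemer\'edi theorem (Theorem~\ref{thm:Hajnal-Szemeredi}) to obtain an equitable $(\Delta+1)$-colouring whose classes line up with the $\Delta+1$ clusters.

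The main obstacle is this partitioning step. At the BEC threshold the density of $R$ is only $\Delta/(\Delta+1) + \gamma/2$, which is barely enough to guarantee a $K_{\Delta+1}$-factor in $R$ and leaves essentially no slack for routing edges of $H$ between different components of $R'$ or for rebalancing colour-class sizes against cluster sizes. The most promising remedy is absorption: isolate inside $G$ a small flexible subgraph built from swappable copies of $K_{\Delta+1}$ capable of absorbing any residual size imbalance, embed the remainder of $H$ approximately via the blow-up lemma, and finalise via the absorber. Once such an unconditional BEC-style embedding is in hand with sufficient flexibility, upgrading to the extension property should be a comparatively minor addition by adapting Step~I of the proof of Lemma~\ref{lem:Lemma_H}: freeze $\varphi_S$, reserve a distance-$3$ neighbourhood of $\varphi_S(S)$ inside the cluster structure, impose image restrictions there, and run the unconditional argument on the residual graphs, with $\eta$ chosen small enough that the frozen vertices do not overflow the absorber.

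A complementary route is to sharpen the vertex-switching proof of Lemma~\ref{lem:extender_sauer_spencer} by replacing single-vertex swaps with cooperative swaps along local patches of $H$, in the spirit of the Kaul--Kostochka--Yu argument. At the lower threshold $\Delta/(\Delta+1)$, however, the single-swap count becomes tight (the common non-neighbourhood of a $\Delta$-set can reach exactly $n/(\Delta+1)$), so any such refinement must somehow extract a global gain not already encoded in one-step switching. That this gain is hard to extract is precisely why BEC has resisted resolution for nearly half a century, and it is the heart of the difficulty that any proof of the conjecture must confront.
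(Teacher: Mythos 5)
This statement is presented in the paper as a conjecture, not a theorem: the paper contains no proof of it, so there is no argument of ours to compare yours against. Your diagnosis of the situation is accurate. The lower bound $\exth \geq \Delta/(\Delta+1)$ does follow from the slightly unbalanced complete $(\Delta+1)$-partite example with $S=\emptyset$, exactly as you describe; this is the one-line remark we make after Definition~\ref{defn:extension_threshold}, and that half is complete. The upper bound, as you correctly observe, already implies the asymptotic form of Conjecture~\ref{conj:BEC} by specialising to $S=\emptyset$, and that conjecture is open; consequently no proof of the statement can currently be completed, and your submission is a research programme rather than a proof.

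The gap you acknowledge is genuine and is the whole difficulty. At minimum degree $(\Delta/(\Delta+1)+\gamma)n$ the reduced graph barely supports a $K_{\Delta+1}$-factor via Corollary~\ref{cor:clique_hajnal_szemeredi}, and the step of distributing $V(H)$ over the clusters --- which in our Lemma~\ref{lem:Lemma_H} is carried out by invoking $\exth$ itself, so cannot be reused here without circularity, as you rightly note --- has no known replacement. An equitable Hajnal--Szemer\'edi colouring of $H$ (Theorem~\ref{thm:Hajnal-Szemeredi}) need not line up with the cluster sizes, and $H$ may be a bounded-degree expander whose vertices cannot be split across distinct components of $R'$ without routing edges between components that the reduced graph does not supply; absorption for spanning structures of linear bandwidth is precisely the obstruction that has kept Conjecture~\ref{conj:BEC} open. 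Your observation that the single-swap count in the proof of Lemma~\ref{lem:extender_sauer_spencer} becomes exactly tight at $\Delta/(\Delta+1)$ (the common non-neighbourhood of a $\Delta$-set can occupy a full part of size $n/(\Delta+1)$) is also correct, and explains why no local refinement of the switching argument suffices. In short: the lower bound is proved, the upper bound is an honestly flagged open problem, and the statement remains a conjecture.
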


If this conjecture is true, then our
bound on the minimum degree of~$G$ in Theorem~\ref{thm:Robust_main_basic} is optimal up to a multiplicative factor of at most~$2$.

The second direction concerns our range of values for~$p$. A simple lower
bound on the values of~$p$ which satisfy Theorem~\ref{thm:Robust_main}
is given by the threshold for containment of the graph~$H$ in the Erd\H
os--R\'{e}nyi random graph~$G(n, p) \coloneqq K_n(p)$. For indeed, if~$H$ is not contained in~$G(n,p)$ with high probability, then it is unlikely to be contained in the graph~$G(p)$ as well. This lower
bound can act as a measure that is indicative of the degree of robustness of some given
property~$\Pi$. More specifically,~$\Pi$ is said to be \emph{extremely robust} for~$G$ if the
threshold for~$G(p)$ to satisfy~$\Pi$ matches the
threshold for~$G(n,p)$ to satisfy~$\Pi$ up to a multiplicative constant (see
\cite{toolkit_robust_thresholds}). For instance,
the robustness result in~\cite{robust_hamiltonicity} says that Hamiltonicity is extremely robust for Dirac graphs, as the threshold for Hamiltonicity 
for~$G(n,p)$ equals~$\Theta(\log n/n)$. 

In fact, it is believed that the property of containing a graph~$H$ is extremely robust for any host graph $G$ that satisfies the minimum degree threshold required for containing~$H$. This conjecture, which was raised in \cite{Alp_robustness_spread, toolkit_robust_thresholds}, puts forth the idea that beyond a certain minimum degree condition for~$G$, any obstruction to embedding a given fixed subgraph in~$G(p)$ is entirely independent of the host graph~$G$ and relies solely on the properties of~$G(n,p)$.  

In the case of Theorem~\ref{thm:Robust_main}, the exact threshold for
containing a subgraph~$H$ is not entirely known. However, for any possible value of $m_1$, if there exists a graph $H'$ with $m_1(H') = m_1$, then there is a sequence of graphs given by $H'$-factors for which the threshold for containment in $G(n,p)$ is of the order $\Omega(n^{-1/m_1})$ \cite{K_rfactor_threshold}. 
Thus, over the class of bounded degree graphs~$H$ having $\Delta(H) \leq \Delta$ and with a fixed maximum $1$-density $m_1(H)$, our value for~$p$, and hence
the strength of our robustness result, is optimal up to the $\log$
factor.

Further, it is not difficult to check that among all graphs~$H$ with maximum
degree $\Delta\ge2$, the quantity~$m_1(H)$ is maximised by
$K_{\Delta+1}$ (albeit not uniquely), which has
$m_1(H)=(\Delta+1)/{2}$. For all graphs~$H$ with
$\Delta(H) \leq \Delta$ and $m_1(H)={(\Delta+1)}/{2}$, we are able to work with the slightly better
probability $Cn^{-2/(\Delta+1)}(\log n)^{1/\binom{\Delta+1}{2}}$.

\begin{theorem}
    \label{thm:tight_improvement}
    For all\/ $\gamma > 0$ and\/ $\Delta \in \mathbb{N}$, there exists
    a constant\/ $C^* > 0$ such that if~$H$ is an $n$-vertex graph
    with maximum degree\/ $\Delta(H) \leq \Delta$ and with $m_1(H) = (\Delta+1)/2$, and~$G$ is an\/
    $n$-vertex graph with minimum degree\/
    $\delta(G) \geq (\exth+ \gamma) n$, then for\/
    $p \geq C^*n^{-2/(\Delta+1)}(\log n)^{1/\binom{\Delta+1}{2}}$, with high probability, the graph~$H$ is contained in the random subgraph~$G(p)$ of~$G$.
\end{theorem}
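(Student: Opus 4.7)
The proof of Theorem~\ref{thm:tight_improvement} would follow the template of Theorem~\ref{thm:Robust_main} essentially verbatim, up to the very last step. Invoking Lemmas~\ref{lem:G_lemma} and~\ref{lem:Lemma_H} for the pair $(G,H)$ prepares reduced graphs $R, R'$ together with compatible partitions $\mathcal{V}$ and $\mathcal{X}$ and buffer sets $\tilde{\mathcal{X}}$, so that the spread blow-up lemma (Theorem~\ref{thm:spread_dense_blowup}) applies and produces a $(C_S/n)$-vertex-spread probability measure on the set $\Emb(H,G)$. By Theorem~\ref{thm:vtx_spread_implies_spread}, this induces a $q$-spread probability measure $\mu'$ on the family $\mathcal{F}_H$ of unlabelled copies of $H$ in $G$, with spread $q=C^\dagger n^{-1/m_1(H)}=C^\dagger n^{-2/(\Delta+1)}$.

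The single departure from the proof of Theorem~\ref{thm:Robust_main} is to replace the invocation of Theorem~\ref{thm:FKNP_spread_measure}, which gives $p\geq Kq\log\ell(\mathcal{F}_H)=O\bigl(n^{-2/(\Delta+1)}\log n\bigr)$, with a sharper FKNP-type threshold theorem tailored to \emph{balanced} families. The target bound $p\geq C^* n^{-2/(\Delta+1)}(\log n)^{1/\binom{\Delta+1}{2}}$ matches the Johansson--Kahn--Vu threshold for the appearance of a $K_{\Delta+1}$-factor in $G(n,p)$, and the fractional Kahn--Kalai theorem of~\cite{FKNP_Fractional_Thresholds} is designed to yield exactly this exponent on $\log n$ whenever the spread measure lives on copies of an $H$ whose maximum $1$-density is achieved by a subgraph with $\binom{\Delta+1}{2}$ edges on $\Delta+1$ vertices, as is the case here.

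The main technical obstacle is to make this last step rigorous, namely to verify that our $\mu'$ satisfies the additional regularity hypotheses needed by the sharper threshold theorem. Concretely, the stronger statement requires a form of \emph{uniform balance} of $\mu'$ on the collection of $K_{\Delta+1}$-subgraphs sitting inside each copy of $H$. I would establish this by revisiting the proof of Theorem~\ref{thm:vtx_spread_implies_spread}: the inequality $|S|\le m_1(H)\bigl(|V(S)|-c\bigr)$ used there is tight precisely when $S$ forms a disjoint union of cliques $K_{\Delta+1}$, so $\mu'$ is already extremal on exactly those configurations that drive the threshold.

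Once that is in hand, the argument concludes as before: the refined FKNP theorem applied to $\mu'$ implies that $G(p)$ contains some element of $\mathcal{F}_H$, and hence a copy of $H$, with high probability for every $p\geq C^* n^{-2/(\Delta+1)}(\log n)^{1/\binom{\Delta+1}{2}}$. All the graph-theoretic machinery---partition lemmas, spread blow-up, vertex-spread-to-spread conversion---is reused unchanged; the improvement is entirely black-boxed into the choice of threshold theorem.
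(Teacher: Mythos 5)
Your proposal hinges on a ``sharper FKNP-type threshold theorem tailored to balanced families'' that would convert a $C^\dagger n^{-2/(\Delta+1)}$-spread measure into the threshold $n^{-2/(\Delta+1)}(\log n)^{1/\binom{\Delta+1}{2}}$. No such theorem is available in \cite{FKNP_Fractional_Thresholds} or elsewhere: the fractional Kahn--Kalai theorem inherently pays a full factor of $\log\ell(\mathcal{F})$, and shaving this down to a fractional power of $\log n$ is not a consequence of spreadness plus any ``uniform balance'' condition. The $(\log n)^{1/\binom{\Delta+1}{2}}$ exponent is precisely the content of the Johansson--Kahn--Vu theorem for clique factors, whose proof uses entirely different (entropy/counting) machinery; your observation that the inequality $|S|\le m_1(H)(|V(S)|-c)$ is tight exactly on disjoint unions of $K_{\Delta+1}$'s identifies \emph{why} the spread bound cannot be improved for those configurations, but it does not supply the missing threshold theorem. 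So the key step of your argument has no proof.

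The paper takes a different and more modest route that you should compare with. It decomposes $H$ into $H_1$, the union of all $K_{\Delta+1}$-components, and the $K_{\Delta+1}$-free remainder $H_2$. A short lemma (Lemma~\ref{lem:suplementary}) shows that a $K_{\Delta+1}$-free graph of maximum degree $\Delta$ has $m_1 \le (\Delta+1)/2 - 1/2(\Delta+1)$, so for $H_2$ the ordinary Theorem~\ref{thm:Robust_main} already applies at the claimed value of $p$ --- the full $\log n$ factor is affordable there because the polynomial exponent strictly improves. The clique-factor part $H_1$ is then handled by quoting the robust Johansson--Kahn--Vu theorem of Pham, Sah, Sawhney, and Simkin (Theorem~\ref{thm:toolkit_robust}) as a black box, and the two embeddings are made vertex-disjoint via a uniformly random vertex split whose minimum-degree properties are controlled by a hypergeometric Chernoff bound. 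In short: where you try to improve the threshold theorem, the paper isolates the only obstruction (the $K_{\Delta+1}$-components) and outsources it to an existing sharp result. If you want to salvage your approach, you would essentially have to reprove a robust JKV theorem from the spread framework, which is an open-ended and much harder task.
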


Johansson, Kahn, and Vu~\cite{K_rfactor_threshold} showed that the (sharp) threshold for containing a $K_{\Delta+1}$-factor in $G(n,p)$ is of the order of magnitude $n^{-2/(\Delta+1)}(\log n)^{1/\binom{\Delta+1}{2}}$. This makes our value for $p$ in Theorem~\ref{thm:tight_improvement} optimal up to the
constant over the class of graphs $H$ with $\Delta(H) \leq \Delta$ and $m_1(H) = (\Delta+1)/2$. The remainder of this section is devoted to a proof of Theorem~\ref{thm:tight_improvement}. We begin with the following observation.

\begin{lemma}
\label{lem:suplementary}
    Let $H$ be an $n$-vertex graph with $\Delta(H) \leq \Delta$. If~$H$ contains no copy of~$K_{\Delta+1}$ as a subgraph, then we have that $n^{-1/m_1(H)}\log n = o\bigl(n^{-2/(\Delta+1)}\bigr)$. 
\end{lemma}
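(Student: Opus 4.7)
The plan is to reduce the claim to showing that $m_1(H) < (\Delta+1)/2$ with a positive gap depending only on $\Delta$. First I would observe that if $m_1(H) \leq (\Delta+1)/2 - c$ for some constant $c = c(\Delta) > 0$, then $1/m_1(H) \geq 2/(\Delta+1) + \delta$ for some $\delta = \delta(\Delta) > 0$, and hence $n^{-1/m_1(H)}\log n \leq n^{-2/(\Delta+1)} \cdot n^{-\delta}\log n = o\bigl(n^{-2/(\Delta+1)}\bigr)$, as required. So the entire content of the lemma reduces to establishing such a gap.

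To establish the bound $m_1(H) \leq (\Delta+1)/2 - c(\Delta)$, I would bound $d_1(H') = e(H')/(v(H')-1)$ for every subgraph $H' \subseteq H$ on $v := v(H') \geq 2$ vertices, splitting the argument by the size of $H'$. If $v \leq \Delta$, then $e(H') \leq \binom{v}{2}$ gives $d_1(H') \leq v/2 \leq \Delta/2$. If $v \geq \Delta+2$, the maximum-degree bound $e(H') \leq v\Delta/2$ gives $d_1(H') \leq v\Delta/\bigl(2(v-1)\bigr)$, which is decreasing in $v$ and so is maximised at $v = \Delta+2$, taking the value $(\Delta+1)/2 - 1/\bigl(2(\Delta+1)\bigr)$. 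This leaves the critical case $v = \Delta+1$.

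The main obstacle is precisely this middle case, since among $\Delta$-bounded graphs on $\Delta+1$ vertices only $K_{\Delta+1}$ itself attains $d_1 = (\Delta+1)/2$, so the maximum-degree bound alone is not enough to beat $(\Delta+1)/2$. This is where the $K_{\Delta+1}$-free hypothesis on $H$ is used: since $H' \subseteq H$ cannot be $K_{\Delta+1}$, we have $e(H') \leq \binom{\Delta+1}{2} - 1$, and hence $d_1(H') \leq (\Delta+1)/2 - 1/\Delta$. Combining the three cases yields $m_1(H) \leq (\Delta+1)/2 - 1/\bigl(2(\Delta+1)\bigr)$ for every $\Delta \geq 1$, which by the opening reduction completes the proof.
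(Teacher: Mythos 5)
Your proposal is correct and follows essentially the same route as the paper: reduce to showing $m_1(H)\le(\Delta+1)/2-c(\Delta)$, then bound $d_1(H')$ by splitting on $v(H')\le\Delta$, $v(H')=\Delta+1$ (where $K_{\Delta+1}$-freeness gives $e(H')\le\binom{\Delta+1}{2}-1$), and $v(H')\ge\Delta+2$, arriving at the same gap $1/\bigl(2(\Delta+1)\bigr)$. The only cosmetic difference is that in the first case the paper argues via $\Delta(H')\le\Delta-1$ rather than $e(H')\le\binom{v}{2}$; both give $d_1(H')\le\Delta/2$.
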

\begin{proof}
   Note that for any graph~$H$, its maximum $1$-density~$m_1(H)$ is at most $(\Delta(H) +1)/2$. Now let $H$ be an $n$-vertex $K_{\Delta+1}$-free graph with $\Delta(H) \leq \Delta$ as given. It suffices to show that $m_1(H) \leq (\Delta+1)/2 - \eps_{\Delta}$, for  $\eps_{\Delta} = 1/2(\Delta+1)$. Indeed, if this were true, then there exists a constant~$\eps_{\Delta}^*$ such that $n^{-1/m_1(H)} = \bigO\bigl(n^{-(2/(\Delta+1) + \eps_{\Delta}^*)}\,\bigr)$, and so the statement of the lemma follows.  

    To prove the claimed bound on~$m_1(H)$, let~$H'$ be any subgraph of~$H$ and let $S \subseteq V(H)$ be its vertex set. 
    It suffices to show that the $1$-density $d_1(H')$ is less than the afore-claimed bound. 
    If $|S| \leq \Delta$, then $\Delta(H') \leq \Delta-1$, and hence we have that $d_1(H') \leq m_1(H') \leq \Delta/2$, as required. 
    If $|S| = \Delta+1$, then $H'$ can have at most $\binom{\Delta+1}{2} -1$ edges as $H$ is $K_{\Delta+1}$-free, and so we have that $d_1(H') = |E(H')|/\Delta \leq (\Delta+1)/2 - 1/\Delta$. 
    Finally, assume that $|S| > \Delta+1$. Then, as $\Delta(H) \leq \Delta$, the graph~$H'$ has at most $\Delta|S|/2$ edges, and so the claimed bound holds since 
    \[d_1(H') \leq \frac{\Delta|S|}{2(|S|-1)} \leq \frac{\Delta(\Delta+2)}{2(\Delta+1)} = \frac{(\Delta+1)^2 - 1}{2(\Delta+1)} = \frac{(\Delta+1)}2 - \eps_{\Delta}. \qedhere\] 
    \end{proof}

In order to prove Theorem~\ref{thm:tight_improvement}, we begin by partitioning the graph~$H$ into two vertex-disjoint subgraphs~$H_1$ and~$H_2$, where~$H_1$ is defined to be the subgraph formed by all copies of~$K_{\Delta+1}$ in~$H$ and~$H_2$ to be the remaining $K_{\Delta+1}$-free subgraph of~$H$. Note that as $\Delta(H) \leq \Delta$, the graph~$H_1$ is a disjoint union of cliques~$K_{\Delta+1}$. These cliques form connected components in~$H$, and thus are also vertex disjoint from the graph~$H_2$. 

Lemma~\ref{lem:suplementary} shows that any value of $p$ satisfying the conditions of Theorem~\ref{thm:tight_improvement} can be used in conjunction with Theorem~\ref{thm:Robust_main} to find a copy of~$H_2$ in~$G$. On the other hand, we use the following result by Pham, Sah, Sawhney, and Simkin to find a copy of~$H_1$ in~$G$ \mcite[Theorem 1.7 (2)]{toolkit_robust_thresholds}. 

\begin{theorem}
\label{thm:toolkit_robust}
For every $\Delta \in \mathbb{N}$, there exists a constant~$C' > 0$ such that if~$G$ is a graph on $n \in (\Delta+1)\mathbb{N}$ vertices with $\delta(G) \geq \frac{\Delta}{\Delta+1}n$, then for $p \geq C'\, n^{-2/(\Delta+1)} (\log n)^{1/\binom{\Delta+1}{2}}$, with high probability, the random subgraph $G(p)$ contains a $K_{\Delta+1}$-factor.
\end{theorem}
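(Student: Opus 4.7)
The plan is to build a suitably spread measure on the set of $K_{\Delta+1}$-factors of $G$ and then invoke a sharp Kahn--Kalai-type threshold theorem. First, I would apply the degree-form regularity lemma (Theorem~\ref{thm:regularity}, Corollary~\ref{cor:reduced-min-degree}) to $G$ with small parameters $\eps \ll d \ll 1$, obtaining a partition $V_0, V_1, \ldots, V_m$ of $V(G)$ and a reduced graph $R$ with $\delta(R) \geq \bigl(\tfrac{\Delta}{\Delta+1} + o(1)\bigr) m$. By Corollary~\ref{cor:clique_hajnal_szemeredi}, $R$ contains a $K_{\Delta+1}$-factor up to at most $\Delta$ uncovered nodes, which together with the exceptional set $V_0$ can be absorbed into well-chosen parts (as in the proof of Lemma~\ref{lem:G_lemma}); applying Lemmas~\ref{lem:super-regularity} and~\ref{lem:robust_regularity} then yields a spanning $K_{\Delta+1}$-factor $R'$ of a cleaned-up reduced graph whose edges correspond to super-regular pairs in a slightly refined partition $\mathcal{V}$ of $V(G)$.

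Next, within each super-regular $(\Delta+1)$-tuple of parts corresponding to a clique of $R'$, sample a $K_{\Delta+1}$-factor of the induced subgraph uniformly at random from the (very large) family of such factors, and concatenate across tuples to obtain a random $K_{\Delta+1}$-factor $\mathcal{F}$ of $G$. Standard counting in super-regular $(\Delta+1)$-tuples shows that this distribution is $(C/n)$-vertex-spread, which by an adaptation of Theorem~\ref{thm:vtx_spread_implies_spread} translates into a $q$-spread measure on the edge sets of $K_{\Delta+1}$-factors with $q = O(n^{-2/(\Delta+1)})$: fixing any $k$ edges of $G$ consistent with a single factor, the probability they all appear in $\mathcal{F}$ is at most $q^k$, with the bound tight precisely when those $k$ edges form a single $K_{\Delta+1}$.

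At this point, a direct application of Theorem~\ref{thm:FKNP_spread_measure} already delivers the factor threshold $p \gtrsim n^{-2/(\Delta+1)} \log n$, since $\log \ell(\mathcal{F}) = \Theta(\log n)$. To sharpen this to $n^{-2/(\Delta+1)}(\log n)^{1/\binom{\Delta+1}{2}}$ one instead uses the ``fractional/iterated'' refinement of the Kahn--Kalai framework developed by Park--Pham and central to~\cite{toolkit_robust_thresholds}: decompose the edge set of the model $K_{\Delta+1}$ into $\binom{\Delta+1}{2}$ edge-slots and reveal the slots one at a time via \emph{conditional} spread measures, each capturing the distribution of $K_{\Delta+1}$-factors still consistent with the edges revealed so far. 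Each of the $\binom{\Delta+1}{2}$ rounds contributes only a $(\log n)^{1/\binom{\Delta+1}{2}}$ factor to the required edge probability, and these aggregate multiplicatively to give the target threshold.

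The main obstacle is precisely this last step: bridging the gap between the crude $\log n$ produced by a single-pass spread argument and the sharp $(\log n)^{1/\binom{\Delta+1}{2}}$. The first two steps above are essentially routine, reusing the regularity and super-regularity machinery already developed in the paper. What is delicate is constructing the conditional / layered spread measure on partial $K_{\Delta+1}$-factors and controlling how spread is preserved as edges are revealed --- this amounts to transferring the Johansson--Kahn--Vu sharp threshold for $K_{\Delta+1}$-factors in $G(n,p)$ to the dense-host setting, and is the technical heart of the Pham--Sah--Sawhney--Simkin toolkit invoked here as a black box.
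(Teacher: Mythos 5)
The paper never proves this statement: Theorem~\ref{thm:toolkit_robust} is quoted from Pham, Sah, Sawhney, and Simkin (\mcite[Theorem 1.7 (2)]{toolkit_robust_thresholds}) and used as a black box in the proof of Theorem~\ref{thm:tight_improvement}. So your sketch has to stand on its own, and it does not. Your first two steps (regularity, a super-regular $K_{\Delta+1}$-structure, uniformly sampling factors inside super-regular tuples, and converting an $\mathcal{O}(1/n)$-vertex-spread distribution into an $\mathcal{O}(n^{-2/(\Delta+1)})$-spread measure) are plausible, but they only reproduce what the paper already delivers: applying Theorem~\ref{thm:Robust_main} to an $H$ consisting of $n/(\Delta+1)$ disjoint copies of $K_{\Delta+1}$, which has $m_1(H)=(\Delta+1)/2$, gives exactly the bound $p\geq Cn^{-2/(\Delta+1)}\log n$. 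The whole content of Theorem~\ref{thm:toolkit_robust} is the replacement of $\log n$ by $(\log n)^{1/\binom{\Delta+1}{2}}$, and that is precisely the step your proposal does not establish.

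Concretely, a single application of Theorem~\ref{thm:FKNP_spread_measure} cannot produce a sub-logarithmic factor, and your proposed fix --- splitting $K_{\Delta+1}$ into $\binom{\Delta+1}{2}$ edge-slots and revealing them in rounds via ``conditional spread measures,'' with each round costing only $(\log n)^{1/\binom{\Delta+1}{2}}$ --- is asserted rather than argued: you give no construction of the conditional measures, no reason spreadness survives conditioning on the revealed edges, and no mechanism by which FKNP-type applications (each of which inherently carries a full $\log \ell(\mathcal{F})$ factor) would aggregate to the claimed fractional exponent. Worse, you then delegate this ``technical heart'' to the Pham--Sah--Sawhney--Simkin toolkit as a black box, which is circular, since the statement being proved is a theorem of that very paper. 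The genuine route to the sharp threshold is different in kind: it transfers the Johansson--Kahn--Vu theorem~\cite{K_rfactor_threshold} to the dense-host setting by combining spread measures with the random-graph existence result and a coupling/absorption argument, not by iterating a fractional Kahn--Kalai bound. So the missing idea is real, and the last step of your outline would fail as written.
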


Finally, we use a Chernoff bound for hypergeometrically distributed random variables to find vertex-disjoint copies of~$H_1$ and~$H_2$ in~$G$, thereby proving Theorem~\ref{thm:tight_improvement}. Recall that a random variable~$\mathbb{X}$ is said to follow a \emph{hypergeometric distribution} with parameters $(n, m, k)$ if given a set~$\mathcal{U}$ of size~$n$ and a fixed subset~$\mathcal{A} \subseteq \mathcal{U}$ of size~$m$, the random variable~$\mathbb{X}$ counts the size of the intersection~$\mathcal{A} \cap \mathcal{T}$ for a subset $\mathcal{T} \subseteq \mathcal{U}$ of size~$k$ generated uniformly at random among all the $\binom{n}{k}$~such subsets of~$\mathcal{U}$. Note that the expected value~${\rm E}(\mathbb{X})$ of~$\mathbb{X}$ equals~$mk/n$. 
The following concentration bound holds for such a random variable~$\mathbb{X}$~\cite[Chapter 2]{Random_graphs_textbook_janson}.

\begin{lemma}
[Hypergeometric Chernoff Bound]
\label{lem:chern_hypgeom}
Let $\mathbb{X}$ be a hypergeometrically distributed random variable with parameters $(n, m, k)$. Then for any $\eps \in (0,1)$ and for $\eps\,{\rm E}(\mathbb{X})\leq t \leq {\rm E}(\mathbb{X})$, we have
$$\mathds{P}\Bigl(\bigl|\mathbb{X} - {\rm E}(\mathbb{X})\bigr| \geq t \Bigr) \leq 2e^{-\eps^2t/3}\,.$$
\end{lemma}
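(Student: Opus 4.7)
The plan is to prove this by reducing to the classical Chernoff bound for the binomial distribution via Hoeffding's comparison inequality, and then rearranging the parameters into the stated form.

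First I would introduce an auxiliary binomial random variable $\mathbb{Y} \sim \mathrm{Bin}(k, m/n)$, which has the same mean $\mu := {\rm E}(\mathbb{X}) = mk/n$ as $\mathbb{X}$. The key reduction is Hoeffding's classical comparison theorem: for every convex function $\phi : \mathbb{R} \to \mathbb{R}$,
\[
{\rm E}[\phi(\mathbb{X})] \;\leq\; {\rm E}[\phi(\mathbb{Y})].
\]
Informally, sampling without replacement concentrates more tightly around the mean than sampling with replacement; the proof is a short rearrangement argument exploiting exchangeability of the samples. Applied with $\phi(x) = e^{\lambda x}$ for $\lambda \in \mathbb{R}$, this gives ${\rm E}[e^{\lambda \mathbb{X}}] \leq {\rm E}[e^{\lambda \mathbb{Y}}]$, so any Chernoff-style bound obtained from the moment generating function of $\mathbb{Y}$ transfers automatically to $\mathbb{X}$.

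Next I would invoke the standard binomial Chernoff bound: for $\delta \in (0, 1]$,
\[
\mathds{P}\bigl(|\mathbb{Y} - \mu| \geq \delta \mu\bigr) \;\leq\; 2 e^{-\delta^2 \mu / 3},
\]
which follows by applying Markov's inequality to $e^{\lambda(\mathbb{Y}-\mu)}$ (optimising $\lambda$ separately for the upper and lower tails) together with the elementary estimate ${\rm E}[e^{\lambda \mathbb{Y}}] \leq \exp(\mu(e^{\lambda}-1))$. Combining this with the Hoeffding comparison above and setting $\delta = t/\mu \in (0, 1]$ yields
\[
\mathds{P}\bigl(|\mathbb{X} - \mu| \geq t\bigr) \;\leq\; 2 e^{-t^2/(3 \mu)} \quad \text{for } t \in (0, \mu].
\]

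Finally, to recover the precise form stated in the lemma, I would use the hypotheses $\eps \in (0,1)$ and $\eps \mu \leq t$: since $\eps \leq 1$, we have $\eps^2 \mu \leq \eps \mu \leq t$, which rearranges to $\eps^2 t / 3 \leq t^2 / (3\mu)$, so $e^{-\eps^2 t/3} \geq e^{-t^2/(3\mu)}$, and the claimed bound follows from the previous display. The only genuinely nontrivial step is Hoeffding's comparison theorem (the main obstacle, handled by citing the standard proof in the Janson--{\L}uczak--Ruci\'nski textbook referenced in the statement); the remaining manipulations are routine applications of the Chernoff method.
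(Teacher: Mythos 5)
Your argument is correct, and it is essentially the standard derivation that the paper relies on: the paper does not prove this lemma itself but cites Chapter~2 of the Janson--{\L}uczak--Ruci\'nski book, where the hypergeometric case is handled exactly by Hoeffding's convex-order comparison with the binomial $\mathrm{Bin}(k,m/n)$ followed by the usual Chernoff bounds. Your final rearrangement is also sound: $t\le\mu$ justifies $\delta=t/\mu\in(0,1]$, and $\eps\,\mu\le t$ with $\eps<1$ gives $\eps^2\mu\le t$, hence $e^{-t^2/(3\mu)}\le e^{-\eps^2 t/3}$ as required.
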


We are now ready to prove Theorem~\ref{thm:tight_improvement}.

\begin{proof}[Proof of Theorem~\ref{thm:tight_improvement}]
    
    Let the constants $\Delta$ and $\gamma$ be given. With input $\Delta$ and $\gamma/2$, Theorem~\ref{thm:Robust_main} returns a constant~$C$, and with input $\Delta$, Theorem~\ref{thm:toolkit_robust} returns a constant~$C'$, for which the respective theorems hold. Let $\xi = \gamma/2$ and define the constant $C^* = \xi^{-1} \max \{C, C'\}$. Finally, let the graphs~$G$ and~$H$ be as given in the statement of Theorem~\ref{thm:tight_improvement}.

    Given the graph~$H$, let~$H_1$ be the subgraph of~$H$ formed by all the~$K_{\Delta+1}$ components in~$H$, and let $H_2 = H \setminus H_1$ be the remaining subgraph. Note that~$H_1$ and~$H_2$ are vertex disjoint subgraphs of~$H$. We will individually embed~$H_1$ and~$H_2$ into $G(p)$, for any given $p \geq C^*\, n^{-2/(\Delta+1)}(\log n)^{1/\binom{\Delta+1}{2}}$. Let $H_2'$ be the $n$-vertex graph obtained from $H_2$ by adding $|H_1|$ isolated vertices. Note that $m_1(H_2) = m_1(H_2')$, and hence as $H'_2$ is $K_{\Delta+1}$-free and $\Delta(H'_2) \leq \Delta$, Lemma~\ref{lem:suplementary} implies that $p \geq C^* n^{-1/m_1(H_2)} \log n$ for sufficiently large~$n$. 

    We first consider the case where $|V(H_1)| \leq \xi n$. Then we embed the graph~$H$ as follows. The graph~$G$ has a minimum degree~$\delta(G) \geq (\exth + \gamma)n$. By deleting at most $\Delta$ vertices, for $n > \Delta/\gamma$ one obtains an induced subgraph~$G'$ of~$G$ having minimum degree $\delta(G') \geq \exth n \geq \frac{\Delta}{\Delta+1}n$. Thus, Theorem~\ref{thm:toolkit_robust} returns a $K_{\Delta+1}$-factor in $G'(p)$. This gives a copy of~$H_1$ in~$G(p)$. Let this copy of~$H_1$ have a vertex set~$V_1$ in~$G(p)$. Now consider the induced subgraph $G_2 \coloneqq G[V(G) \setminus V_1]$ with $|G_2| = m$. Then, for any vertex $v \in V(G_2)$, we have that 
    $$d_{G_2}(v) \geq (\exth + \gamma)n - \xi n \geq (\exth + \gamma/2) n \geq (\exth + \gamma/2) m.$$ 
    Moreover, as $m \geq (1-\xi)n \geq \xi n $ and as $C^* \geq \xi^{-1}C$, we have
    \[p \geq C^* n^{-1/m_1(H_2)} \log n \geq C(\xi n)^{-1/m_1(H_2)}\log n \geq Cm^{-1/m_1(H_2)}\log m.\]
    
    Thus, by Theorem~\ref{thm:Robust_main}, the random subgraph $G(p)\bigl[V(G_2)\bigr] \sim G_2(p)$ contains a spanning copy of~$H_2$ on the vertex set~$V(G)\setminus V_1$. This, together with the copy of~$H_1$ in $G(p)[V_1]$, gives a copy of $H$ in $G(p)$, as required. Similarly, if $|V(H_2)| \leq \xi n$, then we can find a copy of $H$ in $G(p)$ by adding isolated vertices to $H_2$ and reversing the order of application of the theorems above. 

    Hence, we may assume that $|V(H_1)| > \xi n$ and $|V(H_2)| > \xi n$. Suppose that $|V(H_1)| = k$, and let $\mathcal{U} \subseteq V(G)$ be a set of $k$ vertices chosen uniformly at random without replacement. We shall say that a vertex~$v \in \mathcal{U}$ is \emph{bad}, if $\bigl|N(v; \mathcal{U})\bigr| < (\exth + \gamma/2)k$. For any vertex $v \in \mathcal{U}$, let~$X_v$ denote the size of its neighbourhood in~$\mathcal{U}$. Then,~$X_v$ is a hypergeometric random variable with parameters $(n, d_G(v), k)$ and expected value $\mathbb{E}\bigl(X_v) = d_G(v)k/n \geq (\exth + \gamma)k$. Thus, by an application of Lemma~\ref{lem:chern_hypgeom} with $t = \gamma k/2$ and $\eps = \gamma/2$, for any vertex $v \in \mathcal{U}$, we have that 
    $$\mathbb{P}(\text{$v$ is bad}) \leq \mathbb{P}\bigl(|X_v - \mathbb{E}(X_v)| \geq \gamma k/2 \bigr) \leq 2e^{-\gamma^3 k/24} \leq 2e^{-\gamma^3 \xi n/24}.$$
    Hence, with probability at least $1 - 2k\,e^{-\gamma^3 \xi n/24}$, the induced subgraph $G[\,\mathcal{U}\,]$ has minimum degree $\delta\bigl(G[\,\mathcal{U}\,]\bigr) \geq (\exth + \gamma/2)k$. Moreover, since $k > \xi n$ and since $C^* \geq \xi^{-1}C'$, it follows that $p  \geq C'k^{-2/(\Delta+1)}(\log k)^{1/\binom{\Delta+1}{2}}.$
    Thus by Theorem~\ref{thm:toolkit_robust}, with high probability the graph $G[\,\mathcal{U}\,](p)$ contains a $K_{\Delta+1}$-factor, and thereby a copy of~$H_1$ as well. 
    
    Similarly for the graph~$H_2$ and the complement set of vertices~$\mathcal{U}^c \subseteq V(H)$, repeating the above argument, \emph{mutatis mutandis}, it follows that with probability $1 - 2(n-k)e^{-\gamma^3 \xi n/24}$, the random subgraph $G[\,\mathcal{U}^c](p)$ contains a copy of~$H_2$ with high probability. Thus, with probability at least $1 - 2n\,e^{-\gamma^3 \xi n/24}$, Theorems \ref{thm:toolkit_robust} and \ref{thm:Robust_main} are applicable to the induced subgraphs~$G[\,\mathcal{U}\,]$ and~$G[\,\mathcal{U}^c]$ respectively, thereby returning a copy of~$H$ in~$G(p)$ with high probability, as was required.  
\end{proof}

\bibliographystyle{ams_edited}
\bibliography{Robust_Sauer_Spencer}
\end{document}